\newtheorem{thm}{{{Theorem}}}[section]
\newtheorem{prop}[thm]{{Proposition}}
\newtheorem{lem}[thm]{{Lemma}}
\newtheorem{cor}[thm]{{Corollary}}
\newtheorem{exa}[thm]{{Example}}
\newtheorem{remark}[thm]{Remark}
\newtheorem{conj}[thm]{Conjecture}
\numberwithin{equation}{section}
\newtheorem{Def}[equation]{Definition}
\newcommand{\F}{\mathbb{F}}
\newcommand{\Gonetwo}{\mathcal{G}^{SS}_{1}(2,p)}
\renewcommand{\a}{\alpha}
\newcommand{\g}{\gamma}
\newcommand{\G}{\Gamma}
\renewcommand{\d}{\delta}
\newcommand{\D}{\Delta}
\renewcommand{\k}{\kappa}
\renewcommand{\l}{\lambda}
\renewcommand{\L}{\Lambda}
\newcommand{\m}{\mu}
\newcommand{\n}{\nu}
\renewcommand{\r}{\rho}
\newcommand{\s}{\sigma}
\renewcommand{\SS}{\Sigma}
\newcommand{\f}{\varphi}
\newcommand{\x}{\xi}
\newcommand{\z}{\zeta}
\newcommand{\p}{\psi}
\renewcommand{\P}{\Psi}
\newcommand{\Ac}{{\mathcal A}}
\newcommand{\Bc}{{\mathcal B}}
\newcommand{\Cc}{{\mathcal C}}
\newcommand{\Gc}{{\mathcal G}}
\newcommand{\Ic}{{\mathcal I}}
\newcommand{\Lc}{{\mathbb L}}
\newcommand{\Nc}{{\mathcal N}}
\newcommand{\Oc}{{\it O}}
\newcommand{\Hc}{{\mathcal H}}
\newcommand{\Sc}{{\mathcal S}}
\newcommand{\C}{{\mathbb C}}
\newcommand{\R}{{\mathbb R}}
\newcommand{\Z}{{\mathbb Z}}
\newcommand{\Q}{{\mathbb Q}}
\newcommand{\Fb}{{\mathbb F}}
\def\({\left(}
\def\){\right)}
\def\lbr{\langle}
\def\rbr{\rangle}
\def\l\{{\left\{}
\def\r\}{\right\}}
\def\wt{\widetilde}
\def\wh{\widehat}
\def\wbar{\overline}
\def\id{{\rm id}}
\def\gfr{\mathfrak{g}}
\def\sl{\mathfrak{sl}}
\def\kfr{\mathfrak{k}}
\def\pfr{\mathfrak{p}}
\def\afr{\mathfrak{a}}
\def\tr{{\rm tr}}
\def\ad{{\rm ad}}
\def\Ad{{\rm Ad}}
\def\Ker{{\rm Ker\,}}
\def\Sfr{\mathfrak{S}}
\def\FD{{\rm FD}}
\def\Ran{{\rm R}}
\def\Cay{{\rm Cay}}
\def\Leb{{\rm Leb}}
\def\ev{{\rm ev\,}}
\def\diam{{\rm diam\,}}
\def\Pb{{\bf P}}
\def\Eb{{\bf E}\,}
\def\Zc{{\mathcal Z}}
\def\1{{\bf 1}}
\def\mix{{\rm mix}}
\def\TV{{\rm TV}}
\def\Var{{\rm Var}}
\def\gr{{\rm gr}}
\def\bi-form{\langle \ast, \ast\rangle}
\def\tr^{\,^t}
\def\lab{{\bf lab}}
\def\ord{{\rm ord}}
\def\Pf{{\rm Pf}}
\def\Ver{{\rm Ver}}
\def\diag{{\rm diag}}
\def\Unif{{\rm Unif}}
\newcommand{\bQ}{\overline{\mathbb{Q}}}
\newcommand{\bQp}{\overline{\mathbb{Q}_p}}
\newcommand{\bZ}{\overline{\mathbb{Z}}}
\newcommand{\bZp}{\overline{\mathbb{Z}}_p}
\newcommand{\bF}{\overline{\mathbb{F}}}
\newcommand{\bFl}{\overline{\mathbb{F}}_\ell}
\newcommand{\bQl}{\overline{\mathbb{Q}_\ell}}
\newcommand{\Qp}{\mathbb{Q}_p}
\newcommand{\Ql}{\mathbb{Q}_\ell}
\newcommand{\Zp}{\mathbb{Z}_p}
\newcommand{\Zl}{\mathbb{Z}_\ell}
\newcommand{\oF}{\overline{F}}
\newcommand{\lra}{\longrightarrow}
\newcommand{\lla}{\longleftarrow}
\newcommand{\K}{\mathbb{K}}
\newcommand{\vp}{\varphi}
\newcommand{\A}{\mathbb{A}}
\newcommand{\mA}{\mathcal{A}}
\newcommand{\mL}{\mathcal{L}}
\newcommand{\mM}{\mathcal{M}}
\newcommand{\wA}{\widehat{A}}
\renewcommand{\O}{\mathcal{O}}
\renewcommand{\P}{\frak{P}}
\newcommand{\br}{\overline{\rho}}
\newcommand{\la}{\lambda}
\newcommand{\w}{\omega}
\newcommand{\brho}{\overline{\rho}}
\newcommand{\tG}{\widetilde{G}}
\newcommand{\wf}{\widetilde{f}}
\newcommand{\ds}{\displaystyle}
\newcommand{\bs}{\backslash}
\newcommand{\td}{\widetilde{\delta}}
\begin{document}

\title[Isogeny graphs on superspecial abelian varieties]
      {Isogeny graphs on superspecial abelian varieties: Eigenvalues and Connection to Bruhat-Tits buildings}

\author{Yusuke Aikawa}
\address{Yusuke Aikawa \\
Information Technology R\&D Center, Mitsubishi Electric Corporation JAPAN}
\email{Aikawa.Yusuke@bc.MitsubishiElectric.co.jp}

\author{Ryokichi Tanaka}
\address{Ryokichi Tanaka \\
Department of Mathematics,
Kyoto University, Kyoto 606-8502 JAPAN}
\email{rtanaka@math.kyoto-u.ac.jp}

\author{Takuya Yamauchi}
\address{Takuya Yamauchi \\
Mathematical Institute Tohoku University, Sendai 980-8578 JAPAN}
\email{takuya.yamauchi.c3@tohoku.ac.jp}

\keywords{Isogeny graphs, cryptographic hash functions, automorphic forms}
\subjclass[2010]{}

\date{\today}

\maketitle

\begin{abstract}
We study for each fixed integer $g \ge 2$, for all primes $\ell$ and $p$ with $\ell \neq p$,
finite regular directed graphs associated with 
the set of equivalence classes of $\ell$-marked principally polarized superspecial abelian varieties of dimension $g$ in 
characteristic $p$, and show that the adjacency matrices have real eigenvalues with spectral gaps independent of $p$.
This implies a rapid mixing property of natural random walks on the family of isogeny graphs beyond the elliptic curve case and suggests
a potential construction of     
the Charles-Goren-Lauter type cryptographic hash functions for abelian varieties. 
We give explicit lower bounds for the gaps in terms of 
the Kazhdan constant for the symplectic group when $g \ge 2$,
and discuss optimal values in view of the theory of 
automorphic representations when $g=2$. 
As a by-product, we also show that the finite regular directed graphs constructed by 
Jordan-Zaytman also has the same property. 
\end{abstract}

\tableofcontents 

\section{Introduction}\label{intro}
Isogeny graphs are finite graphs associated with elliptic curves, more generally, abelian varieties over finite fields.
They have attracted attention not only in arithmetic geometry but also in cryptography since the objects consist a building block in a prospective secure encryption scheme.
It is believed that finding a path between an arbitrary pair of points is highly intractable in those graphs whereas a relatively short random walk path ends up with a fairly randomized vertex.
In this paper, we study a random walk, thus mainly concerning the latter, on the isogeny graphs based on principally polarized superspecial abelian varieties over $\wbar \Fb_p$ of dimension $g$ at least $2$ formed by $(\ell)^g$-isogenies with $p \neq \ell$ for primes $p$ and $\ell$.
This is one of natural generalizations beyond the supersingular elliptic curves, the case corresponding to dimension $1$.

\subsection{Main Theorems}

To go into further explanation we need to fix some notation and 
the details are left to the relevant sections.
Let $p$ be a prime and $g$ be a positive integer. 
Fix an algebraically closed field $\bF_p$ of the finite field $\F_p=\Z/p\Z$. 
Let $SS_g(p)$ be the set of isomorphism classes of 
all principally polarized superspecial abelian varieties over $\bF_p$ which are of 
dimension $g$. We denote such an abelian variety $A$ endowed with the 
principal polarization $\mL$ which is an ample line bundle $\mL$ on $A$ with 
the trivial Euler-Poincar\'e characteristic. 
For a principally polarized superspecial abelian variety $(A,\mL)$ 
we write $[(A,\mL)]$ for the class of $(A,\mL)$ in $SS_g(p)$. 
It is known that $SS_g(p)$ is a finite set and 
more precisely that  
\[
C_1(g) p^{g(g+1)/2}\le |SS_g(p)|\le C_2(g) p^{g(g+1)/2}
\]
for all large enough $p$ and for some positive constants $C_1(g)$ and $C_2(g)$ depending only on $g$ (it follows from the mass formula (1.2) in p.1419 of \cite{Yu}). 

Fix a representative $(A_0,\mL_0)$ in a class of $SS_g(p)$ and 
a prime $\ell\neq p$.  
For each $(A,\mL)$ in a class of $SS_g(p)$, there exists an 
isogeny $\phi_A:A_0\lra A$ of $\ell$-power degree such that 
${\rm Ker}(\phi_A)$ is a maximal totally isotropic subspace of 
$A[\ell^n]$ for some $n\ge 0$ 
(it follows from Theorem \ref{JZ} in Section \ref{AFJZ} in this paper and Theorem 34 of \cite{JZ}). 
We call $\phi_A$ an $\ell$-marking of $(A,\mL)$ from $(A_0,\mL_0)$. 
Two $\ell$-markings of $(A,\mL)$ from $(A_0,\mL_0)$ differ by only an 
element in 
$$\G(A_0)^\dagger:=\{f\in ({\rm End}(A_0)\otimes_\Z \Z[1/\ell])^\times\ |\ f\circ f^\dagger=
f^\dagger\circ f\in \Z[1/\ell]^\times {\rm id}_{A_0}\}$$
where $\dagger$ stands for the Rosati involution associated to $\mL_0$ 
(see Proposition \ref{diff-markings}). 

We define 
\begin{equation}\label{pla0}
SS_g(p,\ell,A_0,\mL_0):=\{(A,\mL,\phi_A)\}/\sim 
\end{equation}
where $[(A,\mL)]\in SS_g(p)$ and $\phi_A$ is an 
$\ell$-marking from $(A_0,\mL_0)$.  
Here two objects $(A_1,\mL_1,\phi_{A_1})$ and $(A_2,\mL_2,\phi_{A_2})$ are
said to be equivalent if there exists an isomorphism $f:(A_1,\mL_1)\lra (A_2,\mL_2)$ such that 
$f\circ \phi_{A_1}$ and $\phi_{A_2}$ differ by only an element of 
$\G(A_0)^\dagger$ in which case we write 
$(A_1,\mL_1,\phi_{A_1})\sim(A_2,\mL_2,\phi_{A_2})$. 
We write $[(A,\mL,\phi_{A})]\in SS_g(p,\ell,A_0,\mL_0)$ for the class of 
$(A,\mL,\phi_A)$ where $(A,\mL)$ is a principally polarized superspecial abelian variety with an $\ell$-marking $\phi_A$ from $(A_0,\mL_0)$.  
Let $C$ be a maximal totally isotropic subgroup (or a Lagragian subspace in 
other words) of $A[\ell]$. Then the quotient 
$A_C=A/C$ yields an object, say $(A_C,\mL_C)$ in a class in $SS_g(p)$ 
 and the natural surjection $f_C:A\lra A_C$ is called an $(\ell)^g$-isogeny 
 (see Proposition \ref{uni-1} and Definition \ref{ellg-iso}). 
Any $(\ell)^g$-isogeny between two objects in $SS_g(p)$ arises in this way. 
We remark that the number of maximal totally isotropic subgroups $A[\ell]$ is 
$$N_g(\ell):=\ds\prod_{k=1}^g(\ell^k+1)$$
for each $A$.  

We are now ready to define the ($\ell$-marked) $(\ell)^g$-isogeny graph $\Gc^{SS}_{g}(\ell,p)$ for 
$SS_g(p,\ell,A_0,\mL_0)$ is defined as a directed graph where
\begin{itemize}
\item the set of vertices $V(\Gc^{SS}_{g}(\ell,p))$ is $SS_g(p,\ell,A_0,\mL_0)$ and
\item the set of directed edges between two vertices $v_1$ and $v_2$ is the set of equivalence classes of  $(\ell)^g$-isogenies between corresponding principally polarized superspecial abelian varieties commuting with marking isogenies representing 
$v_1$ and $v_2$. In other words, if $v_1$  and $v_2$ correspond to 
$[(A_1,\mL_1,\phi_{A_1})]$ and  $[(A_2,\mL_2,\phi_{A_2})]$ with $\ell$-markings 
$\phi_{A_1}:(A_0,\mL_0)\lra (A_1,\mL_1)$ and 
$\phi_{A_2}:(A_0,\mL_0)\lra (A_2,\mL_2)$ respectively, then 
an edge from $v_1$ to $v_2$ is an $(\ell)^g$-isogeny $f:(A_1,\mL_1)\lra (A_2,\mL_2)$ such that 
two markings $f\circ \phi_{A_1}$ and $\phi_{A_2}$ of $(A_2,\mL_2)$ from $(A_0,\mL_0)$ 
differ by an element of $\G(A_0)^\dagger$. 
\end{itemize}
Our graph is regular since it has $N_g(\ell)$-outgoing edges from each vertex, possibly loops and multiple edges from one to another.
The associated random walk operator for $\Gc^{SS}_{g}(\ell,p)$ is self-adjoint with respect to a weighted inner product by the inverse of the order of the reduced automorphism group (see Section \ref{Sec:rwop}).
Our first main result is the following:
\begin{thm}\label{main}Let $p$ be a prime. 
For each fixed integer $g \ge 2$ and for each fixed prime $\ell\neq p$,
the finite $N_g(\ell)$-regular directed multigraph $\Gc^{SS}_{g}(\ell,p)$ has the second largest eigenvalue of the normalized Laplacian satisfying that
\[
\lambda_2(\Gc^{SS}_{g}(\ell,p)) \ge c_{g, \ell}>0,
\]
where $c_{g, \ell}$ is a positive constant depending only on $g$ and $\ell$.
\end{thm}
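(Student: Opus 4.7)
The constant $c_{g,\ell}$ must be independent of $p$, which forces the argument to take place on a structure in which $p$ does not appear---the Bruhat--Tits building of $\mathrm{Sp}_{2g}$ over $\Q_\ell$---and to invoke a rigidity property of $\mathrm{Sp}_{2g}(\Q_\ell)$ that holds precisely when $g\ge 2$: Kazhdan's property (T). My first step is to use Theorem \ref{JZ} of Jordan--Zaytman together with the definition of $\ell$-marking to establish a bijection
\[
SS_g(p,\ell,A_0,\mL_0)\ \longleftrightarrow\ \G(A_0)^\dagger\,\bs\, \mathrm{Sp}_{2g}(\Q_\ell)/K_\ell,
\]
where $K_\ell=\mathrm{Sp}_{2g}(\Z_\ell)$ stabilizes the standard self-dual lattice in the rational Tate module $V_\ell(A_0)$ (equipped with the Weil pairing attached to $\mL_0$), and $\G(A_0)^\dagger$ is embedded in $\mathrm{Sp}_{2g}(\Q_\ell)$ through its natural action on $V_\ell(A_0)$. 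A Lagrangian subgroup of $A[\ell]$, and hence an $(\ell)^g$-isogeny, corresponds under this dictionary to a neighbor in the building, so the edge set of $\Gc^{SS}_g(\ell,p)$ is encoded by the single Hecke double coset $K_\ell\alpha K_\ell$ with $\alpha=\mathrm{diag}(\ell I_g, I_g)\in\mathrm{Sp}_{2g}(\Q_\ell)$. With the weighting by $1/|\mathrm{Aut}|$ discussed in Section \ref{Sec:rwop}, the random walk operator becomes the Hecke operator $T_\alpha$ acting self-adjointly on
\[
\Hc:=L^2\!\left(\G(A_0)^\dagger\bs \mathrm{Sp}_{2g}(\Q_\ell)/K_\ell\right),
\]
with top eigenvalue $N_g(\ell)$ realized on the constants; the claim becomes the uniform operator-norm bound $\|T_\alpha|_{\Hc_0}\|\le (1-c_{g,\ell})\,N_g(\ell)$ on the orthogonal complement $\Hc_0$ of the constants.

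For the spectral step, I would use that $\mathrm{Sp}_{2g}(\Q_\ell)$ has Kazhdan's property (T) (since $g\ge 2$), furnishing a compact Kazhdan set $Q\subset\mathrm{Sp}_{2g}(\Q_\ell)$ and a constant $\kappa>0$ that depend only on $g$ and $\ell$. The space $\Hc_0$ embeds into the $K_\ell$-invariant part of a unitary representation of $\mathrm{Sp}_{2g}(\Q_\ell)$ without non-zero invariant vectors---namely, $L^2(\G(A_0)^\dagger\bs \mathrm{Sp}_{2g}(\Q_\ell))$ modulo its constants---hence contains no $(Q,\kappa)$-almost invariant unit vector. A standard convolution comparison between the normalized probability supported on $K_\ell\alpha K_\ell$ and a probability supported on $Q$ then produces an explicit strict contraction of $T_\alpha/N_g(\ell)$ on $\Hc_0$, with contraction constant $c_{g,\ell}>0$ depending only on $\kappa$ and on the combinatorics of $K_\ell\alpha K_\ell$, hence only on $g$ and $\ell$.

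The main obstacle is the first step, where three compatibilities must be verified: that the equivalence $\sim$ in \eqref{pla0} matches the left $\G(A_0)^\dagger$-action on the double coset space; that the edge count of $\Gc^{SS}_g(\ell,p)$, including loops and multiplicities, corresponds exactly to the Hecke double coset $K_\ell\alpha K_\ell$; and that $T_\alpha$ is self-adjoint for the $|\mathrm{Aut}|^{-1}$ weight on the left-hand side. Once this dictionary is secured, property (T) does the spectral work; an explicit value of $c_{g,\ell}$ and the sharper automorphic refinement promised for $g=2$ then come, respectively, from quantifying $\kappa$ and from the Ramanujan--Petersson theory for $\mathrm{GSp}_4$.
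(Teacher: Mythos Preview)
Your strategy is the paper's strategy: pass from $SS_g(p,\ell,A_0,\mL_0)$ to a double-coset description via Theorem \ref{JZ}, identify the $(\ell)^g$-isogeny adjacency with a single Hecke double coset, and extract a uniform spectral gap from Kazhdan's property (T) for the ambient $\ell$-adic symplectic group. The paper carries this out exactly, building a specific probability measure $\mu$ on the group and bounding $\langle (I-\Ac_{\G,\mu})\f,\f\rangle$ below by $\frac{1}{4(g+2)}\kappa(G,\Omega)^2$ (Proposition \ref{Prop:kappa} and Theorem \ref{thm:gap}).

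There is, however, a consistent technical slip in your setup that you must repair before the argument runs. You place everything inside $\mathrm{Sp}_{2g}(\Q_\ell)$, but none of the three key objects lives there:
\begin{itemize}
\item The element $\alpha=\mathrm{diag}(\ell I_g,I_g)$ has similitude $\ell$, so $\alpha\in GSp_g(\Q_\ell)\setminus Sp_g(\Q_\ell)$; the double coset $K_\ell\alpha K_\ell$ is not a subset of $Sp_g(\Q_\ell)$.
\item The lattice $\G(A_0)^\dagger$ is defined by the condition $f\circ f^\dagger\in\Z[1/\ell]^\times\cdot\mathrm{id}$, i.e.\ similitude in $\Z[1/\ell]^\times$; it embeds in $GSp_g(\Q_\ell)$, not in $Sp_g(\Q_\ell)$, so the quotient $\G(A_0)^\dagger\backslash \mathrm{Sp}_{2g}(\Q_\ell)/K_\ell$ as written is not well-posed.
\item If you enlarge to $GSp_g(\Q_\ell)$ to accommodate $\alpha$ and $\G(A_0)^\dagger$, you lose property (T): $GSp_g(\Q_\ell)$ surjects onto $\Z$ via $\mathrm{ord}_\ell\circ\nu$ and hence does \emph{not} have (T).
\end{itemize}
The paper resolves all three points simultaneously by working in $PGSp_g(\Q_\ell)$: the bijection of Theorem \ref{JZ} lands in $G_g(\Z[1/\ell])\backslash GSp_g(\Q_\ell)/Z_{GSp_g}(\Q_\ell)GSp_g(\Z_\ell)$, the Hecke element $t_\ell$ descends to $PGSp_g$, and $PGSp_g(\Q_\ell)$ inherits property (T) from $Sp_g(\Q_\ell)$ for $g\ge 2$ (Section \ref{Sec:PropertyT}). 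Once you make this correction, your outline coincides with the paper's proof; your ``standard convolution comparison'' is made precise there as Lemma \ref{Lem:Hpi} and Proposition \ref{Prop:kappa}, and the explicit constant you anticipate is exactly Corollary \ref{Cor:Oh}, obtained by feeding Oh's bounds for $\kappa(Sp_g(\Q_\ell),\cdot)$ into the inequality.
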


We defined the normalized Laplacian $\D$ on a regular directed multigraph $\Gc$ of degree $d$ by $\D=1-(1/d)M$ for the adjacency matrix $M$ of $\Gc$.
Note that
$\D$ has the simple smallest eigenvalue $0$ provided that the graph is strongly connected, i.e., there exists a directed edge path from any vertex to any other vertex.
In Theorem \ref{main}, we actually have an explicit lower bound for $\lambda_2$:
For every integer $g \ge 2$, for all primes $\ell$ and $p$ with $p \neq \ell$,
\[
\lambda_2\(\Gc^{SS}_{g}(\ell,p)\) \ge \frac{1}{4(g+2)}\(\frac{\ell-1}{2(\ell-1)+3\sqrt{2\ell (\ell+1)}}\)^2,
\]
(Corollary \ref{Cor:Oh} in Section \ref{Sec:Oh}).
In the course of the proof of Theorem \ref{main}, we relate $\Gc^{SS}_{g}(\ell,p)$ 
to a finite quotient $\G\bs\mathcal{S}_g$ (see Subsection \ref{btq1}) of the special 1-complex $\mathcal{S}_g$ defined in terms of the Bruhat-Tits building for $PGSp_g(\Q_\ell)$ 
(see Theorem \ref{JZ} and Section \ref{Sec:special}). 
We then move on  $\mathcal{S}_g$ to prove the desired property by using 
Kazhdan's Property (T) of $PGSp_g(\Q_\ell)$ for $g \ge 2$. 

In \cite{JZ}, Jordan and Zaytman introduced a {\it big isogeny graph} $Gr_g(\ell, p)$ based on $SS_g(p)$. 
We will show in Section \ref{SSAV} and Section \ref{Comparison} that 
there exist natural identifications 
$$SS_g(p)\stackrel{1:1}{\lla}SS_g(p,\ell,A_0,\mL_0)\stackrel{1:1}{\lra}\Gamma\backslash\mathcal{S}_g$$
which induce natural isomorphisms as graphs between three objects:
\begin{enumerate}
\item $Gr_g(\ell, p)$,
\item $\Gc^{SS}_{g}(\ell,p)$, and 
\item the regular directed graph defined by $\Gamma\backslash\mathcal{S}_g$.
\end{enumerate}
It follows from this that the adjacency matrices of the above three graphs 
agree with each other. 
Therefore, the structure of Jordan-Zaytman's graph $Gr_g(\ell, p)$ is revealed by our main theorem:
\begin{thm}\label{mainJZ}
Let $p$ be a prime. 
For each fixed integer $g \ge 2$ and for each fixed prime $\ell\neq p$,
a finite $N_g(\ell)$-regular directed multigraph $Gr_g(\ell, p)$ has the same 
property as in Theorem \ref{main}. 
\end{thm}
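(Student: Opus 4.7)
The plan is to reduce Theorem \ref{mainJZ} to Theorem \ref{main} by establishing, as claimed in the discussion preceding the theorem, that $Gr_g(\ell,p)$ and $\Gc^{SS}_{g}(\ell,p)$ are isomorphic as $N_g(\ell)$-regular directed multigraphs. Once this is done, their adjacency matrices differ only by a relabeling of vertices, so they have identical spectra, and the lower bound $c_{g,\ell}$ on $\lambda_2$ of the normalized Laplacian transfers verbatim from Theorem \ref{main}.

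First I would construct the vertex bijection $SS_g(p,\ell,A_0,\mL_0) \to SS_g(p)$ by forgetting the marking. Surjectivity is the existence of some $\ell$-marking $\phi_A$ for every $[(A,\mL)] \in SS_g(p)$, which is Theorem \ref{JZ} of the present paper (equivalently Theorem 34 of \cite{JZ}). Injectivity is precisely Proposition \ref{diff-markings}: any two $\ell$-markings of $(A,\mL)$ from $(A_0,\mL_0)$ differ by an element of $\G(A_0)^\dagger$, which is exactly the equivalence relation built into $SS_g(p,\ell,A_0,\mL_0)$. Next I would compare the edges. Fixing representatives $(A_i,\mL_i,\phi_{A_i})$ at each vertex, I would associate to each $(\ell)^g$-isogeny $f:(A_1,\mL_1)\to (A_2,\mL_2)$ in $Gr_g(\ell,p)$ the edge of $\Gc^{SS}_{g}(\ell,p)$ determined by the fact that $f\circ \phi_{A_1}$ is itself an $\ell$-marking of $(A_2,\mL_2)$ and hence differs from $\phi_{A_2}$ by a unique element of $\G(A_0)^\dagger$. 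That this map is a bijection on directed edges between corresponding vertices can be checked by counting: both out-degrees equal $N_g(\ell)$, the number of maximal totally isotropic subgroups of $A[\ell]$, by Proposition \ref{uni-1} together with the remark above.

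The main obstacle is the careful bookkeeping of the two edge equivalence relations: the Jordan--Zaytman one identifies $(\ell)^g$-isogenies that differ by automorphisms of the target polarized abelian variety, whereas the one defining $\Gc^{SS}_{g}(\ell,p)$ identifies edges only up to the compatibility condition $f\circ \phi_{A_1} = \phi_{A_2} \bmod \G(A_0)^\dagger$. Reconciling these requires showing that an automorphism of $(A_2,\mL_2)$ modifies $\phi_{A_2}$ by an element of $\G(A_0)^\dagger$ and conversely, which is again the content of Proposition \ref{diff-markings} and is to be spelled out in Section \ref{Comparison}. Once the graph isomorphism is in place, Theorem \ref{main} together with the explicit constant from Corollary \ref{Cor:Oh} immediately yields Theorem \ref{mainJZ}.
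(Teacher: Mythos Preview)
Your proposal is correct and follows essentially the same route as the paper: establish the graph isomorphism $Gr_g(\ell,p)\cong \Gc^{SS}_g(\ell,p)$ via the forgetful vertex bijection (Theorem \ref{JZ}, Proposition \ref{diff-markings}) together with the edge comparison spelled out in Section \ref{Comparison} (Theorem \ref{comparison} and Proposition \ref{corr-results}), and then invoke Theorem \ref{main}. One small caution: the bare counting argument ``both out-degrees equal $N_g(\ell)$'' is not by itself enough to conclude that your edge map is a bijection on a multigraph---you need well-definedness with respect to the (two-sided, not just target-side) automorphism equivalence in $Gr_g(\ell,p)$, which is exactly what the finer orbit analysis in Proposition \ref{corr-results} supplies; you correctly flag this as the main obstacle and defer to Section \ref{Comparison}.
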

This result implies the rapid mixing property of a lazy version of the walk; see \cite[Theorem 4.9]{FS2}. 

We discuss some theoretical features for each of our work and previous works due to Pizer and Jordan-Zaytman. Instead of using $Gr_1(\ell,p)$, 
Pizer handled the moduli space of supersingular elliptic curves with 
non-trivial levels to avoid that non-trivial automorphisms happen 
(see \cite{Pizer-Ram}, \cite{Pizer-graph}). Therefore, his graphs are regular 
undirected graphs so that they are Ramanujan by Eichler's theorem via 
Jacquet-Langlands theory. However, if $p\equiv 1$ mod 12, then 
each vertex of $Gr_1(\ell,p)$ does not have non-trivial automorphisms other than $-1$. 

Jordan-Zaytman's graphs $Gr_g(\ell,p)$ are useful and fit into the computational 
implementations (cf. \cite{CDS}, \cite{KT}, \cite{FS1}, \cite{FS2})
as explained in the next subsection.
However, it may be hard to 
directly obtain the uniform estimation of the eigenvalues of the normalized Laplacian. 
Our graphs do not, unfortunately, well-behave in the computational 
aspects. However, there is a natural correspondence between $SS_g(p,\ell,A_0,\mL_0)$ 
and $\mathcal{S}_g$ as explained. A point here is that these two objects have markings 
from a fixed object while $SS_g(p)$ does not have it. However, fortunately, 
there is a natural correspondence between $SS_g(p)$ and $SS_g(p,\ell,A_0,\mL_0)$. 
Then eventually we can relate $SS_g(p)$ with $\mathcal{S}_g$ via the intermediate 
object $SS_g(p,\ell,A_0,\mL_0)$.  

It seems interesting to consider the moduli space of principal 
polarized superspecial abelian varieties with a non-trivial level so that 
the reduced automorphism group of any object is trivial. 
This will be discussed somewhere else. 

\subsection{Motivation from isogeny-based cryptography}

This study is largely motivated by a possible approach to cryptographic hash functions from isogeny graphs.
Let us begin with a brief review of the notation of hash functions, which are widely used in computer science.
For a general reference on hash functions in cryptographic context, see Chapter 5 of \cite{KL} for example.

A hash function $H$ is an efficiently computable function taking as input a message of any length and outputting a value of fixed length $s$, i.e. $H: \{0,1\}^*\to \{0,1\}^{s}$, where $\{0,1\}^*=\cup_{n=1}^{\infty}\{0,1\}^n$.
A standard condition required for hash functions in cryptography is collision resistance; it is computationally hard for any probabilistic polynomial-time algorithm to find a pair of distinct messages $(m_1,m_2)$ such that $H(m_1)=H(m_2)$.
Collision resistant hash functions have numerous applications in cryptography.
For example, such functions are used as components of pseudo-random generators, Hash-based Message Authentication Code, digital signatures and so on.

However, despite its importance, it is hard to construct such a function because to design a collision resistant hash function requires suitable mixing and compressing bit strings of any length. 
As one can see in \cite{Goldreich}, \cite{CGL}, there is an approach to design hash functions by employing expander graphs on which random walks mix rapidly.
Due to Pizer's work \cite{Pizer-graph}, \cite{Pizer-Ram}, isogeny graphs of supersingular elliptic curves have attracted attention as a tool for realizing a good  expansion property.
In this subsection, we explain this research direction and state our research question.

\subsubsection*{CGL hash functions}\label{CGL}

Let $p$ and $\ell$ are distinct prime numbers.
Moreover, we impose $p\equiv 1\bmod 12$.
In thi case, the vertices on $Gr_1(\ell,p)$ have no automorphism other than $\pm1$.
Charles, Goren and Lauter \cite{CGL} proposed construction of hash functions based on 
$Gr_1(\ell,p)$.

We explaine the recipe of their construction of a hash function from the graph $Gr_1(2,p)$ as follows.
Let $E_0:y^2=f(x)$ be a fixed curve in $ SS_1(p)$ where $f(x)$ is a monic cubic, and $(E_0,E_{-1})$ be a fixed edge. 
We remark that the edges are undirected due to the existence of the dual isogeny.
The non-trivial 2-torsion points are points $P^0_i=(x_i,0)$ where $x_i$  are roots of the cubic $f(x)$ for $i=0,1,2$.
Subgroups generated by each $P_i$ lead 2-isogenies outgoing from $E_0$.
Here, the points are numbered by some order of $\F_{p^2}$ and we suppose that the edge $(E_{-1},E_0)$ corresponds to the subgroup $\langle P_2\rangle$. 
Let $m=(m_{n-1},\dots,m_0)\in \{0,1\}^n$ be a random $n$-bit message.
The message $m$ determines a non-backtracking walk
\footnote{To avoid trivial collision, we impose the condition of non-backtrack on walks in this construction.}
 on $Gr_1(2,p)$ in the following way.

First, we compute an isogeny $\phi_0:E_0\to E_1$ with kernel $\langle P^0_{m_0}\rangle$ by using V\'elu's formula \cite{V}.
Second, we have non-trivial three 2-torsion points on $E_1$ and we number one of them corresponding the dual of $\phi_0$ with $P^1_2$.
The remaining two 2-torsion points are numbered by the order in $\F_{p^2}$; $P^1_0$ and $P^1_1$. 
Then, we do a similar procedure for $E_1$ and obtain $E_2$ by computing the isogeny with kernel generated by $P^0_{m_1}$.
Finally, by repeating this computation, the end-point $E_n\in SS_1(p)$ is obtained as the terminal of the sequence of supersingular elliptic curves $(E_0,E_1,\dots,E_{n-1})$ such that $j_{E_{i-1}}\neq j_{E_{i}}$ for $i=1,\dots, n-1$.
To get a compressed value of $m$ from $E_n$, in \cite{CGL}, the authors propose using some linear function $f:\F_{p^2}\to \F_p$; that is, $H(m):=f(j_{E_n})$. 
In this way, we construct the function $H:\{0,1\}^*\to \{0,1\}^{\lfloor{\rm log}_2(p)\rfloor+1}$ from non-backtracking random walks on $\Gonetwo$, which is called CGL hash function now.
In a similar fashion, the hash function using 3-isogeny is also investigated in \cite{TTT} .

The Ramanujan property of $Gr_1(2,p)$ for $p\equiv 1\bmod 12$ guarantees efficient mixing processing of these functions
(for most precise results, see \cite[Theorems 1 and 3.5]{LubetzkyPeres}).
In view of security of these functions, the collision resistant property is supported by assumptions on hardness of computing a chain of isogenies between given isogenous supersingular elliptic curves.
Indeed, finding collisions yield to pairs of supersingular elliptic curves $(E,E')$ and two chains of $\ell$-isogenies between them whose kernels are distinct each other.

\subsubsection*{Isogeny-based Cryptography}
We provide a little bit about the recent progress in public-key cryptography using supersingular isogenies.
The above construction of the cryptographic hash function from supersingular isogenies opens the door to a new research area of practical public key cryptography
whose security relies on computational hardness of computing isogenies between given two supersingular elliptic curves.
Public key cryptography in such style is called isogeny-based cryptography now.


Here, what is important is that there is currently no known polynomial-time (even quantum)
algorithm to compute an isogeny between given two supersingular elliptic curves unlike the integer factorization problem or the discrete logarithm problem.
Indeed, an isogeny-based cryptography is regarded as an important object in the context of post-quantum cryptography:
it has been proposed as cryptographic primitives,
for example, SIDH (Supersingular Isogeny Diffie-Hellman)\cite{DJP, JD} and CSIDH (Commutative Supersingular Isogeny Diffie-Hellman)\cite{CLMPR}.
Therefore, isogeny-based cryptography is one of the promising candidates of post-quantum cryptography among lattice-based cryptography, code-based cryptography and multivariate cryptography.

\subsubsection*{Toward higher dimensional analogue of CGL hash functions}

There have been several studies on the big isogeny graph $Gr_g(\ell,p)$ defined in \cite{JZ} from both number theoretic and cryptographic viewpoints.
In the rest of this section, we describe recent progress in studies on the graphs $Gr_g(\ell,p)$ 
and a contribution of our work in this context.

Concerning the two dimensional case, the CGL-like construction of hash functions was first attempted by Takashima \cite{T},
which used the supersingular $(2)^2$-isogeny graph ({\it i.e.}, the case when $g=2$ and $\ell=2$).
However, Flynn and Ti \cite{FT} showed that this graph has many short cycles from which trivial collisions of random walks may be derived.
After these works, Castryck, Decru and Smith \cite{CDS} modified Takashima's construction and suggested to use a subgraph of isogeny graphs $Gr_2(2,p)$ of superspecial abelian varieties consisting of Jacobians of curves of genus $2$.
The idea here is to keep choosing paths to become good extension, which allow us to avoid trivial collisions. 
Moreover, they counted the number of good extensions of a $(2)^2$-isogeny (see Proposition 3 in \cite{CDS}).
There are eight good extensions for an isogeny between Jacobians,  which are suitable for associating 3-bit information to one step of a random walk.

In the case of abelian varieties of dimension $\geq 2$, the existence of nontrivial automorphisms complicates the structure of graphs.
For $g=2$, the classification of possible automorphism groups arising from Jacobians and elliptic product was done by Ibukiyama, Katsura and Oort \cite{IKO}. 
Based on these results, in the case when $g=2$ and $\ell=2$, Katsura and Takashima \cite{KT} counted the number of Richelot isogenies and decomposed Richelot isogenies up to isomorphism outgoing from Jacobians and those outgoing from elliptic products and computed the multiplicity of each edge.
Moreover, advancing this work further, Florit and Smith \cite{FS1} studied the local neighborhoods  of vertices and edges in  $Gr_2(2,p)$ and gave many illustrations.
In \cite{FS2}, they also investigated behavior of random walks on the big isogeny graphs and gave numerical experiments of the mixing rate of $Gr_2(2,p)$.

However, we know little about expansion properties of these graphs so far.
Our contribution is to give an affirmative answer to this question in Theorem \ref{main} and Theorem \ref{mainJZ}.
In this paper, good mixing property of the big isogeny $Gr_g(\ell,p)$ is shown as a result of proving that the isogeny graphs $\Gc^{SS}_{g}(\ell,p)$ defined in this paper have good expansion property and they are equivalent to the big isogeny graphs $Gr_g(\ell,p)$.
So, random walks on the graphs $\Gc^{SS}_{g}(\ell,p)$ and $Gr_g(\ell,p)$ tend to the natural stationary distribution rapidly.
This gives an evidence that the big isogeny graphs $Gr_g(\ell,p)$, which have been investigated, may be suitable for cnstruction of cryptographic hash functions from superspecial abelian varieties.

Finally, we give an example of an illustration of a graph considered in this paper, i.e. $\Gc^{SS}_{g}(\ell,p)$, which is equivalent to the big isogeny graph $Gr_g(\ell,p)$.
For $\ell=2$ and $p=13$, the graph $Gr_g(\ell,p)$ is computed in \cite{CDS} and \cite{KT} as illustrated below. 
\begin{figure}[h]
\includegraphics[scale=0.25]{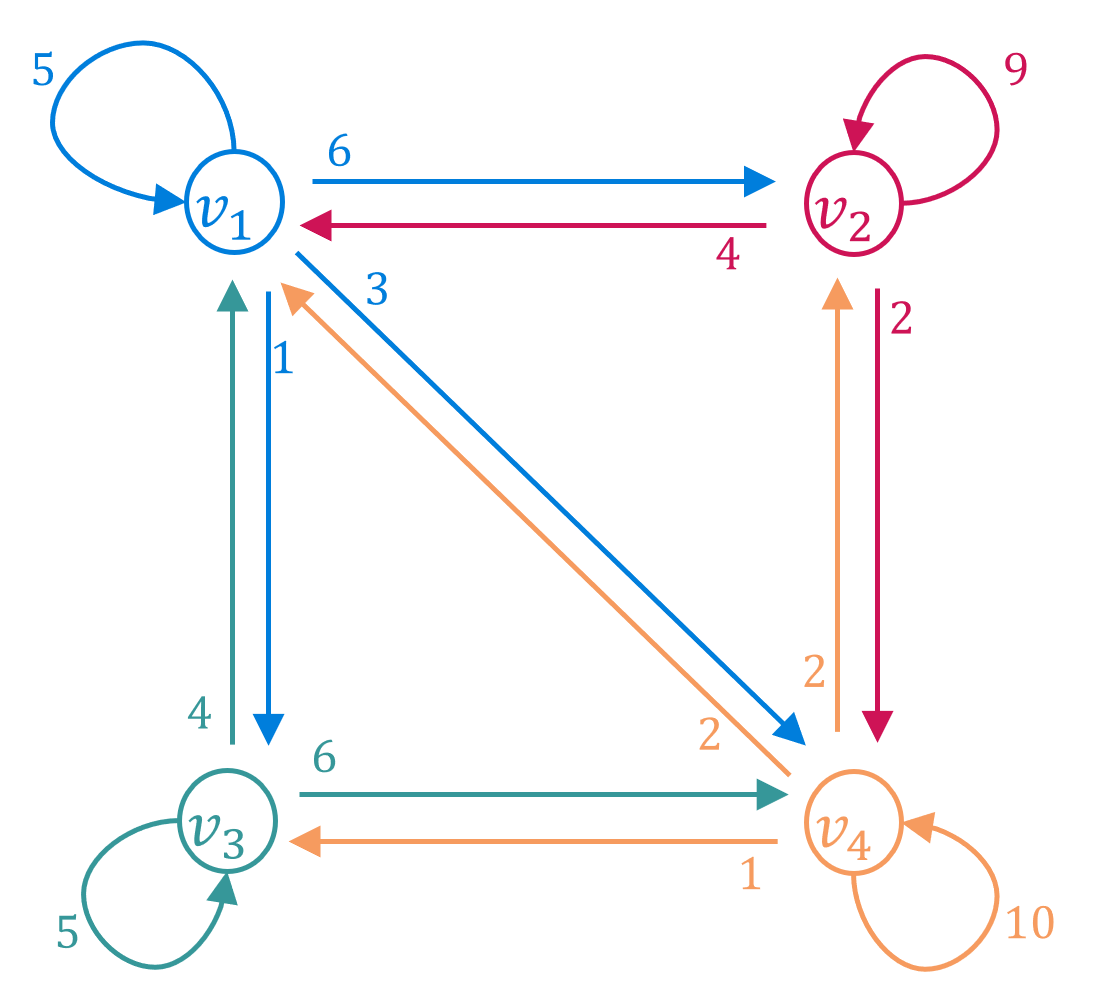}
\caption{An illustration of $Gr_2(2,13)$. The vertices $v_1$, $v_2$ and $v_3$ donote the Jacobians of curves defined by $C_1:y^2=(x^3-1)(x^3+4-\sqrt{2})$, $C_2:y^2=x(x^2-1)(x^2+5+2\sqrt{6})$ and $C_3:y^2=x^5-x$, respectively.
The vertex $v_4$ denotes the product of supersingular elliptic curves $y^2=(x-1)(x-3+2\sqrt{2})$.
The number on the side of a directed edge denotes the multiplicity of each edge.
For a more detailed illustration, see \S 7.1 of \cite{KT}.}
\label{Fig:p13}
\end{figure}

\subsection{Organization of this paper}
In Section \ref{SSAV}, we give two interpretations of $SS_g(p)$ according to 
works of Ibukiyama-Katsura-Oort-Serre and Jordan-Zaytman. 
The former is helpful to compute the cardinality of $SS_g(p)$ while the latter is 
helpful to make the compatibility of Hecke operators at $\ell$ transparent. 
As mentioned before, this is a crucial step to apply Property (T) (hence, Theorem 
\ref{thm:gap}) with our family $\{\mathcal{G}^{SS}_{g}(\ell, p)\}_{p\neq \ell}$. 
In Section \ref{Comparison},
we discuss
a comparison between the graph $\Gc_g^{SS}(\ell, p)$ and that of Jordan-Zaytman $Gr_g(\ell, p)$.
In Section \ref{BT}, we study Bruhat-Tits buildings 
for symplectic groups. Then, in Section \ref{PT}, the main result is proved in terms of 
the terminology in the precedent sections. Finally, in Section \ref{GrossDef} we give a speculation in view 
of the theory of automorphic forms.

\subsection{Notations}\label{notation}
For a set $X$, the cardinality is denoted by $|X|$.
Throughout the paper, we use the Landau asymptotic notations: for positive real-valued functions $f(n)$ and $g(n)$ for integers $n$,
we denote by $f(n)=o(g(n))$ if $f(n)/g(n) \to 0$ as $n \to \infty$,
by $f(n)=O(g(n))$ if there exists a positive constant $C>0$ such that $f(n) \le C g(n)$ for all large enough $n$,
and $f(n)=\Theta(g(n))$ if we have both $f(n)=O(g(n))$ and $g(n)=O(f(n))$. 

Let $n$ be a positive integer and $I_n$ the identity matrix of size $n$. 
Put 
\[
J_n=
\begin{pmatrix}
0 & I_n \\
-I_n & 0
\end{pmatrix}
.
\]  
We define a functor 
$G:(Rings)\lra (Sets)$ from the category of rings to the category of sets by 
$$G(R):=\{M\in M_{2n}(R)\ |\ {}^t MJ_nM=\nu(M)M,\ \text{for some }\nu(M)\in R^\times\}
$$
for each commutative ring $R$ and we call $\nu(M)$ the similitude of $M$. 
It is well-known that the functor $G$ is represented 
by a smooth group scheme $GSp_n$ over $\Z$. The similitude defines a homomorphism 
$\nu:GSp_n\lra GL_1$ as group schemes over $\Z$. We define  
$Sp_n:={\rm Ker}(\nu)$ which is called the symplectic group of rank $n$. 
The similitude splits and in fact it is given by $a\mapsto \diag(I_n,a I_n)$. 
It follows from this that $G\simeq Sp_n \rtimes GL_1$. 
In the sections related to abelian varieties, we put $n=g$ while we keep 
$n$ in Section \ref{BT} through \ref{PT}. 

For any algebraic group $H$ over  a field, we denote by $Z_{H}$ 
the center of $H$.

\subsection*{Acknowledgements}
We would like to thank Professors Yevgeny Zaytman and Bruce Jordan for informing us of issues in the first version of this paper and correcting references; the current version has been revised substantially since then-----we greatly acknowledge them for their comments.
We would also like to thank
 Professor Tsuyoshi Takagi for many helpful discussions and encouragement, and Professor Ken-ichi Kawarabayashi for fostering an ideal environment which made this collaboration possible.
Y.A.\ is supported by JST, ACT-X Grant Number JPMJAX2001, Japan. 
R.T.\ is partially supported by 
JSPS Grant-in-Aid for Scientific Research JP20K03602 and JST, ACT-X Grant Number JPMJAX190J, Japan. 
T.Y.\ is partially supported by 
JSPS KAKENHI Grant Number (B) No.19H01778.

\section{Superspecial abelian varieties}\label{SSAV}
In this section we refer \cite{Mum} for some general facts of 
abelian varieties. The purpose here is to understand 
Theorem 2.10 of \cite{IKO} in terms of the adelic language which is implicitly given there. 
Another formulation is also given in terms of $\ell$-adic Tate modules 
(see also Theorem 46 of \cite{JZ} in more general setting). 
This explains the compatibility of Hecke operators on principally polarized superspecial abelian varities 
and the special 1-complex of the Burhat-Tits building in question. 
This result will be plugged into the main result in Section \ref{PT} to 
prove Theorem \ref{main}. 

\subsection{Superspecial abelian varieties}\label{ssav}
Let $p$ be a prime number and $k$ be an algebraically closed field of a finite field of characteristic $p$. In our purpose, we may put $k=\bF_p$. Let $A$ be an abelian variety over $k$ of dimension $g>0$ and we denote by $\wA={\rm Pic}^0(A)$ the dual abelian variety (cf. Section 9 of \cite{Milne}). 
The abelian variety $A$ is said to be superspecial if $A$ is isomorphic to 
$E^g=\overbrace{E\times \cdots \times E}^{g}$ for some supersingular elliptic curve $E$ over $k$ (see Sections 1.6 and 1.7 of \cite{LO} for another definition 
in terms of $a$-number).  
As explained in loc.cit., for any fixed supersingular elliptic curve $E_0$ over $k$, 
every superspecial abelian variety of dimension $g\ge 2$ is isomorphic to $E^g_0$.
(Here the assumption $g\ge 2$ is essential and indeed, this is not true for $g=1$. 
See also Theorem 4.1 in Chapter V of \cite{Sil}.)
Throughout this section, we fix a supersingular elliptic curve  $E_0$. 
\subsection{Principal polarizations}
Let $A$ be an abelian variety over $k=\bF_p$. A polarization is a class of 
the N\'eron-Severi group ${\rm NS}(A):={\rm Pic}(A)/{\rm Pic}^0(A)$ which is represented by 
an ample line bundle on $A$. The definition of polarizations here is 
different from the usual one but it is equivalent by Remark 13.2 of \cite{Milne} since $k=\bF_p$. 

For each ample line bundle $\mL$ we define a 
homomorphism 
$$\phi_\mL:A\lra \wA,\ x\mapsto t^\ast_x(\mL)\otimes \mL^{-1}$$
where $t_x$ stands for the translation by $x$ and we denote by $t^\ast_x$ its pullback. 
By APPLICATION 1, p.60 in Section 6 of Chapter II in \cite{Mum}, $\phi_\mL$ is an isogeny, hence 
it has a finite kernel since $\mL$ is ample. If we write $D$ for an ample divisor on $A$ corresponding to $\mL$, namely, $\mL\simeq \O(D)$, then by Riemann-Roch  theorem in p.150 of loc.cit., we have 
\begin{equation}\label{rrt}
\chi(\mL)=\frac{(D^g)}{g!},\ \chi(\mL)^2=\deg \phi_\mL
\end{equation}
where $\chi(\mL)$ stands for the Euler-Poincare characteristic of $\mL$ and 
$(D^g)$ is the $g$-fold self-intersection number of $D$. Since $\mL$ is ample, 
$\chi(\mL)>0$. 

\begin{Def}\label{pp}Keep the notation being as above. 
\begin{enumerate}
\item  An ample line bundle $\mL\simeq \O(D)$ on $A$ is said to be 
a principal polarization if $\deg \phi_\mL=1$, equivalently,  $(D^g)=g!$. 
\item For a principal polarization $\mL$ on $A$, we call a couple $(A,\mL)$ 
a principally polarized abelian variety. For two polarized abelian varieties 
$(A_1,\mL_1),\ (A_2,\mL_2)$, a morphism between them is a homomorphism 
$f:A_1\lra A_2$ such that $\phi_{\mL_1}=\widehat{f}\circ \phi_{\mL_2}\circ f$  
where $\widehat{f}$ is the dual of $f$ defined by the pullback of $f$ on line bundles. 
Since $\widehat{f}\circ \phi_{\mL_2}\circ f=\phi_{f^\ast\mL_2}$, 
the above condition is equivalent to $\phi_{f^\ast\mL_2}=\phi_{\mL_1}$. 
\end{enumerate}
\end{Def}

\begin{prop}\label{uni-1} Let $(A,\mL)$ be a principally polarized abelian 
variety in characteristic $p$. Let $\ell$ be a prime number different from $p$ and 
$C$ be a maximal totally isotropic subspace of $A[\ell^n]$ for $n\in \Z_{\ge 0}$ with respect to the 
Weil pairing associated to $\mL$. 
Then, there exists an ample line bundle $\mL_C$ on the quotient abelian variety $A_C:=A/C$  which is unique up to isomoprhism 
such that $(A_C,\mL_{A_C})$ is  a principally polarized abelian 
variety in characteristic $p$ such that $f^\ast_C \mL_{A_C}=\mL^{\otimes \ell^n}$ 
where $f_C:A\lra A_C$ is the natural surjection.  
\end{prop}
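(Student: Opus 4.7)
The plan is to construct $\mL_C$ by Grothendieck--Mumford descent along the isogeny $f_C:A\lra A_C$, then to verify ampleness, principal polarization, and uniqueness via a degree computation and the injectivity of $f_C^\ast$ on N\'eron--Severi.

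The setup for descent proceeds as follows. Since $\mL$ is a principal polarization, $\phi_\mL:A\lra \wh A$ is an isomorphism, so $\phi_{\mL^{\otimes \ell^n}}=\ell^n\phi_\mL$ has kernel exactly $A[\ell^n]$. The commutator pairing of the theta group $\Gc(\mL^{\otimes \ell^n})$ on $A[\ell^n]$ coincides, up to the standard sign convention, with the Weil pairing on $A[\ell^n]$ associated to $\mL$. By hypothesis, $C$ is a maximal totally isotropic subgroup of $A[\ell^n]$ for this pairing, hence Lagrangian of order $\ell^{ng}$. Because $\ell\neq p$, the group $C$ is \'etale, and Mumford's descent criterion (cf.\ \S 23 of \cite{Mum}) guarantees a level subgroup $\wt C\subset \Gc(\mL^{\otimes \ell^n})$ lying over $C$; any such choice produces a line bundle $\mL_C$ on $A_C=A/C$ with $f_C^\ast \mL_C\simeq \mL^{\otimes \ell^n}$.

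Next I would verify the two asserted properties of $\mL_C$. Ampleness follows since $f_C$ is finite and surjective and pullback along such morphisms detects ampleness. For the principal polarization property, the functorial identity $\phi_{f_C^\ast \mL_C}=\wh f_C\circ \phi_{\mL_C}\circ f_C$ gives
\[
\deg \phi_{\mL^{\otimes \ell^n}}=(\deg f_C)^2\cdot\deg \phi_{\mL_C}.
\]
Using $\deg \phi_{\mL^{\otimes \ell^n}}=\deg(\ell^n\phi_\mL)=\ell^{2ng}$ (since $\deg \phi_\mL=1$) and $\deg f_C=|C|=\ell^{ng}$, I obtain $\deg \phi_{\mL_C}=1$, so $(A_C,\mL_C)$ is principally polarized.

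For uniqueness, two line bundles on $A_C$ pulling back to $\mL^{\otimes\ell^n}$ differ by an element of $\Ker(f_C^\ast:{\rm Pic}(A_C)\lra {\rm Pic}(A))$, and this kernel lies in ${\rm Pic}^0(A_C)$: indeed, if $f_C^\ast \mL'\in{\rm Pic}^0(A)$, then $\wh f_C\circ \phi_{\mL'}\circ f_C=0$, and $f_C$ surjective together with $\wh f_C$ having finite kernel force $\phi_{\mL'}=0$, i.e.\ $\mL'\in{\rm Pic}^0(A_C)$. Consequently, the class of $\mL_C$ in ${\rm NS}(A_C)$---and hence the polarization $\phi_{\mL_C}$ in the sense of Definition~\ref{pp}---is canonical, which is the claimed uniqueness. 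The main technical point will be carefully matching the Weil pairing associated to $\mL$ with the commutator pairing on $\Gc(\mL^{\otimes \ell^n})$ so that the maximal isotropy hypothesis on $C$ translates into the existence of the lift $\wt C$; once this identification is in place, the remainder of the argument is a formal bookkeeping of degrees and N\'eron--Severi classes.
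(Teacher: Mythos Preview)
Your argument is correct and is essentially an unpacking of the reference the paper invokes: the paper's entire proof is the one-line citation ``Notice that $\mL$ is symmetric. The claim follows from (11.25) Proposition of \cite{vdGM},'' and what you have written---theta-group descent \`a la \S23 of \cite{Mum} over an isotropic subgroup of order prime to $p$, followed by the degree count $\deg\phi_{\mL^{\otimes\ell^n}}=(\deg f_C)^2\deg\phi_{\mL_C}$---is exactly the content of that proposition. Your reading of the uniqueness clause is also the right one: since Definition~\ref{pp} takes a polarization to be a N\'eron--Severi class, what is canonical is the class of $\mL_C$ in ${\rm NS}(A_C)$ (equivalently $\phi_{\mL_C}$), not the line bundle itself in ${\rm Pic}(A_C)$, and you prove precisely this. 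The symmetry remark in the paper's proof is a hypothesis of the cited proposition in \cite{vdGM} but plays no role in your direct descent argument, so you need not worry about it.
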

\begin{proof}Notice that $\mL$ is symmetric. The claim follows from 
(11.25) Proposition of \cite{vdGM}. 
\end{proof}
\begin{Def}\label{ellg-iso}Let $(A_1,\mL_1)$ and $(A_2,\mL_2)$ be two principally polarized abelian varieties in characteristic $p$. Let $\ell$ be  a prime different from $p$.  
\begin{enumerate}
\item An isogeny $f:A_1\lra A_2$ is said to be an $(\ell)^g$-isogeny if 
\begin{itemize}
\item ${\rm Ker}(f)$ is a maximal totally isotropic subspace of $A[\ell]$ 
with respect to the 
Weil pairing associated to $\mL_1$, and
\item $f^\ast\mL_2\simeq \mL^{\otimes \ell}_1$.
\end{itemize}
\item An isogeny $f:A_1\lra A_2$ is said to be an $\ell$-marking of $(A_2,\mL_2)$ from 
$(A_1,\mL_1)$ if 
$f^\ast \mL_2=\mL^{\otimes\ell^m}_1$ for some integer $m\ge 0$. 
\end{enumerate}
\end{Def}

\begin{prop}\label{dual-marking}Keep the notation in Definition \ref{ellg-iso}.  
Let $f:A_1\lra A_2$ be an $\ell$-marking of $(A_2,\mL_2)$ from 
$(A_1,\mL_1)$, then there exists an $\ell$-marking $\widetilde{f}:A_2\lra A_1$ 
of $(A_1,\mL_1)$ from 
$(A_2,\mL_2)$ such that 
$f\circ \widetilde{f}=[\ell^m]_{A_2}$ and  $\widetilde{f}\circ f=[\ell^m]_{A_1}$ for some integer $m\ge 0$.
\end{prop}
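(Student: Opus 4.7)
The plan is to construct $\widetilde{f}$ explicitly from the dual isogeny twisted by the two principal polarizations, and then to verify the asserted identities by functorial manipulation of $\phi_\mL$.

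Concretely, I would set $\widetilde{f} := \phi_{\mL_1}^{-1} \circ \widehat{f} \circ \phi_{\mL_2} : A_2 \lra A_1$, which is well-defined because the principal polarizations $\phi_{\mL_i} : A_i \to \widehat{A_i}$ are isomorphisms. The hypothesis $f^\ast \mL_2 \simeq \mL_1^{\otimes \ell^m}$ combined with the identity $\phi_{f^\ast \mL_2} = \widehat{f} \circ \phi_{\mL_2} \circ f$ from Definition \ref{pp} immediately gives $\widehat{f} \circ \phi_{\mL_2} \circ f = \ell^m \phi_{\mL_1}$, and composing with $\phi_{\mL_1}^{-1}$ yields $\widetilde{f} \circ f = [\ell^m]_{A_1}$. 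The companion identity $f \circ \widetilde{f} = [\ell^m]_{A_2}$ then follows by composing on the right with the surjective $f$ and cancelling, using $f \circ [\ell^m]_{A_1} = [\ell^m]_{A_2} \circ f$.

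Next I would check that $\widetilde{f}$ is itself an $\ell$-marking, i.e.\ $\widetilde{f}^\ast \mL_1 \simeq \mL_2^{\otimes \ell^m}$. Using biduality $\widehat{\widehat{f}} = f$ and self-duality $\widehat{\phi_{\mL_i}} = \phi_{\mL_i}$ of the principal polarizations, I would compute $\widehat{\widetilde{f}} = \phi_{\mL_2} \circ f \circ \phi_{\mL_1}^{-1}$, and hence
\[
\phi_{\widetilde{f}^\ast \mL_1} = \widehat{\widetilde{f}} \circ \phi_{\mL_1} \circ \widetilde{f} = \phi_{\mL_2} \circ (f \circ \widetilde{f}) = \ell^m \phi_{\mL_2} = \phi_{\mL_2^{\otimes \ell^m}}.
\]
So $\widetilde{f}^\ast \mL_1$ and $\mL_2^{\otimes \ell^m}$ represent the same class in ${\rm NS}(A_2)$.

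The main obstacle is upgrading this N\'eron-Severi equality to an actual isomorphism of line bundles, since at this stage the two can still differ by an element of $\mathrm{Pic}^0(A_2)$. To resolve this I would apply Proposition \ref{uni-1} to $(A_2, \mL_2)$ with $C := {\rm Ker}(\widetilde{f})$; this subgroup lies in $A_2[\ell^m]$ (because $f \circ \widetilde{f} = [\ell^m]_{A_2}$) and is maximal totally isotropic (by the degree count $\deg\widetilde{f} = \deg f = \ell^{mg}$, matching the maximality from the $\ell$-marking hypothesis on $f$). The uniqueness up to isomorphism of the descended principal polarization, together with pulling back by $f$ (where symmetry of $\mL_1$ yields $[\ell^m]^\ast \mL_1 \simeq \mL_1^{\otimes \ell^{2m}} \simeq (f^\ast \mL_2)^{\otimes \ell^m}$), identifies the unique descent with $\mL_1$ and forces $\widetilde{f}^\ast \mL_1 \simeq \mL_2^{\otimes \ell^m}$, completing the proof.
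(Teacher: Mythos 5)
Your construction is correct, and it takes a genuinely different route from the paper. The paper first invokes Theorem 34 of \cite{JZ} to reduce to the case that $f$ is a single $(\ell)^g$-isogeny, sets $C={\rm Ker}(f)$, and obtains $\widetilde{f}$ as the quotient by the complementary Lagrangian $D=A_1[\ell]/C$, identifying $A_{1,C}/D=A_1/A_1[\ell]\simeq A_1$ together with the descended polarization via Proposition \ref{uni-1}; the two composition identities then follow from the symmetry of the construction. You instead write $\widetilde{f}=\phi_{\mL_1}^{-1}\circ\widehat{f}\circ\phi_{\mL_2}$ outright and verify $\widetilde{f}\circ f=[\ell^m]_{A_1}$, $f\circ\widetilde{f}=[\ell^m]_{A_2}$ and $\widehat{\widetilde{f}}\circ\phi_{\mL_1}\circ\widetilde{f}=\ell^m\phi_{\mL_2}$ purely by functoriality of $\mL\mapsto\phi_\mL$ and biduality. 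This buys you a proof that needs no reduction to the $(\ell)^g$ case and no appeal to \cite{JZ}, works uniformly in $m$, and produces exactly the identity for $\widetilde{f}$ that the paper itself uses in the proof of Proposition \ref{diff-markings}. Since the paper defines a polarization as a class in ${\rm NS}(A)$, the marking condition $\widetilde{f}^\ast\mL_1=\mL_2^{\otimes\ell^m}$ is an equality of N\'eron--Severi classes, equivalently of the associated homomorphisms; so your computation of $\phi_{\widetilde{f}^\ast\mL_1}$ already finishes the proof at that point.

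Your final paragraph, the attempted upgrade from equality in ${\rm NS}(A_2)$ to an isomorphism of line bundles, is therefore not needed, and as written it does not go through. First, the degree count $\deg\widetilde{f}=\ell^{mg}$ only gives ${\rm Ker}(\widetilde{f})$ the right order; isotropy for the Weil pairing of $\mL_2$ requires an argument (for instance: $\widetilde{f}^\ast\mL_1$ and $\mL_2^{\otimes\ell^m}$ differ by an element of ${\rm Pic}^0(A_2)=\widetilde{f}^\ast{\rm Pic}^0(A_1)$, so $\mL_2^{\otimes\ell^m}$ descends along $\widetilde{f}$, which forces the kernel to be isotropic). Second, and more seriously, pulling back by $f$ only shows that the unique descended bundle and $\mL_1$ agree after applying $[\ell^m]^\ast$, so they may still differ by an $\ell$-power torsion element of ${\rm Pic}^0(A_1)$; the uniqueness statement in Proposition \ref{uni-1} does not identify the descent with $\mL_1$ on the nose. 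In fact, at the level of honest line bundles the asserted isomorphism can fail: already for $\ell=2$, taking $f=f_0\times f_0$ on a product of elliptic curves with $\mL_1$ the product of origin divisors and $\mL_2$ a product of suitable degree-one divisors, one has $f^\ast\mL_2\simeq\mL_1^{\otimes 2}$ while $\widetilde{f}^\ast\mL_1\not\simeq\mL_2^{\otimes 2}$, the two differing by a nontrivial $2$-torsion point of ${\rm Pic}^0$. This is precisely why the statement should be, and in this paper is, read at the level of polarization classes, where your argument is complete.
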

\begin{proof}
By Theorem 34 of \cite{JZ}, we may assume $f$ is an $(\ell)^g$-isogeny. 
Put $C={\rm Ker} f$. Then $(A_2,\mL_2)=(A_{1,C},\mL_{A_{1,C}})$ where $A_{1,C}=A_1/C$.  
It is easy to see that $D:=A_1[\ell]/C$ is a maximal totally isotropic subspace of 
$A_{1,C}[\ell]$ with respect to the Weil pairing associated to $\mL_{A_{1,C}}$. 
Therefore, we have an $(\ell)^g$-isogeny $\widetilde{f}:A_2\lra A_{1,C}/D$. 
However, $A_{1,C}/D=A/A[\ell]\simeq A$ and the later isomorphism induces 
the identification of $(A_{1,C}/D,\mL_D)$ and $(A_1,\mL_1)$ where 
$\mL_D$ stands for a unique descend of $\mL_{A_{1,C}}$ on $A_{1,C}/D$ 
(see Proposition \ref{uni-1}). 
The proportion of $f$ and $\widetilde{f}$ is symmetric and hence we have the claim.   
\end{proof}
We study the difference of two $\ell$-markings. 
Let us keep the notation in Definition \ref{ellg-iso}. 
By using the principal polarization $\mL_1$ we define the Rosati-involution $\dagger$ 
on ${\rm End}(A_1)$ by 
\begin{equation}\label{rosati}
f^\dagger=\phi^{-1}_{\mL_1}\circ \widehat{f}\circ \phi_{\mL_1},\ f\in {\rm End}(A_1). 
\end{equation}
Notice that $\dagger$ is an anti-involution.

\begin{prop}\label{diff-markings}Let us still keep the notation in Definition \ref{ellg-iso}.  
Let $f,h:A_1\lra A_2$ be two $\ell$-markings. 
Then there exists $\psi\in {\rm End}(A_1)\otimes \Z[1/\ell]$ such that 
$f\circ \psi=h$ and $\psi\circ \psi^{\dagger}= \psi^{\dagger}\circ \psi=[\ell^m]_{A_1}$ 
for some integer $m$. 
\end{prop}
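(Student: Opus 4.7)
The plan is to use Proposition \ref{dual-marking} to convert two markings into a single endomorphism up to a power of $\ell$, then exploit the compatibility of $\ell$-markings with the polarizations to show the resulting endomorphism satisfies the similitude-type identity.

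Write $f^\ast\mL_2=\mL_1^{\otimes \ell^a}$, $h^\ast\mL_2=\mL_1^{\otimes\ell^b}$ for integers $a,b\ge 0$. Applying Proposition \ref{dual-marking} to $f$ produces a dual $\ell$-marking $\widetilde{f}\colon A_2\to A_1$ with $\widetilde{f}\circ f=[\ell^a]_{A_1}$, $f\circ\widetilde{f}=[\ell^a]_{A_2}$ (after rescaling the exponent if necessary, the same integer works in both directions; otherwise one can absorb the discrepancy into the definition of $\psi$). I then set
\[
\psi:=\frac{1}{\ell^a}\,\widetilde{f}\circ h\ \in\ \mathrm{End}(A_1)\otimes\Z[1/\ell].
\]
The identity $f\circ\psi=\ell^{-a}(f\circ\widetilde{f})\circ h=h$ is immediate from the defining relation of $\widetilde{f}$.

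The heart of the argument is to verify $\psi\circ\psi^{\dagger}=\psi^{\dagger}\circ\psi=[\ell^m]_{A_1}$ for a suitable integer $m$. Here I would translate the marking condition $g^\ast\mL'=\mL^{\otimes\ell^k}$ into the equivalent statement
\[
\widehat{g}\circ\phi_{\mL'}\circ g=\ell^{k}\phi_{\mL},
\]
using that $\phi_{\mL^{\otimes n}}=n\phi_{\mL}$. Applying this to $f$, $h$, and $\widetilde{f}$ (with exponents $a$, $b$, and some $c\ge 0$ coming from $\widetilde{f}^{\,\ast}\mL_1=\mL_2^{\otimes\ell^c}$), and working throughout in $\mathrm{End}(A_1)\otimes\Q$ where $\phi_{\mL_i}$ and all isogenies are invertible, a direct manipulation shows
\[
\widetilde{f}\circ\phi_{\mL_2}^{-1}\circ\widehat{\widetilde{f}}\circ\phi_{\mL_1}=\ell^{c}\,\mathrm{id}_{A_1},\qquad \phi_{\mL_1}^{-1}\circ\widehat{h}\circ\phi_{\mL_2}\circ h=\ell^{b}\,\mathrm{id}_{A_1}.
\]
Substituting into the expression for $\psi\circ\psi^{\dagger}$ (expanding $\psi^{\dagger}=\ell^{-a}\phi_{\mL_1}^{-1}\circ\widehat{h}\circ\widehat{\widetilde{f}}\circ\phi_{\mL_1}$ from the definition \eqref{rosati} of the Rosati involution) telescopes to $\ell^{b+c-2a}\,\mathrm{id}_{A_1}$, and an analogous calculation handles $\psi^{\dagger}\circ\psi$. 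Thus $m:=b+c-2a$ works.

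The main obstacle I anticipate is purely bookkeeping: keeping the various exponents $a,b,c$ coherent across the two orderings of composition, and ensuring that passage to $\mathrm{End}(A_1)\otimes\Q$ is legitimate (which it is, since on a superspecial abelian variety every isogeny becomes invertible after tensoring with $\Q$). The result a priori lies in $\mathrm{End}(A_1)\otimes\Q$, but since it equals a power of $\ell$ times the identity it automatically lies in $\mathrm{End}(A_1)\otimes\Z[1/\ell]$, matching the statement of the proposition.
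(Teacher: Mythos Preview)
Your proposal is correct and follows essentially the same route as the paper: construct $\psi$ as $\ell^{-a}\widetilde f\circ h$ via the dual marking of Proposition~\ref{dual-marking}, then verify the similitude identity by translating each $\ell$-marking into a relation $\widehat g\circ\phi_{\mL'}\circ g=\ell^k\phi_\mL$ and telescoping. Your tracking of exponents is in fact more careful than the paper's (note that your two displayed identities plug directly into $\psi^\dagger\circ\psi$ rather than $\psi\circ\psi^\dagger$, giving $m=b+c-2a$; the other order then follows since $\psi^\dagger\circ\psi=\ell^m\,\mathrm{id}$ forces $\psi^{-1}=\ell^{-m}\psi^\dagger$).
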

\begin{proof}
For $f$, let $\widetilde{f}:A_2\lra A_1$ be an $(\ell)^g$-isogeny in Proposition \ref{dual-marking}. Put $\psi_1=\widetilde{f}\circ h\in {\rm End}(A_1)$. 
Then we have, by definition, 
$$\psi_1\circ \psi^{\dagger}_1=(\widetilde{f}\circ h) \circ 
(\phi^{-1}_{\mL_1}\circ \widehat{h}\circ \widehat{\widetilde{f}} \circ \phi_{\mL_1}).$$
It follows from Theorem 34 of \cite{JZ} and Definition \ref{ellg-iso} that 
$\widehat{\widetilde{f}}\circ \phi_{\mL_1}\circ \widetilde{f}=
\phi_{\mL^{\otimes\ell^{m}}_2}=\ell^m \phi_{\mL_2}$ and 
$\widehat{h}\circ \phi_{\mL_2}\circ h=\phi_{\mL^{\otimes\ell^{m'}}_1}=\ell^{m'} \phi_{\mL_1}$ 
for some integers $m',m\ge 0$. This yields  
$$\psi_1\circ \psi^{\dagger}_1=\ell^m \widetilde{f}\circ \phi^{-1}_{\mL_2}
\circ \widehat{\widetilde{f}} \circ \phi_{\mL_1}=\ell^{m+m'}{\rm id}_{A_1}.$$
Further, $f\circ \psi=(f\circ \widetilde{f})\circ h=\ell^m h$. Therefore, we may put 
$\psi=\ell^{-m}\psi_1$ as desired. 
\end{proof}

\subsection{Class number of the principal genus for quaternion Hermitian lattices}\label{CN}
In this subsection we refer Section 3.2 of \cite{Ibu20} for the facts and the notation.
Let $p$ be a prime number and $n$ be a positive integer. 
Let $B$ be the definite quaternion algebra ramified only at $p$ and $\infty$ 
(see Proposition 5.1, p.368 of \cite{Pizer-Algo} for an explicit realization). 
We write $B=\Big(\ds\frac{a,b}{\Q}\Big)=\Q\cdot 1\oplus \Q\cdot i\oplus \Q\cdot j \oplus
\Q\cdot ij$ with $i^2=a,\ j^2=b,$ and $ij=-ji$. For each $x=x_0+x_1 i+x_2 j+x_3 ij$, the conjugation of 
$x$ is defined by $\overline{x}=x_0-x_1 i-x_2 j-x_3 ij$. Then $N(x)=x\overline{x},\ 
{\rm Tr}(x)=x+\overline{x}$ are called the norm and the trace of $x$ respectively. 
Let us fix a maximal order $\O$ of $B$ which is also explicitly given in Proposition 5.2 of 
\cite{Pizer-Algo}. 
The pairing $(\ast,\ast)$ on $B^n$ is defined by the following manner:
$$(x,y)=\sum_{i=1}^nx_i\overline{y_i}\ 
\text{for $x=(x_1,\ldots,x_n),\ y=(y_1,\ldots,y_n)\in B^n$}.$$
A submodule $L\subset B^n$ is said to be 
$\O$-lattice if 
\begin{itemize}
\item $L$ is a $\Z$-lattice in $B^n$, hence $L\otimes_\Z\Q=B^n$;
\item $L$ is a left $\O$-module.
\end{itemize}
For an $\O$-lattice $L$, we define by $N(L)$ the two sided (fractional) ideal of $B$ generated by $(x,y),\ x,y\in L$. 
The ideal $N(L)$ is called the norm of $L$. 

For a commutative ring $R$, we extend the conjugation on $\O\subset B$ to 
$\O\otimes_\Z R$ by $\overline{x\otimes r}:=\overline{x}\otimes r$ for each $x\in \O$ and 
$r\in R$. Further, for each $\g=(\g_{ij})\in M_n(\O\otimes_\Z R)$ 
(the set of $n\times n$ matrices over $\O\otimes_\Z R$) we define $\overline{\g}:=
(\overline{\g}_{ij})$. 
We define the algebraic group $G_n$ over $\Z$ which represents the following functor  
from the category of rings to the category of sets: 
$$\underline{G}_n:(Rings)\lra (Sets),\ R\mapsto\underline{G}_n(R):=
\{\g\in M_n(\O\otimes_\Z R)\ |\ \g\cdot {}^t\overline{\g}=\nu(\g)I_n\ \text{for some $\nu(\g)\in R^\times$} \}$$
where $I_n$ stands for the identity matrix of size $n$. 
The similitude map $\nu:G_n\mapsto GL_1$ is defined by $\g\mapsto \nu(\g)$. 
Put $G^1_n:={\rm Ker}(\nu)$ as an algebraic group. 
The group scheme $G_n$ $($resp.\ $G^1_n)$ over $\Z$ is said to be the generalized unitary 
symplectic group $($unitary 
symplectic group$)$ and it is symbolically denoted by $G_n=GUSp_n$ $($resp.\ $G^1_n=USp_n)$. 
It is easy to see that $G_n(\R)$ is compact modulo center and $G^1_n(\R)$ is, in fact, 
compact, since $B$ is definite. By definition, $G_n$ $($resp.\ $G^1_n)$ 
is an inner form of $GSp_n$  $($resp.\ $Sp_n)$.  
\begin{remark}\label{Ibu-history}
Historically, automorphic forms on $G^1_n=USp_n$ for $n\ge 2$ were studied by 
Ihara-Ibukiyama $($see \cite{Ibu18}  and suitable references there$)$. 
After Ibukiyama's joint works with Ihara, he and his collaborators have pursued    
an analogue of Jacquet-Langlands correspondence for $GL_2$. 
\end{remark}
Let us keep introducing some notation. 
For two $\O$-lattices $L_1,L_2$ of $B^n$, they are said to be globally equivalent  
(locally equivalent at a rational prime $p$) if 
$L_1=L_2\g$ for some $\g\in G_n(\Q)$ ($L_1\otimes_\Z\Z_p=(L_2\otimes_\Z\Z_p)\g$  
for some $\g\in G_n(\Q_p)$).  
We also say $L_1$ and $L_2$ belong to the same genus if $L_1$ is 
locally equivalent to $L_2$ for each rational prime $p$. 
For each $\O$-lattice $L$, we denote by $\mL(L)$ the set of 
all $\O$-lattices $L_1$ such that $L_1$ and $L$ belong to the same genus. 
The set $\mL(L)$ is called a genus and we denote by $\mL(L)/\sim$ the 
set of globally equivalent classes of $\mL(L)$. 
\begin{Def}\label{pg}For each $\O$-lattice $L$, the cardinality $H(L)$ 
of $\mL(L)/\sim$ is called the class number of $L$. 
In particular, $\mL(\O^n)/\sim$ is said to be the principal genus class and put 
$$H_n(p,1):=H(\O^n)=|\mL(\O^n)/\sim|.$$
\end{Def}
Let $\A_\Q$ be the ring of adeles of $\Q$ and $\A_f$ be the finite part of $\A_\Q$. 
For an $\O$-lattice $L$ and each rational prime $p$, put $K_p(L):=
\{\gamma_p\in G_n(\Q_p)\ |\ (L\otimes_\Z\Z_p)\gamma_p=L\otimes_\Z\Z_p\}$ which is an open compact subgroup of 
$G_n(\Q_p)$. Then $K(L):=\ds\prod_{p}K_p(L)$ makes up an 
open compact subgroup of $G_n(\A_f)$. For each element 
$\gamma=(\gamma_p)_p$ of $G_n(\A_\Q)$ and an $\O$-lattice $L$, put 
$$L\gamma:=\bigcap_{p<\infty}L\gamma_p\cap B^n$$
and it is easy to see that $L\gamma$ is also an $\O$-lattice which is locally equivalent to 
$L$ at each prime $p$. Hence we have  
\begin{equation}\label{desc1}
K(L)\bs G_n(\A_f)/G_n(\Q)\stackrel{\sim}{\lra} \mL(L)/\sim,\ K(L)\gamma G_n(\Q)\mapsto  
[L\gamma]
\end{equation}
where $G(\Q)$ is diagonally embedded in $G_n(\A_f)$ as $h\mapsto (h)_p$.
As for the computation of the class number of the principal genus, 
the case of $n=1$ is due to Eichler \cite{Eichler} (see also Theorem 1.12, p.346 of 
\cite{Pizer-Algo}) and the case of $n=2$ is handled 
by Hashimoto-Ibukiyama \cite{HI}.
\subsection{Ibukiyama-Katsura-Oort-Serre's result in terms of adelic language}\label{iko}
Let us fix a prime $p$ and put $k=\bF_p$. 
For each positive integer, we denote by $SS_g(p)$ the set of all 
isomorphism classes of principally polarized abelian variety over $k$ of 
dimension $g$.  Henceforth we assume $g\ge 2$. 
According to \cite{IKO} we describe $SS_g(p)$ in terms of 
adelic language. 

Let $E_0$ be a supersingular elliptic curve over $k$. 
It is well-known that $B:={\rm End}(E_0)\otimes_\Z\Q$ is 
the definite quaternion algebra ramified only at $p$ and $\infty$ with 
a maximal order $\O={\rm End}(E_0)$. 
For each prime $q$, put $\O_q:=\O\otimes_\Z\Z_q$. 
 Put $A_0=E^g_0$ and 
define a divisor on $A_0$ by 
$$D:=\ds\sum_{i=0}^{g-1} E^i_0\times\{0_{E_0}\}\times E^{g-i-1}_0$$
where $0_{E_0}$ stands for the origin of $E_0$. 
By using suitable parallel transformations, it is easy to see that $(D^g)=g!$. 
It follows from (\ref{rrt}) that $\mL_0:=\O(D)$ is a principal polarization. 
Let us fix the principally polarized abelian variety $(A_0,\mL_0)$. 
Pick another principally polarized abelian variety $(A,\mL)$. 
As explained before, $A$ is isomorphic to $A_0$ and by pulling back $\mL$ to 
$A_0$, there is one to one correspondence between $SS_g(p)$ and the set $PP_g(A_0)$ of 
isomorphism classes of principal polarizations on $A_0$. Therefore, we have 
\begin{equation}\label{mor1}
SS_g(p)\simeq PP_g(A_0)\stackrel{\subset}{\hookrightarrow} {\rm NS}(A_0)  
\stackrel{j}{\hookrightarrow} {\rm End}(A_0)=M_g(\O),
\end{equation}
where $j$ is defined by $j(\mL)=\phi^{-1}_{\mL_0}\circ \phi_{\mL}$ for each class 
$[\mL]\in PP_g(A_0)$. By Proposition 2.8 of \cite{IKO}, 
the image of $SS_g(p)$ under the map (\ref{mor1}) is given by 
\begin{equation}\label{set1}
\{X\in GL_g(\O)\ |\ X={}^t\overline{X}>0\}.
\end{equation}
We remark that 
the Hauptnorm ${\rm HNm}$ in p.144 of \cite{IKO} is nothing 
but the reduced norm of $M_g(\O)$ and for $X\in M_g(\O)$, ${\rm HNm}(X)=1$ 
if and only if $X\in GL_g(\O)$. 
Pick $X$ from the set (\ref{set1}). By Lemma 2.4 of \cite{IKO}, for each prime $q$, 
there exists $\delta_q\in GL_g(\O_q)$ such that $X=\delta_q{}^t\overline{\delta}_q$. 
Consider an $\O$-lattice $L:=\ds\bigcap_q \O^g\delta_q\cap B^n$. 
By Corollary 2.2 of \cite{IKO}, there exists $\g\in GL_g(B)$ such that 
$L=\O^g\g$. Since $\O^g_q\g=L_q=\O^g_q \delta_q=\O^g_q$, 
$h:=\g{}^t \overline{\g}\in GL_g(\O)$ and clearly $h={}^t\overline{h}>0$. 
Therefore, by Lemma 2.3 of \cite{IKO}, we conclude that $[L]\in \mL(\O^g)/\sim$. 
It follows from Lemma 2.3 of \cite{IKO} again that the association from $X$ to $[L]$ is bijection.  
Summing up, we have the following: 
\begin{prop}\label{ikos}$($Ibukiyama-Katsura-Oort-Serre's theorem$)$ There is a one-to-one correspondence between 
$SS_g(p)$ and $ \mL(\O^g)/\sim$. 
\end{prop}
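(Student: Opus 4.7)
The plan is to build the claimed bijection as a composition of three natural maps, each justified by a result from \cite{IKO}:
\[
SS_g(p) \xrightarrow{\sim} PP_g(A_0) \hookrightarrow \{X \in GL_g(\O) : X = {}^t\overline{X} > 0\} \xrightarrow{\sim} \mathcal{L}(\O^g)/\sim.
\]
For the first arrow, I would use the fact (recalled in Section \ref{ssav}) that for $g \geq 2$ every superspecial abelian variety is isomorphic to $A_0 = E_0^g$. Sending $(A,\mL)$ to the isomorphism class of the pullback of $\mL$ to $A_0$ therefore gives a bijection onto $PP_g(A_0)$.

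Next, I would exhibit the embedding $j:PP_g(A_0) \hookrightarrow M_g(\O)$ given by $j([\mL]) = \phi_{\mL_0}^{-1}\circ \phi_{\mL}$, and invoke Proposition 2.8 of \cite{IKO} to identify its image with the positive definite quaternion-Hermitian invertible matrices over $\O$. Here I would note that the condition ${\rm HNm}(X)=1$ in \cite{IKO}, which is the reduced norm on $M_g(\O)$, is equivalent to $X \in GL_g(\O)$, so principality of the polarization is detected by unimodularity of the Hermitian matrix.

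Then for the third arrow I would carry out the construction sketched in the excerpt: given $X = {}^t\overline{X} > 0$ in $GL_g(\O)$, apply Lemma 2.4 of \cite{IKO} to produce a local factorization $X = \delta_q\,{}^t\overline{\delta}_q$ with $\delta_q \in GL_g(\O_q)$ at each prime $q$, and form the $\O$-lattice $L := \bigcap_q \O^g \delta_q \cap B^g$. Since $L \otimes \Z_q = \O_q^g \delta_q$ at every $q$, the lattice $L$ lies in the principal genus. For the inverse, given $[L]\in \mathcal{L}(\O^g)/\sim$, Corollary 2.2 of \cite{IKO} lets me write $L = \O^g\gamma$ for some $\gamma \in GL_g(B)$, and $h := \gamma\,{}^t\overline{\gamma}$ then sits in $GL_g(\O)$ and is positive Hermitian (the $GL_g(\O)$-integrality follows by comparing $\O_q^g \gamma = L_q = \O_q^g \delta_q$ locally).

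The main technical obstacle is the well-definedness and bijectivity at the level of equivalence classes: different choices of local $\delta_q$ or of global $\gamma$ must yield the same class $[L]$ respectively the same $X$, and the forward and backward constructions must be mutually inverse. Both reduce to Lemma 2.3 of \cite{IKO}, which characterizes when two positive Hermitian matrices in $GL_g(\O)$ correspond to globally equivalent principal-genus lattices. Once that lemma is in hand, chaining the three bijections yields $SS_g(p) \simeq \mathcal{L}(\O^g)/\sim$ as desired.
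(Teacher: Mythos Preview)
Your proposal is correct and follows essentially the same route as the paper: the same chain $SS_g(p)\simeq PP_g(A_0)\to\{X\in GL_g(\O):X={}^t\overline X>0\}\to\mL(\O^g)/\!\sim$, appealing to exactly the same ingredients from \cite{IKO} (Proposition~2.8 for the image, Lemma~2.4 for the local factorizations $X=\delta_q{}^t\overline{\delta}_q$, Corollary~2.2 for $L=\O^g\gamma$, and Lemma~2.3 for well-definedness and bijectivity). The only cosmetic difference is that you package the $\gamma$-construction as an explicit inverse map, whereas the paper uses it to verify $[L]\in\mL(\O^g)/\!\sim$ and then cites Lemma~2.3 of \cite{IKO} a second time to conclude bijectivity directly.
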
  
We denote by $Z_{G_g}\simeq GL_1$ the center of $G_g=GUSp_g$. 
Recall the open compact subgroup $K(\O^g)=\ds\prod_pK_p(\O^g)$ from (\ref{desc1}) 
for $L=\O^g$. For each prime $\ell\neq p$, put $K(\O^g)^{(\ell)}=\ds\prod_{p\neq \ell}K_p(\O^g)$. Clearly, $K(\O^g)=K(\O^g)^{(\ell)}\times G_g(\Z_\ell)$. 
We identify $B_\ell=B\otimes_{\Q}\Q_\ell$ (resp.\ $\O_\ell=\O\otimes_{\Z}\Z_\ell$) with $M_2(\Q_\ell)$ (resp.\ $M_2(\Z_\ell)$). 
Under this identification, we have $G_g(R)=GSp_g(R)$ for $R=\Z_\ell$ or $\Q_\ell$ 
(cf.\ Lemma 4 of \cite{Ghitza}). 
Therefore, for any subring $M$ of $\Q_\ell$, $G_g(M)$ is naturally identified with 
a subgroup of $G_g(\Q_\ell)=GSp_g(\Q_\ell)$ under the inclusion $M\subset \Q_\ell$.  
\begin{prop}\label{ikos-re} For each prime $\ell\neq p$, 
there is a one-to-one correspondence between 
$SS_g(p)$ and $$G_g(\Z[1/\ell])\bs GSp_g(\Q_\ell)/Z_{GSp_g}(\Q_\ell) GSp_g(\Z_\ell).$$ 
\end{prop}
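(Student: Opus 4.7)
The plan is to combine the adelic description (\ref{desc1}), applied to $L = \O^g$, with Proposition \ref{ikos}, which together yield
\[
SS_g(p) \stackrel{1:1}{\lra} K(\O^g) \bs G_g(\A_f) / G_g(\Q),
\]
and then to reduce the right-hand side to an $\ell$-adic double coset via strong approximation at the place $\ell$. Concretely, I would factor $K(\O^g) = K^\ell \times GSp_g(\Z_\ell)$ with $K^\ell := \ds\prod_{q \neq \ell} K_q(\O^g)$, using the identifications $G_g(\Q_\ell) = GSp_g(\Q_\ell)$ and $G_g(\Z_\ell) = GSp_g(\Z_\ell)$ recalled just before the proposition. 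Let $\iota : GSp_g(\Q_\ell) \hookrightarrow G_g(\A_f)$ denote the inclusion at the $\ell$-component (trivial at all other finite places), and consider the composition
\[
\Phi:\ GSp_g(\Q_\ell) \stackrel{\iota}{\lra} G_g(\A_f) \twoheadrightarrow K(\O^g) \bs G_g(\A_f) / G_g(\Q).
\]
I would then show that $\Phi$ is surjective and that two elements $g,g' \in GSp_g(\Q_\ell)$ have the same image if and only if $g' \in GSp_g(\Z_\ell) \cdot g \cdot G_g(\Z[1/\ell])$.

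For surjectivity of $\Phi$, I invoke strong approximation at the place $\ell$ for the simply connected semisimple $\Q$-group $G^1_g = USp_g$: since $USp_g(\Q_\ell) \cong Sp_g(\Q_\ell)$ is noncompact, the Kneser--Platonov theorem gives that the image of $USp_g(\Q)$ is dense in $USp_g(\A_f^{(\ell)})$. Combining this density with the surjectivity of the similitude character $\nu : G_g \twoheadrightarrow GL_1$ at every completion and the class-number-one property of $\Q$ (so that $\A_f^\times = \wh\Z^\times \cdot \Q^\times$), one obtains the decomposition
\[
G_g(\A_f) = K^\ell \cdot GSp_g(\Q_\ell) \cdot G_g(\Q),
\]
which is exactly surjectivity of $\Phi$. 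For the fibers, if $\Phi(g) = \Phi(g')$ then $\iota(g') = k \cdot \iota(g) \cdot \gamma$ for some $k = (k_q)_q \in K(\O^g)$ and $\gamma \in G_g(\Q)$; comparing the components at each $q \neq \ell$, where both $\iota(g)_q$ and $\iota(g')_q$ are trivial, forces $k_q = \gamma_q^{-1}$ with $\gamma_q$ the image of $\gamma$ in $G_g(\Q_q)$. Hence $\gamma$ lies in $G_g(\Q) \cap \ds\prod_{q \neq \ell} G_g(\Z_q) = G_g(\Z[1/\ell])$, and the $\ell$-component then reads $g' = k_\ell \cdot g \cdot \gamma$ with $k_\ell \in GSp_g(\Z_\ell)$, as required.

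This identifies the set of $\Phi$-fibers with $G_g(\Z[1/\ell]) \bs GSp_g(\Q_\ell) / GSp_g(\Z_\ell)$. To match the statement of the proposition, I would observe that the center is already absorbed into the right denominator: any $cI_{2g} \in Z_{GSp_g}(\Q_\ell)$ with $c = u\ell^n$, $u \in \Z_\ell^\times$ and $n \in \Z$, factors as $(\ell^n I_{2g}) \cdot (u I_{2g})$, with $\ell^n I_{2g} \in G_g(\Z[1/\ell])$ (since $\ell^n \in \Z_q^\times$ for every $q \neq \ell$) and $u I_{2g} \in GSp_g(\Z_\ell)$, so $Z_{GSp_g}(\Q_\ell) \subset G_g(\Z[1/\ell]) \cdot GSp_g(\Z_\ell)$ and enlarging the right denominator by $Z_{GSp_g}(\Q_\ell)$ leaves the double coset space unchanged. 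The main obstacle is the surjectivity step: $G_g$ itself is reductive but not semisimple, so Kneser--Platonov does not apply to $G_g$ directly, and one must carefully combine strong approximation for the derived subgroup $USp_g$ with the class-number-one property of $\Q$, via the similitude character, to obtain the decomposition of $G_g(\A_f)$.
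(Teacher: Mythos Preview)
Your proposal is correct and follows essentially the same route as the paper: both apply strong approximation to the simply connected derived group $G^1_g=USp_g$ at the place $\ell$, extend to $G_g$ via the similitude exact sequence together with $\A_f^\times=\wh\Z^\times\cdot\Q^\times$, obtain the decomposition $G_g(\A_f)=G_g(\Q)\cdot(K(\O^g)^{(\ell)}\times GSp_g(\Q_\ell))$, and identify the resulting stabilizer $K(\O^g)^{(\ell)}\cap G_g(\Q)=G_g(\Z[1/\ell])$ before absorbing the center via $\Q_\ell^\times=\Z[1/\ell]^\times\cdot\Z_\ell^\times$. The only cosmetic difference is that you phrase the passage from the adelic to the $\ell$-adic double quotient by explicitly introducing the map $\Phi$ and checking surjectivity and fibers, whereas the paper writes it as a single chain of equalities.
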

\begin{proof}For any algebraic closed field $F$, $G^1_g(F)=USp_g(F)=Sp_g(F)$. 
Since $Sp_g$ is simply connected as a group scheme over $\Z$, so is $G^1_g=USp_g$. 
Let $\A^{(\ell)}_f$ be the finite adeles of $\Q$ outside $\ell$. 
By the strong approximation theorem (cf.\ Theorem 7.12, p.427 in Section 7.4 of \cite{PR}) for $G^1_g$ with respect to $S=\{\infty,\ell\}$ and using the exact sequence 
$$1\lra G^1_g\lra G_g\stackrel{\nu}{\lra} GL_1\lra 1,$$
we have a decomposition 
\begin{equation}\label{sapprox1}
G_g(\A_f)=G_g(\A^{(\ell)}_f)\times G_g(\Q_\ell)=G_g(\Q)(K(\O^g)^{(\ell)}\times 
G_g(\Q_\ell)).
\end{equation}
Combining (\ref{desc1}) with (\ref{sapprox1}), we have 
\begin{eqnarray}\label{sapprox2}
\mL(\O^g)/\sim&\stackrel{\sim}{\lla}&K(\O^g)\bs G_g(\A_f)/G_g(\Q) \nonumber \\
&\simeq & G_g(\Q)\bs G_g(\A_f)/K(\O^g)\\
&=&G_g(\Q)\bs (G_g(\Q)(K(\O^g)^{(\ell)}\times 
G_g(\Q_\ell)))/K(\O^g) \nonumber \\
&=&G_g(\Z[1/\ell])\bs GSp_g(\Q_\ell)/
GSp_g(\Z_\ell) \nonumber \\
&=&G_g(\Z[1/\ell])\bs GSp_g(\Q_\ell)/Z_{GSp_g}(\Q_\ell) 
GSp_g(\Z_\ell). \nonumber 
\end{eqnarray}
We remark that at the last line $Z_{GSp_g}(\Q_\ell)$ is intentionally inserted due to the formulation 
in terms of Bruhat-Tits building handled later on. Further, the centers of 
$G_g(\Z[1/\ell])$ and $GSp_g(\Z_\ell)$ are $\Z[1/\ell]^\times$ and $\Z^\times_\ell$ respectively. The equality $\Z[1/\ell]^\times\Z^\times_\ell=\Q^\times_\ell$  explains how $Z_{GSp_g}(\Q_\ell)$ shows up there. 
We have also used $K(\O^g)^{(\ell)}\cap G_g(\Q)=G_g(\Z[1/\ell])$ to obtain 
the fourth line. 
\end{proof}
\subsection{Another formulation due to Jordan-Zaytman}\label{AFJZ}
Let $\ell\neq p$ be a prime. 
Both of $SS_g(p)$ and the Bruhat-Tits building $GSp_g(\Q_\ell)/Z_{GSp_g}(\Q_\ell) 
GSp_g(\Z_\ell)$ endowed with Hecke theory at $\ell$. However, it is not transparent 
to see the compatibility of Hecke actions on both sides under the one-to-one correspondence (\ref{sapprox2}). To overcome this, due to Jordan-Zaytman \cite{JZ}, we use another formulation of $SS_g(p)$ and its connection to $SS_g(p,\ell,A_0,\mL_0)$ by using 
$\ell$-adic Tate modules. 

Pick $(A,\mL)$ from a class in $SS_g(p)$. For a positive integer $n$, 
let 
\[
A[\ell^n]:=\{P\in A(\bF_p)\ |\ \ell^n P=0_A\}\simeq (\Z/\ell^n \Z)^{\oplus 2g}
\] 
and 
put $A[\ell^\infty]=\ds\bigcup_{n\ge 1}A[\ell^n]$. 
We denote by $T_\ell(A)$ the $\ell$-adic Tate module and 
 by $V_\ell(A):=T_\ell(A)\otimes_{\Z_\ell}\Q_\ell$ the $\ell$-adic rational Tate module 
 (cf. Section 18 of Chapter IV of \cite{Mum}).  
 Let us define the coefficient ring $R_V$ to be $\Z/\ell^n\Z$ if $V=A[\ell^n]$, $\Z_\ell$ if $V=T_\ell(A)$, and 
 $\Q_\ell$ if $V=V_\ell(A)$. 
The principal polarization $\phi_\mL:A\stackrel{\sim}{\lra}\wA$ yields 
$V\simeq V^\ast={\rm Hom}_{R_V}(V,R_V)$
and it induces a non-degenerate alternating pairing 
$$\langle \ast,\ast \rangle:V\times V\lra R_V.$$
Let $C$ be a maximal isotropic subgroup of $A[\ell^n]$ for some $n\ge 1$. 
Consider the exact sequence 
\[
0\lra T_\ell(A)\stackrel{\subset}{\lra} V_\ell(A)\stackrel{\pi}{\lra}V_\ell(A)/T_\ell(A)\simeq A[\ell^\infty]
\lra 0.
\]
Then, $T_C:=\pi^{-1}(C)$ is a lattice of $V_\ell(A)$. The quotient $A_C:=A/C$ is also 
a superspecial abelain variety and the line bundle $\mL$ is uniquely descend 
to a principal polarization $\mL_C$ on $A_C$ by 
Corollary of Theorem 2 in Section 23 of Chapter IV of \cite{Mum} 
(see also Proposition 11.25 of \cite{vdGM} for the uniqueness). 
It follows from this that $T_C\simeq T_\ell(A_C)$ has a symplectic $\Z_\ell$-basis $\{f_{C,i}\}_{i=1}^{2g}\subset \Q^{2g}_\ell$ which means the matrix $P_C:=(f_{C,1},\ldots,f_{C,2g})\in M_{2g}(\Q_\ell)$ belongs to $GSp_g(\Q_\ell)$. 
Another choice of a symplectic $\Z_\ell$-basis of $T_C$ yields $P_C\g$ for 
some $\g\in GSp_g(\Z_\ell)$. 
For each $h\in {\rm End}(A)\otimes_\Z \Z[1/\ell]$ which is invertible 
(hence $h$ is an isogeny of degree a power of $\ell$), 
we see easily that $P_{h(C)}=h^\ast P_C$ 
where $h^\ast$ is the endomorphism of $V_\ell(A)$ induced from $h$. 
In fact, it follows from the functorial property of the pairing (see p.228 of \cite{Mum}). 
We identify $G_g(\Z[1/\ell])$ with 
\begin{equation}\label{dagger}
\G(A)^\dagger:=\{f\in ({\rm End}(A)\otimes_\Z \Z[1/\ell])^\times\ |\ f\circ f^\dagger=
f^\dagger\circ f\in \Z[1/\ell]^\times {\rm id}_A\}
\end{equation}
under the natural inclusion $({\rm End}(A)\otimes_\Z \Z[1/\ell])^\times\hookrightarrow 
{\rm Aut}((V_\ell(A),\langle\ast,\ast\rangle))=GSp_g(\Q_\ell)$.

Fix $(A,\mL)$ in a class of $SS_g(p)$. 
We introduce the following sets which play an important role in the 
construction of the isogeny graphs: 
\begin{equation}\label{iso-ell}
{\rm Iso}_{\ell^\infty}(A,\mL):=\{ [(A_C,\mL_C)]\in SS_g(p)  \ |\ n\ge 1,\ C\subset A[\ell^n]:\text{a maximal isotropic subgroup}\}.
\end{equation}
and 
\begin{equation}\label{iso-mark}
SS_g(p,\ell,A,\mL):=\{ [(B,\mM,\phi_B)] \ |\ [(B,\mM)]\in SS_g(p)\}
\end{equation}
where $\phi_B:A\lra B$ is an $\ell$-marking and 
$ [(B,\mM,\phi_B)] $ stands for the equivalent class of $(B,\mM,\phi_B)$. 
Here such two objects  $(A_1,\mL_1,\phi_{A_1})$ and $(A_2,\mL_2,\phi_{A_2})$ are
said to be equivalent if there exists an isomorphism $f:(A_1,\mL_1)\lra (A_2,\mL_2)$ such that 
$f\circ \phi_{A_1}$ and $\phi_{A_2}$ differ by only an element in $\G(A_1)^\dagger$. 
By definition, the natural map from $SS_g(p,\ell,A,\mL)$ to  ${\rm Iso}_{\ell^\infty}(A,\mL)$ 
is surjective while ${\rm Iso}_{\ell^\infty}(A)$ is included in 
$SS_g(p)$.
With the above observation, we have obtained a map 
\begin{equation}\label{another-desc}
{\rm Iso}_{\ell^\infty}(A,\mL)\lra 
G_g(\Z[1/\ell])\bs GSp_g(\Q_\ell)/GSp_g(\Z_\ell),\ 
 [(A_C,\mL_C)] \mapsto 
G_g(\Z[1/\ell])P_C GSp_g(\Z_\ell)
\end{equation}
We then show a slightly modified version of Jordan-Zaytman's theorem, Theorem 46 of \cite{JZ} 
in conjunction with $SS_g(p,\ell,A,\mL)$. 
\begin{thm}\label{JZ} 
Fix $(A,\mL)$ in a class of $SS_g(p)$. 
Keep the notation being as above.  It holds that 
${\rm Iso}_{\ell^\infty}(A,\mL)=SS_g(p)$ and the map $($\ref{another-desc}$)$ induces
 a bijection 
$${\rm Iso}_{\ell^\infty}(A,\mL) 
\stackrel{\sim}{\lra} G_g(\Z[1/\ell])\bs GSp_g(\Q_\ell)/GSp_g(\Z_\ell)=G_g(\Z[1/\ell])\bs GSp_g(\Q_\ell)/Z_{GSp_g}(\Q_\ell) GSp_g(\Z_\ell).$$ 
Further, the natural map $SS_g(p,\ell,A,\mL)\lra {\rm Iso}_{\ell^\infty}(A,\mL)$ is also bijective. 
\end{thm}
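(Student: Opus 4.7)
The plan is to derive the theorem by combining Jordan--Zaytman's Theorem 46 of \cite{JZ} with Proposition \ref{ikos-re}, and then to handle the new content involving $SS_g(p,\ell,A,\mL)$ via Proposition \ref{diff-markings}. I would break the statement into four assertions: (a) the map $(\ref{another-desc})$ is a bijection onto $G_g(\Z[1/\ell])\bs GSp_g(\Q_\ell)/GSp_g(\Z_\ell)$; (b) the two double coset descriptions in the statement coincide; (c) ${\rm Iso}_{\ell^\infty}(A,\mL)=SS_g(p)$; and (d) the forgetful map $SS_g(p,\ell,A,\mL)\stackrel{\sim}{\lra} {\rm Iso}_{\ell^\infty}(A,\mL)$ is bijective.

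For (a), I would follow the Jordan--Zaytman construction. Well-definedness: another symplectic $\Z_\ell$-basis of $T_C$ changes $P_C$ by right multiplication by an element of $GSp_g(\Z_\ell)$, while an isomorphism $(A_{C_1},\mL_{C_1})\simeq(A_{C_2},\mL_{C_2})$ is witnessed via the Tate modules by an element of $\G(A)^\dagger=G_g(\Z[1/\ell])$ acting on $V_\ell(A)$, which multiplies $P_C$ on the left. Injectivity reverses this identification. Surjectivity is the technical core: starting from $g\in GSp_g(\Q_\ell)$, the lattice $L=g\cdot T_\ell(A)\subset V_\ell(A)$ is self-dual up to a scalar; using the freedom to rescale by $Z_{GSp_g}(\Q_\ell)$ and to act on the left by $G_g(\Z[1/\ell])$, one arranges $T_\ell(A)\subset L$ with $L$ honestly self-dual for the Weil pairing, so that $C:=L/T_\ell(A)\subset A[\ell^n]$ is maximal isotropic and $[(A_C,\mL_C)]$ realizes the prescribed double coset. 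For (b), since $Z_{GSp_g}(\Q_\ell)=\Q_\ell^\times \cdot I$ and $\Q_\ell^\times=\Z[1/\ell]^\times\Z_\ell^\times$, the center already lies inside $G_g(\Z[1/\ell])\cdot GSp_g(\Z_\ell)$, so inserting it changes nothing. Claim (c) then follows because Proposition \ref{ikos-re} identifies $SS_g(p)$ with the same double coset space, and ${\rm Iso}_{\ell^\infty}(A,\mL)\subset SS_g(p)$ by construction, so the two finite sets must coincide by cardinality.

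For (d), the forgetful map is surjective by construction: given $[(A_C,\mL_C)]\in {\rm Iso}_{\ell^\infty}(A,\mL)$, the canonical quotient $f_C:A\to A_C$ satisfies $f_C^\ast \mL_C=\mL^{\otimes \ell^n}$ by Proposition \ref{uni-1} and is therefore an $\ell$-marking, so $[(A_C,\mL_C,f_C)]$ lifts $[(A_C,\mL_C)]$. For injectivity, two lifts $(B_i,\mM_i,\phi_i)$ with the same image admit an isomorphism $h:(B_1,\mM_1)\stackrel{\sim}{\lra}(B_2,\mM_2)$ of principally polarized abelian varieties; then both $h\circ\phi_1$ and $\phi_2$ are $\ell$-markings of $(B_2,\mM_2)$ from $(A,\mL)$, and Proposition \ref{diff-markings} produces $\psi\in\G(A)^\dagger$ with $h\circ\phi_1=\phi_2\circ\psi$, which is exactly the equivalence relation defining $SS_g(p,\ell,A,\mL)$. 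The main technical obstacle is the surjectivity in (a): matching self-dual lattices in $V_\ell(A)$ with maximal isotropic subgroups of $A[\ell^n]$ over varying $n$ requires both the central rescaling and the left $\G(A)^\dagger$-action simultaneously, which is precisely why the reformulation in (b) (including the factor $Z_{GSp_g}(\Q_\ell)$) is genuinely useful rather than cosmetic, and why Proposition \ref{diff-markings} is indispensable for comparing $\ell$-markings.
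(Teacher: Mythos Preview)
Your proposal is correct and follows essentially the same route as the paper: surjectivity of $(\ref{another-desc})$ via the lattice/quotient construction, the cardinality match through Proposition~\ref{ikos-re} to force ${\rm Iso}_{\ell^\infty}(A,\mL)=SS_g(p)$, and Proposition~\ref{diff-markings} to compare $\ell$-markings for part~(d). The only economy you miss is that the paper never checks injectivity of $(\ref{another-desc})$ directly---once surjectivity is established, the sandwich $|SS_g(p)|=|G_g(\Z[1/\ell])\backslash GSp_g(\Q_\ell)/GSp_g(\Z_\ell)|\le |{\rm Iso}_{\ell^\infty}(A,\mL)|\le |SS_g(p)|$ forces bijectivity and the set equality simultaneously, so your separate injectivity argument in~(a) is redundant.
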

\begin{proof}
Surjectivity of (\ref{another-desc}) follows in reverse from the construction by using 
Corollary of Theorem 2 in Section 23 of Chapter IV of \cite{Mum} to guarantee 
the existence of a principal polarization. 
By Proposition \ref{ikos-re} and ${\rm Iso}_{\ell^\infty}(A,\mL)\subset SS_g(p)$, we have 
$$|SS_g(p)|=|G_g(\Z[1/\ell])\bs GSp_g(\Q_\ell)/Z_{GSp_g}(\Q_\ell) GSp_g(\Z_\ell)|\le 
|{\rm Iso}_{\ell^\infty}(A,\mL)| \le |SS_g(p)|$$
and it yields first two claims. With a natural surjection $SS_g(p,\ell,A, \mL)\lra
 {\rm Iso}_{\ell^\infty}(A,\mL)$ and (\ref{another-desc}), we have a surjective map 
$$SS_g(p,\ell,A,\mL)
\lra G_g(\Z[1/\ell])\bs GSp_g(\Q_\ell)/Z_{GSp_g}(\Q_\ell) GSp_g(\Z_\ell).$$
However, by construction and the identification  $({\rm End}(A)\otimes_\Z \Z[1/\ell])^\times=
G_g(\Z[1/\ell])$, two objects of $SS_g(p,\ell,A,\mL)$ which go to one element in 
the target differ by only $\ell$-markings. Therefore, the above map is bijective. 
Hence, $SS_g(p,\ell,A,\mL)\stackrel{\sim}{\lra}{\rm Iso}_{\ell^\infty}(A,\mL) =SS_g(p)$. 

Note that the factor $Z_{GSp_g}(\Q_\ell)\simeq \Q^\times_\ell $ is intentionally 
inserted in front of $GSp_g(\Z_\ell)$ as explained in the proof of Proposition \ref{ikos-re}. 
\end{proof}

As a byproduct we have 
\begin{cor}\label{connected-ness}Let $\ell$ be a prime different from $p$. 
Let $\Gc^{SS}_{g}(\ell,p)$ is the isogeny graph defined in Section \ref{intro}. 
Then, $\Gc^{SS}_{g}(\ell,p)$ is a connected graph. 
\end{cor}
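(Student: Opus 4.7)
The plan is to reduce the connectedness of $\Gc^{SS}_{g}(\ell,p)$ to the (well-known) connectedness of the special 1-complex $\mathcal{S}_g$ inside the Bruhat-Tits building of $PGSp_g(\Q_\ell)$, via the bijection already established in Theorem \ref{JZ}.

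First, I would recall that Theorem \ref{JZ} yields a bijection
\[
V(\Gc^{SS}_g(\ell,p))=SS_g(p,\ell,A_0,\mL_0)\stackrel{\sim}{\lra} G_g(\Z[1/\ell])\bs GSp_g(\Q_\ell)/Z_{GSp_g}(\Q_\ell)GSp_g(\Z_\ell),
\]
sending a marked object $[(A,\mL,\phi_A)]$ to the class of a matrix $P_C\in GSp_g(\Q_\ell)$ representing a symplectic $\Z_\ell$-basis of the lattice $T_C\subset V_\ell(A_0)$ determined by the kernel of $\phi_A$. The right-hand side is exactly $\Gamma\bs\mathcal{S}_g$, where $\mathcal{S}_g$ is the vertex set (i.e., hyperspecial vertices) of the Bruhat-Tits building of $PGSp_g(\Q_\ell)$ and $\Gamma$ is the image of $G_g(\Z[1/\ell])$.

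Next, I would match the edges. An $(\ell)^g$-isogeny $(A_1,\mL_1)\to(A_2,\mL_2)$ with kernel $C\subset A_1[\ell]$ a Lagrangian corresponds, via the Tate module identification, to an inclusion of lattices $T_\ell(A_1)\subset T_\ell(A_2)\subset \ell^{-1}T_\ell(A_1)$ such that $T_\ell(A_2)/T_\ell(A_1)$ is a Lagrangian in $\ell^{-1}T_\ell(A_1)/T_\ell(A_1)$ with respect to the induced symplectic form. This is precisely the neighbor relation that defines the 1-skeleton $\mathcal{S}_g$. Together with compatibility of the markings modulo $\G(A_0)^\dagger=G_g(\Z[1/\ell])$, this promotes the bijection of vertices to an identification of the directed graph $\Gc^{SS}_g(\ell,p)$ with $\Gamma\bs\mathcal{S}_g$ (multiplicities aside).

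Now I would invoke the connectedness of $\mathcal{S}_g$: any two hyperspecial lattices can be joined by a sequence of Lagrangian neighbors, since inside a standard apartment the elementary divisor calculus gives an explicit chain, and any two vertices of the building lie in a common apartment. Hence $\Gamma\bs\mathcal{S}_g$, being a quotient of a connected graph, is connected, giving connectedness of the underlying undirected graph of $\Gc^{SS}_g(\ell,p)$. Finally, Proposition \ref{dual-marking} shows every $(\ell)^g$-isogeny admits a dual $(\ell)^g$-isogeny in the opposite direction, so the directed graph is strongly connected.

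The main technical point, rather than an obstacle, is verifying the edge-compatibility in the middle step: that under the Tate-module dictionary, an $(\ell)^g$-isogeny between marked objects in $SS_g(p,\ell,A_0,\mL_0)$ is exactly a Lagrangian-neighbor move in $\mathcal{S}_g$ respecting the $G_g(\Z[1/\ell])$-action. This is essentially built into the construction of $P_C$ in Section \ref{AFJZ}, together with the functoriality $P_{h(C)}=h^\ast P_C$ already noted there, so no genuinely new input is needed.
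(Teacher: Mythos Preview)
Your proof is correct, but it takes a different and more circuitous route than the paper's.

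The paper's argument stays entirely on the abelian-variety side and avoids the building altogether. From Theorem \ref{JZ} one has $SS_g(p,\ell,A_0,\mL_0)\stackrel{\sim}{\to}{\rm Iso}_{\ell^\infty}(A_0,\mL_0)=SS_g(p)$, which says directly that every class in $SS_g(p)$ is reached from $(A_0,\mL_0)$ by an isogeny of $\ell$-power degree whose kernel is maximal isotropic in some $A_0[\ell^n]$. Theorem 34 of \cite{JZ} then factors any such isogeny as a composition of $(\ell)^g$-isogenies, which immediately gives a path in $\Gc^{SS}_g(\ell,p)$ from the base vertex to any other vertex. That is the whole proof.

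Your approach instead pushes everything through the bijection with $\Gamma\backslash\mathcal{S}_g$, matches edges with Lagrangian-neighbor moves, and then appeals to the connectedness of the special $1$-complex $\mathcal{S}_g$. This is valid, but note that in the paper's logical order it forward-references material developed only later: the edge-compatibility is the content of Theorem \ref{Rel-HP} and Theorem \ref{comparison}, and the connectedness of $\mathcal{S}_g$ is Proposition \ref{Prop:connected}. What your route buys is a conceptual explanation tying connectedness to the geometry of the building; what the paper's route buys is brevity and independence from the building machinery, relying only on the factorization result already cited from \cite{JZ}.
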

\begin{proof}
By the proof of Theorem \ref{JZ}, we have 
$SS_g(p,\ell,A,\mL)\stackrel{\sim}{\lra}{\rm Iso}_{\ell^\infty}(A,\mL) =SS_g(p)$ for any fixed $(A,\mL)$ in a class of $SS_g(p)$. 
This means that any two classes are connected by isogenies of degree a power of 
$\ell$ and such an isogeny can be written as a composition of some $(\ell)^g$-isogenies 
by Theorem 34 of \cite{JZ}. 
This shows the claim. 
\end{proof}

\subsection{The Hecke operator at $\ell$}\label{hecke-at-ell}
Finally we discuss a relation of the map (\ref{another-desc}) with the 
Hecke operator at $\ell$. 
We refer Section 3 in Chapter VII of \cite{CF} for general facts and Section 16 through 19 of 
\cite{vdGeer} as a reader's friendly reference. 
For each prime $\ell$ different from $p$ and a class $[(A,\mL,\phi_A)]\in 
SS_g(p,\ell,A_0,\mL_0)$, we define 
the (geometric) Hecke correspondences $T(\ell)^{{\rm geo}}_{(A_0,\mL_0)}$ at $\ell$: 
\begin{equation}\label{geo-Hecke}
T(\ell)^{{\rm geo}}_{(A_0,\mL_0)}([(A,\mL,\phi_A)]):=\sum_{C\subset A[\ell]\atop 
\text{maximal isotropic}}[(A_C,\mL_C,f_C\circ \phi_A)]. 
\end{equation}
where $f_C:A\lra A_C$ is the natural projection. Similarly, we also define 
the (geometric) Hecke correspondences $T(\ell)^{{\rm geo}}$ at $\ell$ on $SS_g(p)$: 
\begin{equation}\label{geo-Hecke1}
T(\ell)^{{\rm geo}}([(A,\mL)]):=\sum_{C\subset A[\ell]\atop 
\text{maximal isotropic}}[(A_C,\mL_C)]. 
\end{equation}

Recall $GSp_g(\Q_\ell)=GSp(\Q^{2g}_\ell,\ \langle \ast,\ast \rangle)$ where 
$\langle \ast,\ast \rangle$ is the standard symplectic pairing on $\Q^{2g}_\ell\times
\Q^{2g}_\ell$. Put $V=\Q^{2g}_\ell$. 
As seen before, each element of $GSp_g(\Q_\ell)/GSp_g(\Z_\ell)$ can be regarded as a lattice 
$L$ of $V$ such that $\langle \ast,\ast \rangle_{L\times L}$ gives a $\Z_\ell$-integral  
symplectic structure on $L$. 
Using this interpretation, 
 each element of $GSp_g(\Q_\ell)/Z_{GSp_g}(\Q_\ell) GSp_g(\Z_\ell)$ can be regard as 
 a homothety class $[L]$ for such an $L$. 
For each $L$ being as above, we define the Hecke correspondence on 
$GSp_g(\Q_\ell)/GSp_g(\Z_\ell)$ at $\ell$
\begin{equation}\label{Hecke-ope}
T(\ell)([L]):=\sum_{L\subset L_1 \subset \ell^{-1}L  \atop  
L_1/L\text{:maximal isotropic}}[L_1]
\end{equation}
where $L_1$ runs over all lattice enjoying 
$L\subset L_1 \subset \ell^{-1}L$ as denoted and that 
$L_1/L$ is a maximal isotropic subgroup of $\ell^{-1}L/L$ 
with respect to the symplectic pairing $\langle \ast,\ast \rangle_{\ell^{-1}L/L\times\ell^{-1}L/L}$. 
Clearly, the action of $G_g(\Z[1/\ell])$ (given by multiplication from the left) on lattices are equivariant under 
$T(\ell)$. Therefore, it also induces a correspondence on 
$G_g(\Z[1/\ell])\bs GSp_g(\Q_\ell)/Z_{GSp_g}(\Q_\ell) GSp_g(\Z_\ell)$ 
and by abusing notation, we denote it by $T(\ell)$. 
For a set $X$, we write ${\rm Div}(X)_\Z:=\bigoplus_{P\in X}\Z P$. 
The identification (\ref{another-desc}) with the bijection 
\begin{equation}\label{forget}
SS_g(p,\ell,A_0,\mL_0)\stackrel{\sim}{\lra}SS_g(p),\ [(A,\mL,\phi_A)]\mapsto [(A,\mL)]
\end{equation} 
yields a bijection
\begin{equation}\label{another-desc1}
SS_g(p,\ell,A_0,\mL_0)\lra 
G_g(\Z[1/\ell])\bs GSp_g(\Q_\ell)/GSp_g(\Z_\ell). 
\end{equation}
Then we have obtained the following:
\begin{thm}\label{Rel-HP}The following diagram is commutative:

\[
  \begin{CD}
   {\rm Div}(SS_g(p))_\Z @<{(\ref{forget})\atop \sim}<<   {\rm Div}(SS_g(p,\ell,A_0,\mL_0))_\Z 
   @>{(\ref{another-desc1})\atop \sim}>>  
   {\rm Div}( G_g(\Z[1/\ell])\bs GSp_g(\Q_\ell)/Z_{GSp_g}(\Q_\ell) GSp_g(\Z_\ell))_\Z\\
  @V{T(\ell)^{{\rm geo}}}VV  @V{T(\ell)^{{\rm geo}}_{(A_0,\mL_0)}}VV    @V{T(\ell)}VV \\
   {\rm Div}(SS_g(p))_\Z @<{(\ref{forget})\atop \sim}<<    {\rm Div}(SS_g(p,\ell,A_0,\mL_0))_\Z   @>{(\ref{another-desc1})\atop \sim}>>   {\rm Div}(G_g(\Z[1/\ell])\bs GSp_g(\Q_\ell)/Z_{GSp_g}(\Q_\ell) 
      GSp_g(\Z_\ell))_\Z.
  \end{CD}
\]
\end{thm}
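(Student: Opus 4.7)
My plan is to verify the two squares independently. The left square is essentially tautological: the forgetful map $[(A,\mL,\phi_A)] \mapsto [(A,\mL)]$ in $(\ref{forget})$ is equivariant because both Hecke operators index their sums by the same set of maximal totally isotropic subgroups $C \subset A[\ell]$. The marked sum $\sum_C [(A_C,\mL_C, f_C\circ\phi_A)]$ maps term-by-term to $\sum_C [(A_C,\mL_C)]=T(\ell)^{\mathrm{geo}}([(A,\mL)])$, so no further argument is needed.

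The right square is the substantive content. Given $[(A,\mL,\phi_A)] \in SS_g(p,\ell,A_0,\mL_0)$, Theorem $\ref{JZ}$ produces a uniquely determined (up to equivalence) maximal totally isotropic $C \subset A_0[\ell^n]$ with $(A,\mL) \cong (A_{0,C},\mL_{0,C})$, represented by the lattice $T_C = \pi^{-1}(C) \subset V_\ell(A_0)$ containing $T_\ell(A_0)$. Applying $T(\ell)^{\mathrm{geo}}_{(A_0,\mL_0)}$ yields the sum over maximal totally isotropic $C' \subset A_{0,C}[\ell]$. Setting $\tilde{C} := f_C^{-1}(C') \subset A_0[\ell^{n+1}]$, a direct check gives $A_0/\tilde{C} \cong A_{0,C}/C'$, so the image lattice is $T_{\tilde C}$, which satisfies $T_C \subset T_{\tilde C} \subset \ell^{-1} T_C$ with $T_{\tilde C}/T_C \cong C'$. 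The map $C' \mapsto T_{\tilde C}$ is a bijection onto the lattices $L_1$ with $T_C \subset L_1 \subset \ell^{-1}T_C$ and $L_1/T_C$ maximal totally isotropic in $\ell^{-1}T_C/T_C$: indeed, under the natural identification $\ell^{-1}T_C/T_C \cong A_{0,C}[\ell]$ induced by $V_\ell(f_C)$, the two notions of maximal isotropy coincide by the functoriality of the Weil pairing, using that $f_C^{\ast}\mL_{0,C} = \mL_0^{\otimes\ell^n}$ from Proposition $\ref{uni-1}$ only rescales and hence preserves isotropy. Summing term-by-term matches $T(\ell)^{\mathrm{geo}}_{(A_0,\mL_0)}([(A,\mL,\phi_A)])$ with $T(\ell)([T_C])$ as defined in $(\ref{Hecke-ope})$.

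I do not anticipate a serious obstacle; the proof is a matter of unwinding definitions and invoking naturality of Tate modules and Weil pairings. The one subtlety requiring care is the central factor $Z_{GSp_g}(\Q_\ell)$: it identifies homothetic lattices $L$ and $\ell L$, which on the isogeny side corresponds to replacing $\phi_A$ by $[\ell]\circ\phi_A$; since $[\ell] \in \G(A_0)^\dagger$ from $(\ref{dagger})$, this gives an equivalent marked object in $SS_g(p,\ell,A_0,\mL_0)$, so the Hecke action descends well-defined to the homothety-class quotient. A cardinality check---both sums have $N_g(\ell) = \prod_{k=1}^g(\ell^k+1)$ terms---provides a useful sanity test and confirms that the term-by-term bijection above is exhaustive.
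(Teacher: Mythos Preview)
Your proposal is correct and follows essentially the same route as the paper, which states Theorem~\ref{Rel-HP} as an immediate consequence of the preceding constructions (``Then we have obtained the following'') without a separate proof. You have simply made explicit what the paper leaves implicit: the left square is tautological, and the right square reduces to the bijection between maximal isotropic $C'\subset A_{0,C}[\ell]$ and lattices $L_1$ with $T_C\subset L_1\subset \ell^{-1}T_C$ via $C'\mapsto T_{\tilde C}$, together with the functoriality of the Weil pairing.
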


\subsection{The Hecke action and automorphisms}
In this subsection we describe the behavior of the Hecke action of $T(\ell)$ on 
the finite set $$G_g(\Z[1/\ell])\bs GSp_g(\Q_\ell)/GSp_g(\Z_\ell)=
G_g(\Z[1/\ell])\bs GSp_g(\Q_\ell)/Z_{GSp_g}(\Q_\ell)GSp_g(\Z_\ell)$$ in terms of 
automorphism groups of objects in $SS_g(p,\ell,A_0,\mL_0)$. 

Put $\Gamma=G_g(\Z[1/\ell]),\ G=GSp_g(\Q_\ell)$, $Z=Z_{GSp_g}(\Q_\ell)$ and $K=GSp_g(\Z_\ell)$ for simplicity.
We write 
$$\G\bs G/K=\{\G x_1ZK,\ldots, \G x_h ZK \},\ x_1,\ldots x_h\in G$$
where $h=h_g(p,1)=|\G\bs G/ZK|$. 
For each $i\in \{1,\ldots,h\}$, the coset 
$\G x_i ZK$ is naturally identified with 
$$\G/\G\cap x_i ZK x^{-1}_i=(\G Z/Z)/((\G\cap x_i ZK x^{-1}_i)Z/Z).$$
\begin{lem}\label{auto}Keep the notation being as above. 
Let $(A_i,\mL_i,\phi_{A_i})$ be an element in the class 
corresponding to $\G x_i K$.
There is a natural group isomorphism between 
$\widetilde{\G}_i:=(\G\cap x_i ZK x^{-1}_i)Z/Z$ and ${\rm Aut}((A_i,\mL_i))/\{\pm 1\}$ 
where ${\rm Aut}((A_i,\mL_i))$ is the group of automorphisms of $(A_i,\mL_i)$. 
\end{lem}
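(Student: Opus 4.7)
The plan is to construct the isomorphism explicitly via the $\ell$-adic Tate module, mirroring the setup in the proof of Theorem \ref{JZ}. For $f\in {\rm Aut}((A_i,\mL_i))$, the quasi-isogeny $\phi_{A_i}^{-1}\circ f\circ \phi_{A_i}$ lies in $({\rm End}(A_0)\otimes_\Z \Z[1/\ell])^\times$; because $f$ preserves $\mL_i$ while $\phi_{A_i}^\ast \mL_i=\mL_0^{\otimes \ell^m}$ differs from $\mL_0$ only by a scalar factor, it satisfies the Rosati condition and hence lies in $\G(A_0)^\dagger=\G$. Acting on $V_\ell(A_0)\simeq \Q_\ell^{2g}$ via the symplectic basis fixed in Theorem \ref{JZ}, this element preserves the lattice $T_\ell(A_i)\simeq L_i=x_i\Z_\ell^{2g}$, so it lies in $\G\cap x_iKx_i^{-1}\subset \G\cap x_iZKx_i^{-1}$. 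Composing with the projection to $\widetilde{\G}_i$ yields a natural homomorphism $\Phi_i:{\rm Aut}((A_i,\mL_i))\to \widetilde{\G}_i$, which I claim descends to an isomorphism on the $\{\pm 1\}$-quotient.

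For the kernel: if $\Phi_i(f)$ is trivial in $\widetilde{\G}_i$, then $\phi_{A_i}^{-1}\circ f\circ \phi_{A_i}\in Z\cap \G=\Z[1/\ell]^\times\cdot {\rm id}$, so $f$ acts on $V_\ell(A_i)$ as a rational scalar $\pm\ell^n$. By faithfulness of the $\ell$-adic Tate representation for $\ell\ne p$, we have $f=\pm\ell^n\cdot {\rm id}$ in ${\rm End}(A_i)\otimes \Q$, and the requirement that $f$ be an invertible integral endomorphism forces $n=0$, hence $f=\pm 1$. This gives the injection $\overline{\Phi}_i:{\rm Aut}((A_i,\mL_i))/\{\pm 1\}\hookrightarrow \widetilde{\G}_i$.

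For surjectivity, I would take any representative $\gamma\in \G\cap x_iZKx_i^{-1}$ and write $\gamma=x_izkx_i^{-1}$ with $z=cI\in Z$ and $k\in K$; then $\gamma L_i=cL_i$ and $\nu(\gamma)=c^2\nu(k)$ with $\nu(k)\in \Z_\ell^\times$. Since $Z\cap \G=\Z[1/\ell]^\times\cdot {\rm id}$, replacing $\gamma$ by $\ell^n\gamma$ with $n=-v_\ell(c)$ (which preserves its class in $\widetilde{\G}_i$) simultaneously normalizes $\gamma L_i=L_i$ and puts $\nu(\gamma)\in \Z_\ell^\times\cap \Z[1/\ell]^\times=\{\pm 1\}$; positivity of the Rosati norm $\gamma\gamma^\dagger=\nu(\gamma)\cdot {\rm id}$ then forces $\nu(\gamma)=1$. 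The resulting $f:=\phi_{A_i}\circ \gamma\circ \phi_{A_i}^{-1}$ is integral at $\ell$ by construction, and integral at all primes $q\ne p,\ell$ because $\gamma\in {\rm End}(A_0)\otimes \Z_q$ and $\phi_{A_i}$ has $\ell$-power degree, hence $f\in {\rm End}(A_i)$; the identity $ff^\dagger=\nu(\gamma)=1$ then gives $f\in {\rm Aut}((A_i,\mL_i))$ with $\overline{\Phi}_i([f])=[\gamma]$.

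The main obstacle, and the reason both sides must be quotiented in exactly the specified way, is the coordinated handling of the two centers. The similitude $\nu(\gamma)$ encodes both the lattice-scaling factor $c$ and the Rosati norm $ff^\dagger$, and one must verify that a single normalization by $\ell^n\in Z\cap \G$ achieves $\gamma L_i=L_i$ and $ff^\dagger=1$ at once; the residual $\pm 1\in Z\cap \G$ ambiguity in this normalization accounts precisely for the kernel $\{\pm 1\}$ on the automorphism side. Rosati positivity is what rules out the sign and makes the quotient by $Z$ on the adelic side correspond exactly to the quotient by $\{\pm 1\}$ on the automorphism side.
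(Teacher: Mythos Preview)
Your proof is correct and follows essentially the same Tate-module approach as the paper: both identify $T_\ell(A_i)$ with the lattice $x_i\Z_\ell^{2g}$ via the marking $\phi_{A_i}$ and translate the stabilizer condition $\G\cap x_iZKx_i^{-1}$ into $\ell$-integrality of the corresponding quasi-endomorphism of $A_i$. The paper's argument runs in the opposite direction (from $\widetilde{\G}_i$ to ${\rm Aut}$) and invokes Faltings' theorem for the identification ${\rm End}(T_\ell(A_i))\simeq {\rm End}(A_i)\otimes\Z_\ell$, whereas you transport directly by conjugation with $\phi_{A_i}$, landing in ${\rm End}(A_i)\otimes\Z[1/\ell]$ from the outset and then checking $\ell$-integrality by hand; this is more elementary and sidesteps the (slightly delicate over~$\bar\F_p$) appeal to the Tate conjecture. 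One minor remark: your phrase ``integral at all primes $q\neq p,\ell$'' undersells your own argument---since $\gamma\in{\rm End}(A_0)\otimes\Z[1/\ell]$ and $\phi_{A_i}$ has $\ell$-power degree, you immediately have $f\in{\rm End}(A_i)\otimes\Z[1/\ell]$, which covers $q=p$ as well, so only $\ell$ requires the lattice check.
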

\begin{proof}By construction, we have $T_\ell(A_i)=x_i\Z^{2g}_\ell$ under the inclusion $T_\ell(A_i)\hookrightarrow 
V_\ell(A_0)=\Q^{2g}_\ell$ induced by the $ell$-marking of $(A_i,\mL_i)$. 
Then the group $(\G\cap x_i ZK x^{-1}_i)$ obviously acts on $T_\ell(A_i)$. Thus, we have 
an injection $(\G\cap x_i ZK x^{-1}_i)\subset {\rm End}(T_\ell(A_i))$. On the other hand, by Faltings' theorem 
(cf. Theorem 4 of \cite{Fal}),  ${\rm End}(T_\ell(A_i))\simeq {\rm End}(A_i)\otimes_\Z\Z_\ell$. 
Hence we may have $(\G\cap x_i ZK x^{-1}_i)\subset  {\rm End}(A_i)\otimes_\Z\Z_\ell$ which 
is compatible with the identification $\G\subset \G^\dagger(A_i)$. 
Since each element of $\G^\dagger(A_i)$ is an $\ell$-isogeny, it preserves the polarization of $A_i$ up 
to the multiplication by $Z$. It follows from this that  $\widetilde{\G}_i\subset 
{\rm Aut}((A_i,\mL_i))/\{\pm 1\}$. The opposite inclusion follows by Faltings' theorem again. 
\end{proof}
Next we study the image of each element of $\G\bs G/K=\G\bs G/ZK$ under the Hecke action of $T(\ell)$. 
Since $T(\ell)$ is defined in terms of lattices (see (\ref{Hecke-ope})), we define another formulation in terms of elements in $G$.  
Let $t_\ell:={\rm diag}(\overbrace{1,\ldots,1}^{g},\overbrace{\ell,\ldots,\ell}^{g})\in G$.   
We decompose 
\begin{equation}\label{double-coset}
Kt_\ell K=\coprod_{t\in T}g_t K
\end{equation}
where $T$ is the index set so that $|T|=N_g(\ell)$. 
For each $i,j\in \{1,\ldots,h\}$ we define 
\begin{equation}\label{mult}
m_{ij}:=\{t\in T\ |\ \G x_i g_t ZK=\G x_j ZK\}
\end{equation}
which is independent of the choice of the representatives $\{g_t\}_{t\in T}$. 
Let $W(\ell):=\{g_t ZK\ |\ t\in T\}$. 
Then for each $i\in \{1,\ldots,h\}$, recall $\widetilde{\G}_i=(\G\cap x_i ZK x^{-1}_i)Z/Z$. and 
the finite group $x_i^{-1}\widetilde{\G}_i x_i\subset KZ/Z$ acts on $W(\ell)$ 
from the left by multiplication. 
The action induces the orbit decomposition 
\begin{equation}\label{decom-w}
W(\ell)=\coprod_{t\in T'}O_{x_i^{-1}\widetilde{\G}_i x_i}(g_tKZ)
\end{equation}
for some subset $T'\subset T$. 
\begin{lem}\label{stab-auto1}
Keep the notation being as above. 
For each $i\in \{1,\ldots,h\}$ and $t\in T'$, if 
$ \G x_i g_t ZK=\G x_j ZK$ for some $j\in \{1,\ldots,h\}$, 
the stabilizer ${\rm Stab}_{x_i^{-1}\widetilde{\G}_i x_i}(g_tKZ)$ 
is isomorphic to a subgroup $S_i$  of  $\widetilde{\G}_j$.   
\end{lem}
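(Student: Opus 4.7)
The plan is to realize the stabilizer as an explicit subgroup of $\widetilde{\G}_j$ via the element $\delta \in \G$ that witnesses the identity $\G x_i g_t ZK = \G x_j ZK$.

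First I would unwind the stabilizer condition. An element $\gamma \in x_i^{-1}\widetilde{\G}_i x_i$ fixes the coset $g_t KZ$ if and only if $g_t^{-1}\gamma g_t \in KZ$. Lifting to $G$, every such $\gamma$ comes from some $\widetilde{\g} \in \G \cap x_i ZK x_i^{-1}$ via $\gamma = x_i^{-1}\widetilde{\g}\, x_i$ modulo $Z$, and the stabilizer condition then reads $g_t^{-1} x_i^{-1}\widetilde{\g}\, x_i g_t \in KZ$.

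Next I would use the hypothesis $\G x_i g_t ZK = \G x_j ZK$ to choose a once-and-for-all decomposition $x_i g_t = \delta x_j u$ with $\delta \in \G$ and $u \in ZK$ (using that $Z$ is central in $G$). Substituting this, the stabilizer condition becomes
\[
u^{-1} x_j^{-1}\delta^{-1}\widetilde{\g}\,\delta\, x_j u \in KZ,
\]
and since $u \in KZ$ and $Z$ is central, this is equivalent to $\delta^{-1}\widetilde{\g}\,\delta \in x_j KZ x_j^{-1}$. Because $\delta^{-1}\widetilde{\g}\,\delta$ also lies in $\G$, it belongs to $\G \cap x_j ZK x_j^{-1}$, hence defines an element $\widetilde{\g}' \in \widetilde{\G}_j$ after reducing modulo $Z$. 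I would then define the map
\[
\Phi \colon \mathrm{Stab}_{x_i^{-1}\widetilde{\G}_i x_i}(g_t KZ) \longrightarrow \widetilde{\G}_j, \qquad \gamma \longmapsto \delta^{-1}\widetilde{\g}\,\delta \bmod Z,
\]
and verify the three routine points: (i) well-definedness, which reduces to noting that a different lift of $\gamma$ differs by an element of $Z$ and that $Z$ is central, so $\Phi(\gamma)$ is unchanged; (ii) multiplicativity, which is immediate from conjugation being a homomorphism; (iii) injectivity, where $\Phi(\gamma)=1$ forces $\delta^{-1}\widetilde{\g}\,\delta \in Z$, hence $\widetilde{\g} \in Z$, hence $\gamma$ is trivial in $x_i^{-1}\widetilde{\G}_i x_i$ (which is already taken modulo $Z$). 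Setting $S_i := \mathrm{Image}(\Phi)$ then gives the desired subgroup of $\widetilde{\G}_j$.

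The bookkeeping is the only real subtlety: one must consistently work modulo $Z$, since both $\widetilde{\G}_i$ and $\widetilde{\G}_j$ are defined as quotients, and one must exploit that the centre $Z_{GSp_g}(\Q_\ell)$ consists of scalar matrices (hence is central in $G$) so that conjugation by elements of $ZK$ and insertion of $u$ do not disturb the membership in $KZ$. This is where the factor $Z_{GSp_g}(\Q_\ell)$ deliberately inserted in the double coset description (cf.\ the proof of Proposition \ref{ikos-re} and Theorem \ref{JZ}) plays its role. Once this care is taken, the argument is purely formal.
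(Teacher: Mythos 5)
Your proof is correct and follows essentially the same route as the paper: both unwind the double-coset identity $\G x_i g_t ZK=\G x_j ZK$ into an explicit decomposition and realize the stabilizer inside $\widetilde{\G}_j$ by a conjugation homomorphism, checking injectivity modulo the central $Z$. The only cosmetic difference is that you land directly in $\widetilde{\G}_j=(\G\cap x_j ZK x_j^{-1})Z/Z$ via conjugation by $\delta^{-1}$, whereas the paper conjugates by an element of the form $kg_t^{-1}$ into $(x_j^{-1}\G x_j\cap K)Z/Z$ and then identifies that group with $\widetilde{\G}_j$.
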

\begin{proof}
By assumption, $x_j=\gamma x_i g_t z k$ for some $\gamma\in \G,\ z\in Z$, and $k\in K$. 
For each $\alpha Z\in x_i^{-1}\widetilde{\G}_i x_i=(x^{-1}_i\G x_i\cap K)Z/Z$, let us consider the 
element $kg^{-1}_t \alpha g_t k^{-1}Z$ in $G/Z$. By using $x_j=\gamma x_i g_t z k$, we see that the element 
 belongs to $x^{-1}_j \G x_j Z/Z$. Further, if $\alpha Z$ is an element of ${\rm Stab}_{x_i^{-1}\widetilde{\G}_i x_i}(g_t KZ)$, 
$kg^{-1}_t \alpha g_t k^{-1}Z$ also belongs to $K$. Therefore, we have a group homomorphism  
$${\rm Stab}_{x_i^{-1}\widetilde{\G}_i x_i}(g_t KZ)\stackrel{\tiny{\text{the conjugation by $kg^{-1}_t$}}}{\longrightarrow} (x^{-1}_j\G x_j\cap K)Z/Z\simeq \widetilde{\G}_j.$$
Clearly, this map is injective and we  have the claim. 
\end{proof}
We also study the converse of the correspondence from $\G x_i g_t ZK$ to $\G x_i ZK$ 
for each $i\in \{1,\ldots,h\}$. Clearly, $g^{-1}_tZK\in W(\ell)$. 
\begin{lem}\label{stab-auto2}
For each $i\in \{1,\ldots,h\}$ and $t\in T'$, if 
$ \G x_i g_t ZK=\G x_j ZK$ for some $j\in \{1,\ldots,h\}$, then 
$|{\rm Stab}_{x_i^{-1}\widetilde{\G}_i x_i}(g_t KZ)|=
|{\rm Stab}_{x_j^{-1}\widetilde{\G}_j x_j}(g^{-1}_t KZ)|$. 
In particular, it holds 
$$|\widetilde{\G}_j |\cdot |O_{x_i^{-1}\widetilde{\G}_i x_i}(g_tKZ)|=
|\widetilde{\G}_i |\cdot |O_{x_j^{-1}\widetilde{\G}_j x_j}(g^{-1}_tKZ)|.$$
\end{lem}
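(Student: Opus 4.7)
My approach is to first reduce both identities to the single claim
$|{\rm Stab}_{x_i^{-1}\widetilde{\G}_i x_i}(g_tKZ)|=|{\rm Stab}_{x_j^{-1}\widetilde{\G}_j x_j}(g_t^{-1}KZ)|$
via the orbit--stabilizer theorem. Conjugation by $x_i$ identifies $x_i^{-1}\widetilde{\G}_ix_i$ with $\widetilde{\G}_i$, so orbit--stabilizer yields $|O_{x_i^{-1}\widetilde{\G}_i x_i}(g_t KZ)|\cdot|{\rm Stab}_{x_i^{-1}\widetilde{\G}_i x_i}(g_t KZ)|=|\widetilde{\G}_i|$ and the analogous identity on the $j$--side, whose ratio delivers $|\widetilde{\G}_j|\cdot|O_i|=|\widetilde{\G}_i|\cdot|O_j|$ exactly when the two stabilizers have equal cardinality.

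For the stabilizer equality I would translate everything through the geometric dictionary developed in Section \ref{AFJZ}. Let $(A_i,\mL_i)$ and $(A_j,\mL_j)$ be the principally polarized superspecial abelian varieties attached to the double cosets $\G x_iZK$ and $\G x_jZK$ under Theorem \ref{JZ}. Through the Hecke dictionary of Theorem \ref{Rel-HP}, the element $g_tKZ\in W(\ell)$ corresponds to a maximal isotropic subgroup $C\subset A_i[\ell]$; the hypothesis $\G x_ig_tZK=\G x_jZK$ translates into $(A_j,\mL_j)\simeq(A_i/C,\mL_C)$ with the quotient $(\ell)^g$--isogeny $f_C\colon A_i\to A_j$; and the element $g_t^{-1}KZ$ corresponds on the $j$--side to the kernel $D:=f_C(A_i[\ell])\subset A_j[\ell]$ of the dual $(\ell)^g$--isogeny $\widetilde{f_C}\colon A_j\to A_i$ produced by Proposition \ref{dual-marking}. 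Lemma \ref{auto} then identifies $x_i^{-1}\widetilde{\G}_ix_i$ with ${\rm Aut}((A_i,\mL_i))/\{\pm1\}$ compatibly with its action on maximal isotropic subgroups of $A_i[\ell]$, so that ${\rm Stab}_{x_i^{-1}\widetilde{\G}_ix_i}(g_tKZ)$ becomes the image of the subgroup ${\rm Aut}(A_i,\mL_i;C)$ of polarized automorphisms fixing $C$; the analogous identification holds on the $j$--side with $D$ in place of $C$.

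The final step will be to exhibit a canonical dual--isogeny bijection ${\rm Aut}(A_i,\mL_i;C)\simeq{\rm Aut}(A_j,\mL_j;D)$. A polarized automorphism $\sigma$ of $A_i$ with $\sigma(C)=C$ descends through $f_C$ to an automorphism $\tau$ of $A_j=A_i/C$, and $\tau$ automatically preserves $D=f_C(A_i[\ell])$ since $\sigma$ preserves $A_i[\ell]$. Conversely, given $\tau$ with $\tau(D)=D$, the endomorphism $\widetilde{f_C}\circ\tau\circ f_C$ kills $A_i[\ell]$ because $\tau$ sends $D=\ker\widetilde{f_C}$ to itself, so it factors as $[\ell]_{A_i}\circ\sigma$ for a unique $\sigma\in{\rm End}(A_i)$, which a direct check shows is an automorphism compatible with $\mL_i$ and fixing $C$. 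These two constructions are mutually inverse, and quotienting by the common central $\{\pm1\}$ yields $|{\rm Stab}_i|=|{\rm Stab}_j|$. I expect the main obstacle to be the careful lattice--theoretic bookkeeping needed to identify $g_t^{-1}KZ$ on the $j$--side with the dual kernel $D\subset A_j[\ell]$, which tracks how the Hecke correspondence on homothety classes of lattices interchanges sub-- and over--lattices under inversion.
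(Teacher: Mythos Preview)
Your argument is correct, but it takes a considerably longer geometric route than the paper does. The paper's proof is a single line of pure group theory: writing $x_j=\gamma x_i g_t z k$ with $\gamma\in\G$, $z\in Z$, $k\in K$, one observes that the relation is symmetric in $(i,g_t)$ and $(j,g_t^{-1})$ up to elements of $K$ and $Z$, so the very conjugation map used in Lemma~\ref{stab-auto1} to inject ${\rm Stab}_{x_i^{-1}\widetilde{\G}_i x_i}(g_tKZ)$ into $x_j^{-1}\widetilde{\G}_jx_j$ already lands in ${\rm Stab}_{x_j^{-1}\widetilde{\G}_j x_j}(g_t^{-1}KZ)$ and is inverted by the conjugation coming from the reversed relation. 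No abelian varieties enter at all.

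Your approach instead transports the problem through the dictionary with superspecial abelian varieties, rewrites the two stabilizers as reduced automorphism groups fixing $C$ and $D={\rm Ker}\,\widetilde{f_C}$, and builds the bijection by descent and lift along the dual pair $(f_C,\widetilde{f_C})$. This is valid and arguably more conceptual, but note two things. First, the identification of the action of $x_i^{-1}\widetilde{\G}_ix_i$ on $W(\ell)$ with that of ${\rm RA}_i$ on ${\rm LG}_i(\ell)$, and the matching of $g_t^{-1}KZ$ with ${\rm Ker}\,\widetilde{f_C}$, are exactly what the paper packages afterwards in Proposition~\ref{corr-results}, which in turn cites the present lemma; so you must extract those compatibilities directly from the construction in Section~\ref{AFJZ} and Lemma~\ref{auto} rather than quote Proposition~\ref{corr-results}, to avoid circularity. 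Second, the ``direct check'' that the lifted $\sigma$ is polarized (i.e.\ $\sigma^\dagger\sigma={\rm id}$) does require unwinding how the Rosati involution interacts with $f_C$ and $\widetilde{f_C}$; it is routine via $f_C^\ast\mL_j\simeq\mL_i^{\otimes\ell}$ but should be written out. What your approach buys is a transparent geometric meaning for the bijection; what the paper's approach buys is a two-line proof with no bookkeeping.
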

\begin{proof}
As in the proof of the previous lemma, if we write  $x_j=\gamma x_i g_t z k$, then 
the conjugation by $g_t k^{-1}$ yields the isomorphism from 
${\rm Stab}_{x_j^{-1}\widetilde{\G}_j x_j}(g^{-1}_t KZ)$ to ${\rm Stab}_{x_i^{-1}\widetilde{\G}_i x_i}(g_t KZ)$. 
The claim follows from this. 
\end{proof}
Finally, we study the corresponding results 
in $SS_g(p,\ell,A_0,\mL_0)$ under the identification 
\begin{equation}\label{important-identity}
SS_g(p,\ell,A_0,\mL_0)\lra 
G_g(\Z[1/\ell])\bs GSp_g(\Q_\ell)/GSp_g(\Z_\ell)
\end{equation}
given by Theorem \ref{JZ}. 
We write 
$$SS_g(p,\ell,A_0,\mL_0)=\{w_i=[(A_i,\mL_i,\phi_{A_i})]\ |\ i=1,\ldots,h\}.$$
Let us fix $i\in \{1,\ldots,h\}$ and we denote by 
${\rm LG}_i(\ell)=\{C_t\}_{t\in T}$ the set of all totally maximal isotropic subspace of $A_i[\ell]$ with respect to 
the Weil pairing associated to $\mL_i$. 
Here we use the same index $T$ as $W(\ell)$ defined before. 
  Then the group ${\rm RA}_i:={\rm Aut}((A_i,\mL_i))/\{\pm 1\}$ 
acts on ${\rm LG}(\ell)$ since each element there preserves the polarization. As in (\ref{decom-w}) we also have the decomposition 
$${\rm LG}(\ell)=\coprod_{t\in T'}O_{{\rm RA}_i}(C_t).$$
Suppose $\G x_i ZK$ corresponds to $w_i=[(A_i,\mL_i,\phi_{A_i})]$ under 
(\ref{important-identity}). 
\begin{prop}\label{corr-results}Keep the notation being as above. 
The followings holds.  
\begin{enumerate}
\item The pullback of $\phi_{A_i}$ induces an identification between ${\rm LG}_i(\ell)$ and $W(\ell)$.   
\item Suppose $C_t\in {\rm LG}_i(\ell)$ corresponds to $g_tZK\in W(\ell)$ for $t\in T$ under the above 
identification. Let $f_{C_t}:(A_i,\mL_{A_i})\lra (A_{i,C_t},\mL_{A_{i,C_t}})$ be the $(\ell)^g$-isogeny defined by $C_t$ and suppose 
$[(A_{i,C_t},\mL_{(A_{i,C_t}},f_{C_t}\circ \phi_{A_i})]=w_j$ for some $j\in \{1,\ldots,h\}$ and thus 
$f_{C_t}$ is regarded as an $(\ell)^g$-isogeny from $(A_i,\mL_{A_i})$ to $(A_i,\mL_{A_j})$. 
Let $\widetilde{f}_{C_t}:(A_i,\mL_{A_j})\lra (A_i,\mL_{A_i})$ the $(\ell)^g$-isogeny  obtained in Proposition \ref{dual-marking} 
for $f_{C_t}$. Then it holds 
\begin{itemize}
\item the kernel of $\widetilde{f}$ corresponds to $g^{-1}_tZK$ under the above 
identification,  
\item $|{\rm RA}_i|=|\widetilde{\G}_i|$,
\item $|O_{{\rm RA}_i}(C_t)|= |O_{x_i^{-1}\widetilde{\G}_i x_i}(g_tKZ)|$, 
$|O_{{\rm RA}_j}({\rm Ker}\widetilde{f}_{C_t})|= |O_{x_j^{-1}\widetilde{\G}_j x_j}(g^{-1}_tKZ)|$, and 
\item  $|{\rm RA}_j|\cdot |O_{{\rm RA}_i}(C_t)|=
|{\rm RA}_i|\cdot |O_{{\rm RA}_j}({\rm Ker}\widetilde{f}_{C_t})|$. 
\end{itemize}
\end{enumerate}
\end{prop}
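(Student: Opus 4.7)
The plan is to transport both ${\rm LG}_i(\ell)$ and $W(\ell)$ into a common Tate-module / lattice picture and then read off the listed identities by tracing the bijection of Theorem~\ref{JZ}. Concretely, I would use the $\ell$-marking $\phi_{A_i}$ to identify $V_\ell(A_i)$ with $V:=\Ql^{2g}$ so that $T_\ell(A_i)=x_i\Zl^{2g}$. For any maximal isotropic $C\subset A_i[\ell]$, the lattice $T_C:=\pi^{-1}(C)$ then satisfies $x_i\Zl^{2g}\subset T_C\subset \ell^{-1}x_i\Zl^{2g}$ with $T_C/T_\ell(A_i)$ maximal isotropic, so writing $T_C=x_ig\Zl^{2g}$ forces $g^{-1}$ to have elementary divisors $(1,\dots,1,\ell,\dots,\ell)$, i.e.\ $g\in Kt_\ell^{-1}K$. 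Since $\ell t_\ell^{-1}\in Kt_\ell K$ (via conjugation by $J_g\in K$), modulo $ZK$ the class of $g$ ranges exactly over $W(\ell)$, which yields the bijection in part~(1) with $C_t\leftrightarrow g_tZK$.

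Next I would transfer the ${\rm RA}_i$-action on ${\rm LG}_i(\ell)$ across this bijection. By Lemma~\ref{auto}, each class in ${\rm RA}_i$ is represented by some $\alpha\in\G\cap x_iZKx_i^{-1}$ acting on $T_\ell(A_i)$ through Faltings' isomorphism ${\rm End}(T_\ell(A_i))\simeq{\rm End}(A_i)\otimes\Zl$; its effect on $T_{C_t}=x_ig_t\Zl^{2g}$ is left multiplication by $\alpha$, which after the identity $\alpha x_i=x_i(x_i^{-1}\alpha x_i)$ becomes left multiplication by $x_i^{-1}\alpha x_i\in x_i^{-1}\widetilde{\G}_ix_i$ on the coset $g_tZK$. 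This transfer is manifestly equivariant, so it yields both $|{\rm RA}_i|=|\widetilde{\G}_i|$ and $|O_{{\rm RA}_i}(C_t)|=|O_{x_i^{-1}\widetilde{\G}_ix_i}(g_tZK)|$ at once.

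For the dual isogeny claim, the relation $\widetilde{f}_{C_t}\circ f_{C_t}=[\ell]_{A_i}$ gives $V_\ell(\widetilde{f}_{C_t})=\ell\cdot V_\ell(f_{C_t})^{-1}$, so after identifying $V_\ell(A_j)$ with $V$ via $\phi_{A_j}$ the kernel $A_i[\ell]/C_t$ of $\widetilde{f}_{C_t}$ corresponds to the lattice $\ell^{-1}T_\ell(A_i)$. Writing $x_j=\gamma x_ig_tzk$ as in Lemma~\ref{stab-auto1} with $\gamma\in\G$, $z\in Z$, $k\in K$, and using that $\ell^{-1}$ and $z$ are central while $k\Zl^{2g}=\Zl^{2g}$, one computes $\ell^{-1}T_\ell(A_i)=\gamma^{-1}x_j(\ell^{-1}z^{-1})g_t^{-1}\Zl^{2g}$, whose class in $\G\backslash G/ZK$ is $x_jg_t^{-1}ZK$; hence the kernel corresponds to $g_t^{-1}ZK\in W(\ell)$. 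Applying the same transfer principle at the vertex $w_j$ gives $|O_{{\rm RA}_j}(\ker\widetilde{f}_{C_t})|=|O_{x_j^{-1}\widetilde{\G}_jx_j}(g_t^{-1}ZK)|$, and the final mass identity is then immediate from Lemma~\ref{stab-auto2}. The main obstacle is not any individual calculation but rather keeping the three parallel pictures — principally polarized abelian varieties with $\ell$-markings, sublattices of $V$, and double cosets in $\G\backslash G/ZK$ — aligned so that left versus right actions, the center $Z$, and the change of reference point $x_i\leftrightarrow x_j$ all cancel consistently.
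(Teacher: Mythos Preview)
Your approach is exactly the one the paper intends: the paper's own proof is the single sentence ``The claim follows from the construction of (\ref{important-identity}) with Lemma~\ref{auto} through Lemma~\ref{stab-auto2}'', and you have unpacked precisely this, transporting Lagrangians to lattices via the Tate module, invoking Lemma~\ref{auto} for the equivariance under ${\rm RA}_i\simeq\widetilde{\G}_i$, and closing with Lemma~\ref{stab-auto2} for the mass relation.

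One small bookkeeping slip: in the dual-kernel step you write $\ell^{-1}T_\ell(A_i)=\gamma^{-1}x_j(\ell^{-1}z^{-1})g_t^{-1}\Z_\ell^{2g}$, appealing to ``$k\Z_\ell^{2g}=\Z_\ell^{2g}$''. But from $x_j=\gamma x_i g_t z k$ one gets $\ell^{-1}x_i\Z_\ell^{2g}=\gamma^{-1}x_j(\ell z)^{-1}k^{-1}g_t^{-1}\Z_\ell^{2g}$; the factor $k^{-1}$ sits to the \emph{left} of $g_t^{-1}$ and cannot be absorbed into $\Z_\ell^{2g}$ on the right. The clean fix is to replace the representative $x_j$ by $x_j':=x_jk^{-1}=\gamma x_i g_t z$ (same class in $\G\backslash G/ZK$), after which the computation really does yield $x_j'g_t^{-1}$ and the bullet ``$\ker\widetilde{f}_{C_t}\leftrightarrow g_t^{-1}ZK$'' holds on the nose. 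This is also what makes the conjugation by $g_tk^{-1}$ in the proof of Lemma~\ref{stab-auto2} line up. Since all the quantities you ultimately need (orbit sizes, stabilizer orders, and the final identity) are insensitive to this right-$K$ normalization, your argument goes through once this is adjusted.
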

\begin{proof}The claim follows from the construction of (\ref{important-identity}) with Lemma \ref{auto} 
through Lemma \ref{stab-auto2}. 
\end{proof}
We remark that the fourth claim of (2) in the above proposition was proved in Lemma 3.2 of \cite{FS2}.

\section{A comparison between two graphs}\label{Comparison}
In this section we check, by passing to $SS_g(p,\ell,A_0,\mL_0)$, that the graph defined by the special 1-complex
$G_g(\Z[1/\ell])\bs GSp_g(\Q_\ell)/Z_{GSp_g}(\Q_\ell)GSp_g(\Z_\ell)$ is naturally identified with 
Jordan-Zaytman's big isogeny graph in \cite{JZ}. 
\subsection{Jordan-Zaytman's big isogeny graph}
We basically follow the notation in \S 7.1 and \S5.3 of \cite{JZ}. 
The $(\ell)^g$-isogeny (big) graph $Gr_g(\ell,p)$ due to Jordan-Zaytman for $SS_g(p)$  is 
defined as a directed (regular) graph where
\begin{itemize}
\item the set of vertices $V(Gr_g(\ell,p))$ is $SS_g(p)$, and
\item the set of directed edges between two vertices $v_1=[(A_1,\mL_1)]$ and 
$v_2=[(A_2,\mL_2)]$ is the set of equivalence classes of  $(\ell)^g$-isogenies between 
$(A_1,\mL_1)$ and $(A_1,\mL_1)$. 
Here two isogenies $f,h:(A_1,\mL_1)\lra (A_2,\mL_2)$ are said to be equivalent if 
there exist automorphisms $\phi\in {\rm Aut}(A_1,\mL_1)$ and  
$\psi\in {\rm Aut}(A_2,\mL_2)$ such that $\psi\circ h=f\circ \phi$. 
\end{itemize}
The case when $g=1$ is nothing but Pizer's graph $G(1,p;\ell)$ handled in 
\cite{Pizer-graph}.  

\subsection{The ($\ell$-marked) $(\ell)^g$-isogeny graph}
Similarly, the ($\ell$-marked) $(\ell)^g$-isogeny graph $\Gc^{SS}_{g}(\ell,p)$ for 
$SS_g(p,\ell,A_0,\mL_0)$ is defined as a directed (regular) graph where
\begin{itemize}
\item the set of vertices $V(\Gc^{SS}_{g}(\ell,p))$ is $SS_g(p,\ell,A_0,\mL_0)$ and
\item the set of edges between two vertices $v_1$ and $v_2$ is the set of equivalence classes of  $(\ell)^g$-isogenies between corresponding principally polarized superspecial abelian varieties commuting with marking isogenies representing 
$v_1$ and $v_2$ under the identification. In other words, if $v_1$  and $v_2$ correspond to 
$[(A_1,\mL_1,\phi_{A_1})]$ and  $[(A_2,\mL_2,\phi_{A_2})]$ with $\ell$-markings 
$\phi_{A_1}:(A_0,\mL_0)\lra (A_1,\mL_1)$ and 
$\phi_{A_2}:(A_0,\mL_0)\lra (A_2,\mL_2)$ respectively, then 
an edge from $v_1$ to $v_2$ is an $(\ell)^g$-isogeny $f:(A_1,\mL_1)\lra (A_2,\mL_2)$ such that 
two markings $f\circ \phi_{A_1}$ and $\phi_{A_2}$ of $(A_2,\mL_2)$ from $(A_0,\mL_0)$ 
differ by only an element in $\Gamma(A_0)^\dagger$. 
\end{itemize}

\subsection{The graph defined by the special 1-complex}\label{btq1}
Put $\Gamma=G_g(\Z[1/\ell]),\ G=GSp_g(\Q_\ell)$, $Z=Z_{GSp_g}(\Q_\ell)$ and $K=GSp_g(\Z_\ell)$ for simplicity. 
We consider the graph associated to the quotient $\G\bs \mathcal{S}_g$ 
where $\G=G_g(\Z[1/\ell])$ and 
$\mathcal{S}_g=GSp_g(\Q_\ell)/Z_{GSp_g}(\Q_\ell)GSp_g(\Z_\ell)$. 

Two elements $v_1=\Gamma g_1 ZK$ and 
$v_2=\Gamma g_2 ZK$ in $\Gamma\bs G/ZK$ said to be 
adjacent if $v_2=\Gamma g_1 g_t ZK$ for some $t \in T$ where 
$\{g_t\}_{t\in T}$ is defined in (\ref{double-coset}). 

The graph in question, say ${\rm BTQ}^1_g(\ell,p)$, is a directed (regular) graph where
\begin{itemize}
\item the set of vertices $V({\rm BTQ}^1_g(\ell,p))$ is $\Gamma\bs G/ZK$, and
\item the set of directed edges between two vertices $v_1=\Gamma g_1 ZK$ and 
$v_2=\Gamma g_2 ZK$ is defined by the adjacency condition in the above sense. 
Namely, an edge from $v_1$ from $v_2$ is $g_t$ with $t\in T$ such that 
$v_2=\Gamma g_1 g_t ZK$. 
\end{itemize} 

\subsection{Comparison theorem}
Let us keep the notation in this section. 
We define 
$${\rm RA}(v):=
\left\{\begin{array}{ll}
{\rm RA}(A,\mL) & \text{if $v=[(A,\mL)]\in SS_g(p)$ or $v=[(A,\mL,\phi_A)]\in 
SS_g(p,\ell,A_0,\mL_0)$,}\\
(\G\cap x ZK x^{-1})Z/Z & \text{if $v=\G x ZK$ in the case of ${\rm BTQ}^1_g(\ell,p)$.}
\end{array}\right.$$
Further we also define 
$${\rm Ker}(e):=
\left\{\begin{array}{ll}
{\rm Ker}(f) & \text{if $e$ is a class of $(\ell)^g$-isogeny $f$ in the case 
of $SS_g(p)$ or  $SS_g(p,\ell,A_0,\mL_0)$,}\\
g_t & \text{if $e$ is an edge defined by $g_t,\ t\in T$ 
in the case of ${\rm BTQ}^1_g(\ell,p)$.}
\end{array}\right.$$
We will prove the following comparison theorem which plays an important role in 
our study:
\begin{thm}\label{comparison}
The identifications $($\ref{forget}$)$ and $($\ref{another-desc1}$)$ induce the 
following graph isomorphisms
$$Gr_g(\ell,p)\stackrel{(\ref{forget})\atop \sim}{\longleftarrow} \mathcal{G}^{SS}_{g}(\ell,p)
\stackrel{(\ref{another-desc1})\atop\sim}{\lra}{\rm BTQ}^1_g(\ell,p).$$
Further, the following properties are preserved under the isomorphisms:
\begin{itemize}
\item the Hecke action of $T(\ell)^{{\rm geo}},\ T(\ell)^{{\rm geo}}_{(A_0,\mL_0)}$, or 
$T(\ell)$ on each set of the vertices defines $N_g(\ell)$-neighbors of a given 
vertex, 
\item each edge $e$ from $v_1$ to $v_2$ has an opposite $\widehat{e}$ such that  
$$|{\rm RA}(v_2)|\cdot |O_{{\rm RA}(v_1)}({\rm Ker}(e))|=|{\rm RA}(v_1)|\cdot 
|O_{{\rm RA}(v_2)}({\rm Ker}(\widehat{e}))|$$
\end{itemize}
\end{thm}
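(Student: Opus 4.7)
The plan is to upgrade the vertex bijections supplied by Theorem \ref{JZ} and (\ref{forget}) to graph isomorphisms, and then read off the two compatibility statements from Theorem \ref{Rel-HP} and Proposition \ref{corr-results}. Everything rests on comparing three parametrizations of ``out-neighbors from a given vertex'': maximal totally isotropic subgroups $C \subset A[\ell]$, equivalence classes of $(\ell)^g$-isogenies out of $(A,\mL)$, and cosets $g_t ZK$ in the decomposition $Kt_\ell K = \coprod_{t\in T} g_t K$ from (\ref{double-coset}).

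First I would handle the right-hand identification $\mathcal{G}^{SS}_{g}(\ell,p) \to {\rm BTQ}^1_g(\ell,p)$. Fix a vertex $v = [(A,\mL,\phi_A)]$ of $\mathcal{G}^{SS}_{g}(\ell,p)$ corresponding under (\ref{another-desc1}) to a class $\Gamma x ZK$. By the construction of the graphs, the out-edges from $v$ are indexed by the set ${\rm LG}(\ell)$ of maximal isotropic subgroups of $A[\ell]$, while the out-edges from $\Gamma x ZK$ are indexed by $W(\ell) = \{g_t ZK \mid t \in T\}$. Part (1) of Proposition \ref{corr-results} supplies a natural bijection ${\rm LG}(\ell) \leftrightarrow W(\ell)$ via pullback of the marking $\phi_A$ on $\ell$-adic Tate modules, and the commutativity of the diagram in Theorem \ref{Rel-HP} shows that this bijection respects endpoints.

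Second I would handle the left-hand identification $\mathcal{G}^{SS}_{g}(\ell,p) \to Gr_g(\ell,p)$, which on vertices is the forgetful map (\ref{forget}). By Proposition \ref{uni-1} an $(\ell)^g$-isogeny out of $(A,\mL)$ is determined by its kernel, so Jordan--Zaytman edges out of $[(A,\mL)]$ are orbits of the two-sided automorphism action on the set of such isogenies; since ${\rm Aut}(A,\mL)$ preserves the Weil pairing, these orbits are exactly ${\rm RA}(A,\mL)$-orbits on ${\rm LG}(\ell)$. On the marked side, Proposition \ref{diff-markings} shows that two $\ell$-markings from $A_0$ differ by an element of $\Gamma(A_0)^\dagger$, and via the identification (\ref{dagger}) together with Lemma \ref{auto}, the resulting action on $W(\ell) \cong {\rm LG}(\ell)$ is exactly the action of $x^{-1}\widetilde{\Gamma} x \cong {\rm RA}(A,\mL)$. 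Hence the two edge sets parametrize the same orbit space.

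Third, the two bullet-point properties follow directly: the Hecke compatibility is Theorem \ref{Rel-HP}, and the equality $|T| = N_g(\ell)$ gives the out-regularity; the opposite-edge statement uses Proposition \ref{dual-marking} to produce $\widehat{e}$ as the $(\ell)^g$-isogeny dual to $e$, corresponding under our identification to $g_t^{-1} ZK$, and the displayed orbit-size identity is then precisely the last bullet of part (2) of Proposition \ref{corr-results}. The one delicate point, which I expect to be the main obstacle, is the matching of equivalence relations in the second step: translating the Jordan--Zaytman ``pre/post-composition by automorphism'' equivalence into the marked ``change of $\ell$-marking by $\Gamma(A_0)^\dagger$'' equivalence requires careful bookkeeping of the quotient by $\{\pm 1\}$ and the compatibility between automorphisms of $(A,\mL)$ and those induced on $T_\ell(A)$; this is exactly what Lemma \ref{auto} and Faltings' theorem provide, so the argument reduces to unwinding those identifications.
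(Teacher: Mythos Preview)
Your proposal is correct and follows essentially the same route as the paper: the paper's own proof is extremely terse, simply citing that the vertex bijections are (\ref{forget}) and (\ref{another-desc1}), that the Hecke compatibility (first bullet) is Theorem \ref{Rel-HP}, and that the orbit-size identity (second bullet) is Proposition \ref{corr-results}. You invoke exactly these ingredients, together with Lemma \ref{auto} and Propositions \ref{diff-markings} and \ref{dual-marking} to unwind the edge identifications more explicitly; the only minor wrinkle is a slight tension between your first paragraph (out-edges indexed by all of ${\rm LG}(\ell)$) and your second (out-edges as ${\rm RA}$-orbits), but this is a presentation issue about multiplicities rather than a gap, and the paper's own definitions are equally informal on this point.
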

\begin{proof}
As in the claim already, 
the identifications between the sets of vertices are given by 
(\ref{forget}) and (\ref{another-desc1}). The compatibility of the Hecke operators 
follows from Theorem \ref{Rel-HP} and this yields the first property in the 
latter claim. The remaining formula follows from Proposition \ref{corr-results}. 
\end{proof}
\begin{cor}\label{lwm}Keep the notation being as above. 
The random walk matrices for 
$Gr_g(\ell,p),\ \Gc^{SS}_{g}(\ell,p)$, and  ${\rm BTQ}^1_g(\ell,p)$ 
coincide each other. 
\end{cor}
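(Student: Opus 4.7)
The plan is to deduce the corollary almost directly from Theorem \ref{comparison}, since the random walk matrix on an $N_g(\ell)$-regular directed multigraph is nothing but $N_g(\ell)^{-1}$ times the adjacency matrix that counts edges with multiplicity. Thus it suffices to verify two things: that the three graphs share a common degree, and that their adjacency matrices are carried to one another by the bijections $(\ref{forget})$ and $(\ref{another-desc1})$.

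First I would record the regularity. Each vertex of $\Gc^{SS}_g(\ell,p)$ has exactly $N_g(\ell)$ outgoing edges since the Lagrangian subgroups of $A[\ell]$ are in bijection with those outgoing edges. The corresponding statement for $Gr_g(\ell,p)$ is the content of \cite{JZ}, and for ${\rm BTQ}^1_g(\ell,p)$ it is the fact that $|T|=N_g(\ell)$ in the decomposition $(\ref{double-coset})$. All three walks therefore have the same normalization constant $N_g(\ell)^{-1}$.

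Next I would identify the edge multiplicities. For any two vertices $v_1,v_2$, the $(v_1,v_2)$-entry of the adjacency matrix equals the number of edges from $v_1$ to $v_2$, which in turn is exactly the coefficient of $v_2$ in the image $T(\ell)^{\rm geo}(v_1)$, $T(\ell)^{\rm geo}_{(A_0,\mL_0)}(v_1)$, or $T(\ell)(v_1)$, depending on which graph we work with; this follows directly from the definitions $(\ref{geo-Hecke})$, $(\ref{geo-Hecke1})$, and $(\ref{Hecke-ope})$. The commuting diagram of Theorem \ref{Rel-HP} together with the graph isomorphisms supplied by Theorem \ref{comparison} identifies these coefficients across the three graphs, so the three adjacency matrices coincide entry by entry after dividing by $N_g(\ell)$. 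The matching of individual edges and the bijection between $\mathrm{LG}_i(\ell)$ and $W(\ell)$ recorded in Proposition \ref{corr-results}(1) confirms that the multiplicity count is preserved and not merely the total degree.

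There is essentially no obstacle here beyond bookkeeping: the corollary is a clean repackaging of Theorem \ref{comparison}. The only point that requires care is that edges in $Gr_g(\ell,p)$ and $\Gc^{SS}_g(\ell,p)$ are equivalence classes of $(\ell)^g$-isogenies while edges in ${\rm BTQ}^1_g(\ell,p)$ are cosets $g_tZK$, so one must confirm that the equivalence relation on isogenies matches the coset description; this is exactly what Proposition \ref{corr-results}(1) and the proof of Theorem \ref{JZ} provide.
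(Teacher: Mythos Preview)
Your proposal is essentially correct and aligned with the paper's intent: the paper gives no separate proof of the corollary, treating it as an immediate consequence of Theorem~\ref{comparison}, and you spell out why.

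One point is worth sharpening. You write that the $(v_1,v_2)$-entry of the adjacency matrix equals the Hecke coefficient ``directly from the definitions $(\ref{geo-Hecke})$, $(\ref{geo-Hecke1})$, and $(\ref{Hecke-ope})$'', and then invoke Theorem~\ref{Rel-HP} to match the three Hecke operators. But the paper's remark immediately following the corollary emphasizes precisely that Theorem~\ref{Rel-HP} alone is \emph{insufficient}: for $Gr_g(\ell,p)$ and $\Gc^{SS}_g(\ell,p)$ the edges are equivalence classes of isogenies modulo automorphisms, so the number of edges (equivalence classes) from $v_1$ to $v_2$ is \emph{a priori} not the same as the number of Lagrangians $C$ with $[(A_1)_C]=v_2$. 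What makes the edge count (with the correct multiplicities) coincide with the Hecke coefficient is exactly the graph-isomorphism content of Theorem~\ref{comparison} together with Proposition~\ref{corr-results}, which matches orbits of Lagrangians with orbits of cosets and records the orbit sizes. You do cite these results in your last two paragraphs, so your argument is complete; but the logical dependence should run through Theorem~\ref{comparison} (the graph isomorphism, including edge multiplicities) rather than through Theorem~\ref{Rel-HP} (which only matches the Hecke actions on ${\rm Div}(\,\cdot\,)_\Z$ and says nothing about how the edge sets themselves correspond). The cleanest formulation is: Theorem~\ref{comparison} gives isomorphisms of directed multigraphs, hence the adjacency matrices---and therefore the normalized random walk matrices---agree.
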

We remark that Theorem \ref{Rel-HP} is insufficient to prove the above corollary 
while Theorem \ref{comparison} tells us more finer information for the relation 
of the reduced automorphisms and the multiplicity of each edge.  

\begin{remark}As shown in Theorem \ref{comparison} or Section 3 of \cite{FS2}, 
the group of reduced automorphisms gives a finer structure of 
its orbit of a given Lagragian subspace defining an $(\ell)^g$-isogeny. 
The edges in Figure \ref{Fig:p13} can be more precise as in the figure in 7A, p.297 of 
\cite{KT}. 
\end{remark}

\section{Bruhat-Tits buildings for symplectic groups}\label{BT}

In this and the following chapter, we introduce a more general framework than the case to which we apply.
The purpose is to simplify the notations and to indicate that the methods we use are applicable in a wider context.
The reader may assume that $F=\Q_\ell$ and $\varpi=\ell$ in the following discussion.

\subsection{Symplectic groups revisited for the buildings}\label{Sec:symplectic_group}
Let $F$ be a non-archimedean local field of characteristic different from $2$ and $\Oc$ be the ring of integers.
We fix a uniformizer $\varpi$ and identify the residue field $\Oc/\varpi \Oc$ with a finite field $\Fb_q$ of order $q$.
Further we denote by $F^\times$ and $\Oc^\times$ the multiplicative groups in $F$ and $\Oc$ respectively.
Let $\ord_\varpi$ be a discrete valuation in $F$, normalized so that $\ord_\varpi(F^\times)=\Z$.
For example, we consider the $\ell$-adic field $\Q_\ell$ for a prime $\ell$ with the ring of integers $\Z_\ell$,
where $\ell$ is a uniformizer and the residue field is $\Fb_\ell=\Z/\ell \Z$. 

For a positive integer $n$, let $V:=F^{2n}$ be the symplectic space over $F$ equipped with the standard symplectic pairing $\bi-form$ defined by 
$\lbr v, w\rbr=\tr^v J_n w$ for $v, w \in F^{2n}$.
For $V$,
there exists a basis $\{v_1, \dots, v_n, w_1, \dots, w_n\}$ such that
\[
\lbr v_i, w_j\rbr=\d_{i j} \quad \text{and} \quad \lbr v_i, v_j\rbr=\lbr w_i, w_j\rbr=0 \quad \text{for any $i, j=1, \dots, n$},
\]
where $\d_{ij}$ equals $1$ if $i=j$ and $0$ if $i \neq j$,
and we call it a symplectic basis of $(V, \bi-form)$.
Each choice of a symplectic basis yields an isomorphism between the isometry group and $Sp_n(F)$.

Note that the following elements are in $GSp_n(F)$,
\[
t_\lambda:=\diag(1, \dots, 1, \lambda, \dots, \lambda)=
\(
\begin{array}{cc}
I_n & 0\\
0 & \lambda I_n
\end{array}
\) \quad \text{for $\lambda \in F^\times$}.
\]
(See also Section \ref{notation}).
In the subsequent sections, we consider the projectivised groups:
let $PSp_n(F)$ and $PGSp_n(F)$ be the groups $Sp_n(F)$ and $GSp_n(F)$ modulo the centers respectively.
If we naturally identify $PSp_n(F)$ with a normal subgroup of $PGSp_n(F)$, then the quotient group
$PGSp_n(F)/PSp_n(F)$ is isomorphic to $(\Z/2\Z)\times \Oc^\times$, which is generated by the images of $t_\lambda=\diag(1, \dots, 1, \lambda, \dots, \lambda)$ for $\lambda \in \varpi \Oc^\times$.
Similarly, letting $PSp_n(\Oc)$ and $PGSp_n(\Oc)$ be the groups $Sp_n(\Oc)$ and $GSp_n(\Oc)$ modulo the centers respectively,
we identify $PSp_n(\Oc)$ with a subgroup in $PGSp_n(\Oc)$.

\subsection{Bruhat-Tits building: the construction}

Let $(V, \bi-form)$ be a symplectic space over $F$ of dimension $2n$.
We define a lattice $\L$ in $V$ as a free $\Oc$-module of rank $2n$.
Note that if $\L$ is a lattice, then $\L/\varpi \L$ is a vector space over $\Fb_q$ of dimension $2n$.
We say that a lattice $\L$ is primitive
if 
\[
\lbr \L, \L\rbr \subseteq \Oc \quad \text{where } \langle \L, \L\rangle:=\{\langle v, w\rangle \mid v, w \in \L\},
\]
and $\bi-form$ induces a non-degenerate alternating form on $\L/\varpi\L$ over $\Fb_q$.

Let $\L_i$ for $i=1, 2$ be lattices in $V$,
and
we say that they are homothetic
if
\[
\L_1 =\a \L_2 \quad \text{for some $\a \in F^\times$}.
\]
This defines an equivalence relation in the set of lattices in $V$.
We denote the homothety class of a lattice $\L$ by $[\L]$.
Let us define the set $\Lc_n$ of homothety classes $[\L]$ of lattices
such that there exist a representative $\L$ of $[\L]$ and a primitive lattice $\L_0$ 
satisfying that
\[
\varpi \L_0 \subseteq \L \subseteq \L_0 \quad \text{and} \quad \lbr \L, \L\rbr \subseteq \varpi \Oc.
\]
By the definition, if $[\L] \in \Lc_n$, then a representative $\L$ yields a subspace $\L/\varpi \L_0$ of $\L_0/\varpi \L_0$
with some primitive lattice $\L_0$
such that it is \textit{totally isotropic}, i.e., the induced non-degenerate alternating form $\bi-form$ vanishes on $\L/\varpi \L_0$ in $\L_0/\varpi\L_0$.
Further we define the incidence relation in $\Lc_n$ and denote by $[\L_1] \sim [\L_2]$ for two distinct homothety classes
if there exist representatives $\L_i$ of $[\L_i]$ for $i=1, 2$ and a primitive lattice $\L_0$
such that
\[
\varpi \L_0 \subseteq \L_i \subseteq \L_0 \quad \text{for $i=1, 2$},
\]
and either $\L_1 \subseteq \L_2$ or $\L_2 \subseteq \L_1$ holds.

The \textit{Bruhat-Tits building} $\Bc_n$ (in short, {\it building}) for the group $PGSp_n(F)$ (or $Sp_n(F)$)
is the clique complex whose set of vertices $\Ver(\Bc_n)$ is $\Lc_n$,
i.e., $\s \subset \Ver(\Bc_n)$ defines a simplex if any distinct vertices in $\s$ are incident.
The building $\Bc_n$ is a simplicial complex of dimension $n$;
note that each chamber (i.e., a simplex of maximal dimension) $[\L_0], [\L_1], \dots, [\L_n]$ corresponds to 
a sequence of lattices 
\[
\L_0 \subseteq \L_1 \subseteq \cdots \subseteq \L_n \subseteq \varpi^{-1}\L_0,
\]
where $\varpi^{-1}\L_0$ is primitive, 
such that
\[
\{0\} \subseteq \L_1/\L_0 \subseteq \L_2/\L_0 \subseteq \cdots \subseteq \L_n/\L_0 \subseteq \varpi^{-1}\L_0/\L_0
\]
forms a complete flag of a maximal totally isotropic subspace $\L_n/\L_0$ in $\varpi^{-1}\L_0/\L_0$ over $\Fb_q$.

The group $Sp_n(F)$ acts on $\Bc_n$ as simplicial automorphisms:
let us fix a symplectic basis $\{v_1, \dots, v_n, w_1, \dots, w_n\}$ of $(V, \bi-form)$,
which we identify with the standard symplectic space over $F$.
Then the action is defined by $[\L] \mapsto [M \L]$ for $[\L] \in \Ver(\Bc_n)$ and $M \in Sp_n(F)$,
and this action is simplicial since it preserves the incidence relation.
Moreover, this yields the action of the projectivised group $PSp_n(F)$ on $\Bc_n$.

We define the label (or, color) on the set of vertices $\Ver(\Bc_n)$.
For any lattice $\L$, there exists some $\g \in GL_{2n}(F)$
such that $\g u_1, \dots, \g w_n$
form an $\Oc$-basis of $\L$.
Let
\[
\lab_n[\L]:=\ord_\varpi(\det \g) \mod 2n.
\]
Note that this depends only on the homothety class of $\L$
since $\det (\a \g)=\a^{2n}\det(\g)$ for $\a \in F^\times$ and for $\g \in GL_{2n}(F)$, and $\det \g \in \Oc^\times$ for $\g \in GL_{2n}(\Oc)$. 
Hence the function $\lab_n:\Ver(\Bc_n) \to \Z/2n\Z$ is well-defined and we call $\lab_n[\L]$ the \textit{label} of a vertex $[\L] \in \Ver(\Bc_n)$.
For example, let us consider a sequence of lattices $\L_0, \dots, \L_n$, where
\begin{equation}\label{Eq:Lk}
\L_k:=\Oc u_1\oplus \cdots \oplus \Oc u_k \oplus \Oc \varpi u_{k+1}\oplus \cdots \oplus \Oc \varpi w_1\oplus \cdots \oplus \Oc \varpi w_n \quad \text{for $0 \le k <n$},
\end{equation}
and
$\L_n:=\Oc u_1\oplus \cdots \oplus \Oc u_n \oplus \Oc \varpi w_1\oplus \cdots \oplus \Oc \varpi w_n$.
Then
$\L_0 \subseteq \cdots \subseteq \L_n \subseteq \varpi^{-1}\L_0$ and $\varpi^{-1}\L_0$ is primitive,
and since the chain $\L_1/\L_0 \subseteq \cdots \subseteq \L_n/\L_0$ forms a maximal totally isotropic flag in $\varpi^{-1}\L_0/\L_0$ over $\Fb_q$,
the corresponding homothety classes
$[\L_0], \dots, [\L_n]$ define a chamber in $\Bc_n$.
In this case, we have that $\lab_n[\L_k]=2n-k \mod 2n$ for $0 \le k \le n$.
We call the chamber determined by $[\L_0], \dots, [\L_n]$ the \textit{fundamental chamber} $\Cc_0$.
Here we note that $\lab_n$ misses the values $1, \dots, n-1$ in $\Z/2n \Z$.
It is known that $Sp_n(F)$ acts transitively on the set of \textit{chambers} \cite[Section 20.5]{Garrett},
i.e., every chamber is of the form $\g \Cc_0$ for $\g \in Sp_n(F)$.
By definition, the action of $Sp_n(F)$ preserves the labels on $\Ver(\Bc_n)$.
It thus implies that the action is not vertex-transitive for any $n \ge 1$.

\subsection{Apartments}

Let us introduce a system of apartments in the building $\Bc_n$, following \cite[Chapter 20]{Garrett} and \cite{ShemanskeSp}.
A \textit{frame} is an unordered $n$-tuple,
\[
\{\lambda_1^1, \lambda_1^2\}, \dots, \{\lambda_n^1, \lambda_n^2\},
\]
such that each $\{\lambda_i^1, \lambda_i^2\}$ is an unordered pair of lines
which span a $2$-dimensional symplectic subspace with the induced alternating form
for $i=1, \dots, n$, and
\[
V=V_1\oplus \cdots \oplus V_n \quad \text{where $V_i:=\lambda_i^1\oplus\lambda_i^2$ and $V_i \bot V_j$ if $i \neq j$},
\]
i.e., $\langle v, v'\rangle=0$ for all $v \in V_i$ and all $v' \in V_j$ if $i \neq j$.
An \textit{apartment} defined by a frame $\{\lambda_i^1, \lambda_i^2\}$ for $i=1, \dots, n$
is a maximal subcomplex of $\Bc_n$ on the set of vertices $[\L]$ such that
\[
\L=\bigoplus_{i=1}^n \(M_i^1\oplus M_i^2\) \quad \text{where $M_i^j$ is a rank one free $\Oc$-module in $\lambda_i^j$ for $j=1, 2$},
\]
for some (equivalently, every) representative $\L$ in the homothety class.
We define a system of apartments as a maximal set of apartments.

Following \cite{ShemanskeSp},
we fix a symplectic basis $\{u_1, \dots, u_n, w_1, \dots, w_n\}$ of $V$ and a uniformizer $\varpi$ in $F$
and lighten the notation:
we denote a lattice
\[
\L=\Oc \varpi^{a_1}u_1\oplus \cdots \oplus \Oc \varpi^{a_n}u_n \oplus \Oc \varpi^{b_1}w_1\oplus \cdots \oplus \Oc \varpi^{b_n}w_n
\quad \text{for $a_i, b_i \in \Z$, $i=1, \dots, n$},
\]
by $\L=(a_1, \dots, a_n ; b_1, \dots, b_n)$,
and the homothety class by $[\L]=[a_1, \dots, a_n ; b_1, \dots, b_n]$.
For $\L$,
we have $\langle \L, \L\rangle \subset \Oc$ if and only if 
$\langle \varpi^{a_i}u_i, \varpi^{b_i}w_i\rangle=\varpi^{a_i+b_i} \in \Oc$ for all $i=1, \dots, n$.
This is equivalent to that 
$a_i+b_i\ge 0$ for all $i=1, \dots, n$, in which case,
$\L/\varpi\L$ is a non-degenerate alternating space with the induced form over the residue field $\Oc/\varpi\Oc$
if and only if $a_i+b_i=0$ for all $i=1, \dots, n$.

For the fixed basis,
let
$\lambda_i^1:=F u_i$ and $\lambda_i^2:=F w_i$ for $i=1, \dots, n$.
The frame $\{\lambda_i^1, \lambda_i^2\}_{i=1, \dots, n}$ determines an apartment $\SS_0$ in the building $\Bc_n$ for $Sp_n(F)$.
We call $\SS_0$ the \textit{fundamental apartment}.
The chain of lattice $\L_0 \subseteq \cdots \subseteq \L_n$ in \eqref{Eq:Lk} defines a chamber $\Cc_0$ in $\SS_0$ containing $[\L_0]$:
\begin{align*}
\L_0=(1, \dots, 1 ; 1, \dots, 1) &\subset (0, 1, 1, \dots, 1 ; 1, \dots, 1)\\
												&\subset (0, 0, 1, \dots, 1 ; 1, \dots, 1) \subset \cdots \subset (0, 0, \dots, 0 ; 1, \dots, 1) \subset \varpi^{-1}\L_0.
\end{align*}
Moreover, the following chain
\begin{align*}
\L_0=(1, \dots, 1 ; 1, \dots, 1) &\subset (1, 0, 1, \dots, 1 ; 1, \dots, 1)\\
												&\subset (0, 0, 1, \dots, 1 ; 1, \dots, 1) \subset \cdots \subset (0, 0, \dots, 0 ; 1, \dots, 1) \subset \varpi^{-1}\L_0,
\end{align*}
where the lattices are the same as above except for the second one,
defines a chamber which shares a codimension one face with $\Cc_0$.

We shall see the rest of chambers in the apartment $\SS_0$ by an action of the affine Weyl group attached to the building.
Denoting by $\Nc_0$ and by $\Ic_0$ the subgroups preserving $\SS_0$ (as a set) and $\Cc_0$ (pointwise) in $Sp_n(F)$ respectively,
the affine Weyl group is isomorphic to $\Nc_0/(\Nc_0\cap \Ic_0)$, which naturally acts on the chambers in $\SS_0$ transitively.
For $Sp_n(F)$, the affine Weyl group is of type $\wt C_n$ with the Coxeter diagram 
\begin{center}
\scalebox{0.9}[0.9]{
\begin{tikzpicture}
\draw[very thick, double] (0, 0) -- (2, 0);
\draw[very thick] (2, 0) -- (4, 0);
\draw[very thick, dotted] (4, 0) -- (6, 0);
\draw[very thick] (6, 0) -- (8, 0);
\draw[very thick, double] (8, 0) -- (10, 0);
\filldraw[black] (0, 0) circle (2pt)
node[anchor=south]{$1$};
\filldraw[black] (2, 0) circle (2pt)
node[anchor=south]{$2$};
\filldraw[black] (4, 0) circle (2pt)
node[anchor=south]{$3$};
\filldraw[black] (6, 0) circle (2pt)
node[anchor=south]{$n-1$};
\filldraw[black] (8, 0) circle (2pt)
node[anchor=south]{$n$};
\filldraw[black] (10, 0) circle (2pt)
node[anchor=south]{$n+1$};
\end{tikzpicture}
}
\end{center}
on $(n+1)$ vertices.
Each vertex $i$ in the Coxeter diagram corresponds to a reflection $s_i$ satisfying that
$s_i^2=1$ and $s_i s_j$ has order $m_{ij}$,
where
\[
m_{12}=m_{n(n+1)}=4, \quad m_{i(i+1)}=3 \quad \text{for $i \neq 1, n$, and} \quad m_{ij}=2 \quad \text{otherwise}.
\]
The affine Weyl group of type $\wt C_n$ is generated by $s_1, \dots, s_{n+1}$.
Given the symplectic basis $\{u_1, \dots, u_n, w_1, \dots, w_n\}$, the action on it is realized as in the following:
\begin{itemize}
\item[] $s_1$ exchanges $u_n$ and $w_n$, and fixes the others,
\item[] $s_j$ $(2 \le j \le n)$ exchanges $u_{n-j+1}$ and $u_{n-j+2}$, and $w_{n-j+1}$ and $w_{n-j+2}$ simultaneously and fixes the others, and
\item[] $s_{n+1}$ maps $u_1$ to $\varpi w_1$ and $w_1$ to $\varpi^{-1}u_1$ and fixes the others.
\end{itemize}
In the fundamental apartment $\SS_0$, denoting a vertex by $[a_1, \dots, a_n ; b_1, \dots, b_n]$,
we have that
\begin{align*}
&s_1[a_1, \dots, a_n ; b_1, \dots, b_n]=[a_1, \dots, a_{n-1}, b_n ; b_1, \dots, b_{n-1}, a_n],\\
&s_j[a_1, \dots, a_n ; b_1, \dots, b_n]\\
&=[a_1, \dots, a_{n-j+2}, a_{n-j+1}, \dots, a_n ; b_1, \dots, b_{n-j+2}, b_{n-j+1}, \dots, b_n] \quad \text{for $2 \le j \le n$},\\
&\text{and}\\
&s_{n+1}[a_1, \dots, a_n ; b_1, \dots, b_n]=[b_1-1, a_2, \dots, a_n ; a_1+1, b_2, \dots, b_n].
\end{align*}
A direct computation shows that $s_1, \dots, s_{n+1}$ satisfy the indicated Coxeter data.
Deleting either $s_1$ or $s_{n+1}$ yields a group isomorphic to the spherical Weyl group of type $C_n$.
Note that
the vertex $v=[a_1, \dots, a_n ; b_1, \dots, b_n]$ is fixed by $s_i$ for $1 \le i \le n$
if and only if $a_i=b_j$ for all $1 \le i, j \le n$,
and $v$ is fixed by $s_i$ for $2 \le i \le n+1$ if and only if $a_i=b_i-1$ for all $1 \le i \le n$.
In the fundamental chamber $\Cc_0$, such vertices are $[\L_0]=[1, \dots, 1 ; 1, \dots, 1]$ and $[\L_n]=[0, \dots, 0 ; 1, \dots, 1]$ respectively. 
Although we do not use the fact, it is useful to note that the spherical Weyl group $C_n$ is isomorphic to the signed permutation group $(\Z/2 \Z)^n \rtimes \mathfrak{S}_n$ 
whose order is $2^n n!$.

\begin{exa}
If $n=2$, then we have $8$ chambers containing vertex $[\L_0]=[1,1;1,1]$ in a fixed apartment,
where the fundamental chamber $\Cc_0$ is defined by the chain
\[
\L_0=(1,1;1,1) \subset (0,1;1,1) \subset (0,0;1,1) \subset (0,0;0,0)=\varpi^{-1}\L_0.
\]
The locations of chambers $\Cc_0, s_1\Cc_0, s_2 \Cc_0$ and $s_3 \Cc_0$ are indicated for generators $s_1, s_2, s_3$ of $\wt C_2$ in Figure \ref{Fig:apartment}.

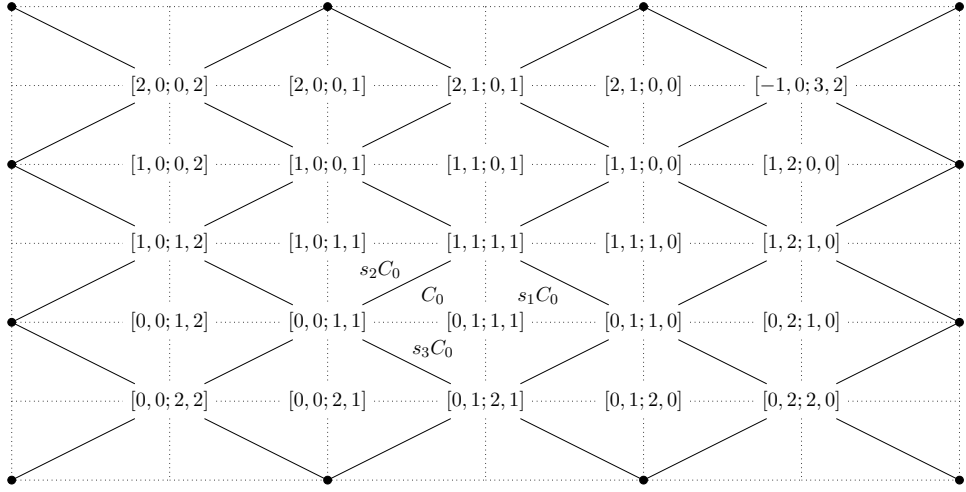
\begin{figure}\label{Fig:apartment}
\centering
\scalebox{0.7}[0.7]{
\begin{tikzpicture}[dot/.style={draw, circle, scale=0.4, fill=black}]

\node[dot] at (-9, 4.5) (-9, 4.5) {};
\node[] at (-6, 4.5) (-6, 4.5) {};
\node[dot] at (-3, 4.5) (-3, 4.5) {};
\node[] at (0, 4.5) (0, 4.5) {};
\node[dot] at (3, 4.5) (3, 4.5) {};
\node[] at (6, 4.5) (6, 4.5) {};
\node[dot] at (9, 4.5) (9, 4.5) {};

\node[] at (-9, 3) (-9, 3) {};
\node at (-6, 3) (2002) {$[2,0;0,2]$};
\node at (-3, 3) (2001) {$[2,0;0,1]$};
\node at (0, 3) (2101) {$[2,1;0,1]$};
\node at (3, 3) (2100) {$[2,1;0,0]$};
\node at (6, 3) (-1032) {$[-1,0;3,2]$};
\node[] at (9, 3) (9, 3) {};

\node[dot] at (-9, 1.5) (-9, 1.5) {};
\node at (-6, 1.5) (1002) {$[1,0;0,2]$};
\node at (-3, 1.5) (1001) {$[1,0;0,1]$};
\node at (0, 1.5) (1101) {$[1,1;0,1]$};
\node at (3, 1.5) (1100) {$[1,1;0,0]$};
\node at (6, 1.5) (1200) {$[1,2;0,0]$};
\node[dot] at (9, 1.5) (9, 1.5) {};

\node[] at (-9, 0) (-9, 0) {};
\node at (-6, 0) (1012) {$[1,0;1,2]$};
\node at (-3, 0) (1011) {$[1,0;1,1]$};
\node at (0, 0) (1111) {$[1,1;1,1]$};
\node at (3, 0) (1110) {$[1,1;1,0]$};
\node at (6, 0) (1210) {$[1,2;1,0]$};
\node[] at (9, 0) (9, 0) {};

\node[dot] at (-9, -1.5) (-9, -1.5) {};
\node at (-6, -1.5) (0012) {$[0,0;1,2]$};
\node at (-3, -1.5) (0011) {$[0,0;1,1]$};
\node at (0, -1.5) (0111) {$[0,1;1,1]$};
\node at (-1, -1) (C0) {$C_0$};
\node at (1, -1) (s1C0) {$s_1 C_0$};
\node at (-2, -0.5) (s2C0) {$s_2 C_0$};
\node at (-1, -2.0) (s3C0) {$s_3 C_0$};
\node at (3, -1.5) (0110) {$[0,1;1,0]$};
\node at (6, -1.5) (0210) {$[0,2;1,0]$};
\node[dot] at (9, -1.5) (9, -1.5) {};

\node[] at (-9, -3) (-9, -3) {};
\node at (-6, -3) (0022) {$[0,0;2,2]$};
\node at (-3, -3) (0021) {$[0,0;2,1]$};
\node at (0, -3) (0121) {$[0,1;2,1]$};
\node at (3, -3) (0120) {$[0,1;2,0]$};
\node at (6, -3) (0220) {$[0,2;2,0]$};
\node[] at (9, -3) (9, -3) {};

\node[dot] at (-9, -4.5) (-9, -4.5) {};
\node[] at (-6, -4.5) (-6, -4.5) {};
\node[dot] at (-3, -4.5) (-3, -4.5) {};
\node[] at (0, -4.5) (0, -4.5) {};
\node[dot] at (3, -4.5) (3, -4.5) {};
\node[] at (6, -4.5) (6, -4.5) {};
\node[dot] at (9, -4.5) (9, -4.5) {};


\draw[dotted] (-9, 4.5) -- (-6, 4.5);
\draw[dotted] (-6, 4.5) -- (-3, 4.5);
\draw[dotted] (-3, 4.5) -- (0, 4.5);
\draw[dotted] (0, 4.5) -- (3, 4.5);
\draw[dotted] (3, 4.5) -- (6, 4.5);
\draw[dotted] (6, 4.5) -- (9, 4.5);

\draw[dotted] (-9, 3) -- (2002);
\draw[dotted] (2002) -- (2001);
\draw[dotted] (2001) -- (2101);
\draw[dotted] (2101) -- (2100);
\draw[dotted] (2100) -- (-1032);
\draw[dotted] (-1032) -- (9, 3);

\draw[dotted] (-9, 1.5) -- (1002);
\draw[dotted] (1002) -- (1001);
\draw[dotted] (1001) -- (1101);
\draw[dotted] (1101) -- (1100);
\draw[dotted] (1100) -- (1200);
\draw[dotted] (1200) -- (9, 1.5);

\draw[dotted] (-9, 0) -- (1012);
\draw[dotted] (1012) -- (1011);
\draw[dotted] (1011) -- (1111);
\draw[dotted] (1111) -- (1110);
\draw[dotted] (1110) -- (1210);
\draw[dotted] (1210) -- (9, 0);

\draw[dotted] (-9, -1.5) -- (0012);
\draw[dotted] (0012) -- (0011);
\draw[dotted] (0011) -- (0111);
\draw[dotted] (0111) -- (0110);
\draw[dotted] (0110) -- (0210);
\draw[dotted] (0210) -- (9, -1.5);

\draw[dotted] (-9, -3) -- (0022);
\draw[dotted] (0022) -- (0021);
\draw[dotted] (0021) -- (0121);
\draw[dotted] (0121) -- (0120);
\draw[dotted] (0120) -- (0220);
\draw[dotted] (0220) -- (9, -3);

\draw[dotted] (-9, -4.5) -- (-6, -4.5);
\draw[dotted] (-6, -4.5) -- (-3, -4.5);
\draw[dotted] (-3, -4.5) -- (0, -4.5);
\draw[dotted] (0, -4.5) -- (3, -4.5);
\draw[dotted] (3, -4.5) -- (6, -4.5);
\draw[dotted] (6, -4.5) -- (9, -4.5);


\draw[dotted] (-9, 4.5) -- (-9, 3);
\draw[dotted] (-6, 4.5) -- (2002);
\draw[dotted] (-3, 4.5) -- (2001);
\draw[dotted] (0, 4.5) -- (2101);
\draw[dotted] (3, 4.5) -- (2100);
\draw[dotted] (6, 4.5) -- (-1032);
\draw[dotted] (9, 4.5) -- (9, 3);

\draw[dotted] (-9, 3) -- (-9, 1.5);
\draw[dotted] (2002) -- (1002);
\draw[dotted] (2001) -- (1001);
\draw[dotted] (2101) -- (1101);
\draw[dotted] (2100) -- (1100);
\draw[dotted] (-1032) -- (1200);
\draw[dotted] (9, 3) -- (9, 1.5);

\draw[dotted] (-9, 1.5) -- (-9, 0);
\draw[dotted] (1002) -- (1012);
\draw[dotted] (1001) -- (1011);
\draw[dotted] (1101) -- (1111);
\draw[dotted] (1100) -- (1110);
\draw[dotted] (1200) -- (1210);
\draw[dotted] (9, 1.5) -- (9, 0);

\draw[dotted] (-9, 0) -- (-9, -1.5);
\draw[dotted] (1012) -- (0012);
\draw[dotted] (1011) -- (0011);
\draw[dotted] (1111) -- (0111);
\draw[dotted] (1110) -- (0110);
\draw[dotted] (1210) -- (0210);
\draw[dotted] (9, 0) -- (9, -1.5);

\draw[dotted] (-9, -1.5) -- (-9, -3);
\draw[dotted] (0012) -- (0022);
\draw[dotted] (0011) -- (0021);
\draw[dotted] (0111) -- (0121);
\draw[dotted] (0110) -- (0120);
\draw[dotted] (0210) -- (0220);
\draw[dotted] (9, -1.5) -- (9, -3);

\draw[dotted] (-9, -3) -- (-9, -4.5);
\draw[dotted] (0022) -- (-6, -4.5);
\draw[dotted] (0021) -- (-3, -4.5);
\draw[dotted] (0121) -- (0, -4.5);
\draw[dotted] (0120) -- (3, -4.5);
\draw[dotted] (0220) -- (6, -4.5);
\draw[dotted] (9, -3) -- (9, -4.5);


\draw[] (-1032) -- (9, 4.5);
\draw[] (-1032) -- (3, 4.5);
\draw[] (2101) -- (3, 4.5);
\draw[] (2101) -- (-3, 4.5);
\draw[] (2002) -- (-3, 4.5);
\draw[] (2002) -- (-9, 4.5);

\draw[] (-1032) -- (9, 1.5);
\draw[] (-1032) -- (1100);
\draw[] (2101) -- (1100);
\draw[] (2101) -- (1001);
\draw[] (2002) -- (1001);
\draw[] (2002) -- (-9, 1.5);

\draw[] (1012) -- (-9, 1.5);
\draw[] (1012) -- (1001);
\draw[] (1001) -- (1111);
\draw[] (1111) -- (1100);
\draw[] (1100) -- (1210);
\draw[] (9, 1.5) -- (1210);

\draw[] (1012) -- (-9, -1.5);
\draw[] (1012) -- (0011);
\draw[] (0011) -- (1111);
\draw[] (1111) -- (0110);
\draw[] (0110) -- (1210);
\draw[] (1210) -- (9, -1.5);

\draw[] (0022) -- (-9, -1.5);
\draw[] (0022) -- (0011);
\draw[] (0011) -- (0121);
\draw[] (0121) -- (0110);
\draw[] (0110) -- (0220);
\draw[] (9, -1.5) -- (0220);

\draw[] (-9, -4.5) -- (0022);
\draw[] (0022) -- (-3, -4.5);
\draw[] (-3, -4.5) -- (0121);
\draw[] (0121) -- (3, -4.5);
\draw[] (3, -4.5) -- (0220);
\draw[] (0220) -- (9, -4.5);

\end{tikzpicture}
}
\caption{A part of the fundamental apartment $\SS_0$ with the chambers $\Cc_0, s_1 \Cc_0, s_2 \Cc_0$ and $s_3 \Cc_0$ for $n=2$.}
\end{figure}
\end{exa}

\subsection{Self-dual vertices}

For any lattice $\L$ in a symplectic space $(V, \bi-form)$, let us define the dual by
\[
\L^\ast:=\{v \in V \mid \langle v, w\rangle \in \Oc \ \text{for all $w \in \L$}\}.
\]
Note that $\L^\ast$ is also a lattice in $V$.
For every $\a \in F^\times$, we have that $(\a \L)^\ast=\a^{-1}\L^\ast$,
whence the homothety class $[\L^\ast]$ depends only on $[\L]$.
Let us call a vertex $[\L]$ in the building $\Bc_n$ \textit{self-dual} if $[\L^\ast]=[\L]$.
Below we characterize self-dual vertices in terms of labels---it is essentially proved in \cite[Proposition 3.1]{ShemanskeSp}; we give a proof for the sake of completeness.

\begin{lem}\label{Lem:dual}
Fix an integer $n \ge 1$.
For $[\L] \in \Ver(\Bc_n)$,
we have that $[\L^\ast]=[\L]$ if and only if $\lab_n[\L]=0$ or $n \mod 2n$.
\end{lem}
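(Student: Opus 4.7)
The plan is to prove the ``only if'' direction by a direct label computation and the ``if'' direction by using the specific lattice description of building vertices.

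\medskip

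\noindent\textbf{Step 1 (behavior of the label under duality).}
First I would compute how duality acts on $\lab_n$. If $\L = \g\, \Oc^{2n}$ for some $\g \in GL_{2n}(F)$, then unwinding the definition of $\L^\ast$ with respect to the standard symplectic form $\lbr v,w\rbr = \tr^v J_n w$, one gets $\L^\ast = J_n (\g^t)^{-1}\Oc^{2n}$. Since $\det J_n = 1$, this yields $\ord_\varpi \det J_n (\g^t)^{-1} = -\ord_\varpi \det \g$, so
\[
\lab_n[\L^\ast] \equiv -\lab_n[\L] \pmod{2n}.
\]
In particular, if $[\L^\ast]=[\L]$ then $2\lab_n[\L] \equiv 0 \pmod{2n}$, forcing $\lab_n[\L] \in \{0,n\}$. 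This handles the ``only if'' direction.

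\medskip

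\noindent\textbf{Step 2 (parametrizing vertices by totally isotropic subspaces).}
For the ``if'' direction I would use the definition of $\Ver(\Bc_n)$: any vertex has a representative $\L$ with $\varpi \L_0 \subseteq \L \subseteq \L_0$ for some primitive $\L_0$ and $\lbr \L,\L\rbr \subseteq \varpi \Oc$. Since $\L_0$ is primitive it is self-dual, $\L_0^\ast = \L_0$. Setting $k = \dim_{\Fb_q}(\L/\varpi\L_0)$, a direct $\Oc$-basis count gives $\ord_\varpi \det \L = 2n - k$, hence
\[
\lab_n[\L] \equiv 2n-k \pmod{2n}.
\]
The isotropy condition $\lbr \L,\L\rbr \subseteq \varpi\Oc$ means $W := \L/\varpi\L_0$ is a totally isotropic subspace of the non-degenerate symplectic $\Fb_q$-space $\L_0/\varpi\L_0$ of dimension $2n$, so $k \le n$.

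\medskip

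\noindent\textbf{Step 3 (the two cases).}
If $\lab_n[\L] = 0$, then $k = 0$, so $\L = \varpi \L_0$ is homothetic to the self-dual lattice $\L_0$, giving $[\L^\ast] = [\L_0^\ast] = [\L_0] = [\L]$. If $\lab_n[\L] = n$, then $k = n$ and $W$ is a \emph{maximal} totally isotropic subspace of $\L_0/\varpi\L_0$, hence $W^\perp = W$. The plan here is to identify $\L^\ast/\L_0$ inside $\varpi^{-1}\L_0/\L_0 \cong \L_0/\varpi\L_0$ (via multiplication by $\varpi$) with $W^\perp = W$: an element $\varpi^{-1}v'$ with $v' \in \L_0$ lies in $\L^\ast$ iff $\lbr v',w\rbr \in \varpi\Oc$ for all $w \in \L$, iff $\bar v' \in W^\perp$. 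Self-duality of $\L_0$ and the isotropy of $\L$ simultaneously give $\L_0 \subseteq \L^\ast$ and $\varpi^{-1}\L \subseteq \L^\ast \subseteq \varpi^{-1}\L_0$. By dimension count, $\dim(\L^\ast/\L_0) = \dim W^\perp = n = \dim(\varpi^{-1}\L/\L_0)$, so $\L^\ast = \varpi^{-1}\L$ and thus $[\L^\ast]=[\L]$.

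\medskip

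\noindent\textbf{Main obstacle.}
The nontrivial step is the $\lab_n[\L]=n$ case: the label equality alone only constrains $\ord_\varpi \det \L$ modulo $2n$, and the actual equality $\L^\ast = \varpi^{-1}\L$ uses both the self-duality of the primitive enveloping lattice $\L_0$ and the fact that a maximal totally isotropic subspace equals its own symplectic perpendicular. Once these two inputs are in place, the identification of $\L^\ast/\L_0$ with $W^\perp \subset \L_0/\varpi\L_0$ under the $\varpi$-multiplication isomorphism is purely formal.
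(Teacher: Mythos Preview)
Your proof is correct. The ``only if'' direction is essentially identical to the paper's argument. For the ``if'' direction, however, you take a genuinely different route. The paper invokes the transitivity of $Sp_n(F)$ on chambers (a building-theoretic fact cited from Garrett) together with the label-preservation of this action: any vertex of label $0$ or $n$ is an $Sp_n(F)$-translate of $[\L_0]$ or $[\L_n]$ in the fundamental chamber, and since duality commutes with the $Sp_n(F)$-action it suffices to check those two lattices. You instead work directly from the definition of $\Ver(\Bc_n)$, identifying $\L^\ast/\L_0$ with $W^\perp$ inside $\L_0/\varpi\L_0$ and using that a Lagrangian subspace equals its own perpendicular. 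Your argument is more self-contained---it avoids the chamber-transitivity black box and makes the role of the maximal-isotropic condition fully explicit---while the paper's is shorter once that transitivity is granted. One small remark: in Step~2 your label formula $\lab_n[\L]\equiv 2n-k$ tacitly uses that a primitive lattice $\L_0$ has $\ord_\varpi\det\g_0=0$ when $\L_0=\g_0\Oc^{2n}$; this follows since non-degeneracy of the induced form on $\L_0/\varpi\L_0$ forces $\det({}^t\g_0 J_n \g_0)\in\Oc^\times$.
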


\proof
Fix a symplectic basis of the space over $F$ and identify the space with the standard symplectic space over $F$.
Let $\L_0, \dots, \L_n$ be the sequence of lattices \eqref{Eq:Lk} whose homothety classes form the fundamental chamber $\Cc_0$ in the building $\Bc_n$.
Note that $\L_0$ is primitive and $\L_0^\ast=\L_0$.
For any lattice $\L$, there exist $\g_1, \g_2 \in GL_{2n}(F)$ such that $\L=\g_1 \L_0$ and $\L^\ast=\g_2 \L_0$.
Since $\langle \L^\ast, \L\rangle \subset \Oc$, we have that $\tr^\g_1 \g_2 \in GL_{2n}(\Oc)$,
and thus $\det \tr^\g_1 \g_2 \in \Oc^\times$.
This implies that
\[
\lab_n[\L^\ast]=\ord_\varpi (\det \g_2)=-\ord_\varpi(\det \g_1)=-\lab_n[\L] \mod 2n.
\]
Therefore if $[\L^\ast]=[\L]$, then $2 \lab_n[\L]=0 \mod 2n$, i.e., $\lab_n[\L]=0$ or $n \mod 2n$.
Conversely if $\lab_n[\L]=0$ or $n \mod 2n$,
then $\L=\g\L_0$ or $\g \L_n$ for some $\g \in Sp_n(F)$ since $Sp_n(F)$ acts on $\Bc_n$ transitively on chambers and preserves the labels of vertices.
Noting that $[\L_0^\ast]=[\L_0]$ and $[\L_n^\ast]=[\L_n]$, as well as $\L^\ast=\g \L_0^\ast$ if $\L=\g \L_0$, and $\L^\ast=\g \L_n^\ast$ if $\L=\g\L_n$ for $\g \in Sp_n(F)$,
we conclude that $[\L^\ast]=[\L]$, as required.
\qed

\begin{remark}
If $n \ge 2$, then
for the vertices $[\L] \in \Ver(\Bc_n)$ with $\lab_n[\L] \neq 0, n \mod 2n$,
the homothety class $[\L^\ast]$ does not define a vertex, i.e., $[\L^\ast] \notin \Ver(\Bc_n)$.
Indeed, for the vertex $[\L]$ of label $k \mod 2n$, the homothety class of the dual $[\L^\ast]$ has the label $2n-k \mod 2n$.
For example, if $n=2$, then for the lattice $\L_1$ in \eqref{Eq:Lk}, we have
\[
\L_1^\ast=\Oc \varpi^{-1}u_1\oplus \Oc \varpi^{-1}u_2 \oplus \Oc w_1\oplus \Oc \varpi^{-1}w_2,
\]
and $[\L_1^\ast]$ has the label $1 \mod 2n$, and thus it does not belong to $\Ver(\Bc_2)$.
\end{remark}

\subsection{Special vertices and the special $1$-complex}\label{Sec:special}

For $[\L] \in \Ver(\Bc_n)$,
let us call $[\L]$ a \textit{special vertex} if $[\L^\ast]=[\L]$.
We define the \textit{special $1$-complex} $\Sc_n$ as a $1$-dimensional subcomplex of $\Bc_n$ based on
the set of special vertices
\[
\Ver(\Sc_n):=\big\{[\L] \in \Ver(\Bc_n) \mid [\L^\ast]=[\L]\big\},
\]
and $1$-simplices (edges) are defined between two incident vertices in $\Bc_n$ (cf.\ Section \ref{Sec:symplectic_group}):
for $[\L_1]$, $[\L_2]$ in $\Ver(\Sc_n)$,
we have $[\L_1] \sim [\L_2]$ if and only if there exist representatives $\L_1$ and $\L_2$ from $[\L_1]$ and $[\L_2]$ respectively such that either
$\varpi^{-1}\L_1$ is primitive and $\L_1 \subseteq \L_2 \subseteq \varpi^{-1}\L_1$,
or the analogous relation where the roles of $\L_1$ and $\L_2$ are interchanged holds.
Note that since special vertices are those that are self-dual,
if $\varpi^{-1}\L_1$ is primitive,
then $\L_2/\L_1$ is a maximal totally isotropic subspace of $\varpi^{-1}\L_1/\L_1$ over $\Fb_q$.

Lemma \ref{Lem:dual} shows that $[\L] \in \Ver(\Sc_n)$ if and only if $\lab_n[\L]=0$ or $n \mod 2n$,
and we will see that $\Sc_n$ is connected (Proposition \ref{Prop:connected} below).
Although we do not use it in our main discussion,
it is useful to point out here that $\Sc_n$ admits a structure of bipartite graph.
Namely,
if we decompose the set of vertices into two sets: the one of those with label $0 \mod 2n$ and the other of those with label $n \mod 2n$,
then the two extreme vertices of each edge have distinct labels.

We note that $GSp_n(F)$ does not act on $\Bc_n$ through the linear transformation of lattices.
Indeed, a vertex of label $2n-1 \mod 2n$ in the fundamental chamber $\Cc_0$ is sent by $t_\varpi \in GSp_n(F)$
to a vertex of label $n-1 \mod 2n$, which does not belong to $\Ver(\Bc_n)$.
However, restricted on $\Sc_n$, the group $GSp_n(F)$ acts on $\Sc_n$.
Moreover, the action of $GSp_n(F)$ on $\Sc_n$ is vertex-transitive
since for $t_\varpi=\diag(1, \dots, 1, \varpi, \dots, \varpi)$ in $GSp_n(F)$,
we have that
\[
t_\varpi[\L_0]=[\L_n] \quad \text{where 
$
t_\varpi=\(
\begin{array}{cc}
I_n & 0\\
0 & \varpi I_n
\end{array}
\)
$
and $[\L_0], [\L_n] \in \Cc_0$.
}
\]
Note that $t_\varpi$ permutes the labels on $\Ver(\Sc_n)$.
This defines the action of $PGSp_n(F)$ on $\Sc_n$.
Letting $o:=[\L_0]$, we identify the stabilizer of $o$ in $PGSp_n(F)$ with $K:=PGSp_n(\Oc)$.
If we define
\[
\Sc_n^o:=Sp_n(F)o \quad \text{and} \quad \Sc_n^{o-}:=t_\varpi \Sc_n^o,
\]
then 
\[
\Ver(\Sc_n)=\Sc_n^o\bigsqcup \Sc_n^{o-},
\]
and every edge in $\Sc_n$ has one vertex in $\Sc_n^o$ and the other vertex in $\Sc_n^{o-}$.
The following proposition has been shown by Shemanske; we give a proof for the sake of convenience.

\begin{prop}[Proposition 3.6 in \cite{ShemanskeSp}]\label{Prop:connected}
For every integer $n \ge 1$, the special $1$-complex $\Sc_n$ is connected.
\end{prop}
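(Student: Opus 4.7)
The plan is to exploit the bipartite decomposition $\Ver(\Sc_n) = \Sc_n^o \sqcup \Sc_n^{o-}$ (vertices of label $0$ versus $n \mod 2n$, by Lemma \ref{Lem:dual}) together with the well-known gallery connectivity of the ambient building $\Bc_n$. The key idea is that every chamber of $\Bc_n$ directly supplies an edge of $\Sc_n$ joining its two special vertices, and that two chambers sharing a codimension-one face keep their special vertices in the same connected component.

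First I would verify that in the fundamental chamber $\Cc_0$ the two special vertices $o = [\L_0]$ and $[\L_n]$ are adjacent in $\Sc_n$: the chain $\L_0 \subseteq \L_n \subseteq \varpi^{-1}\L_0$ realizes the defining incidence and $\varpi^{-1}\L_0$ is primitive by construction. Since $Sp_n(F)$ acts transitively on chambers of $\Bc_n$ and preserves both $\lab_n$ and the symplectic form (hence primitivity), it follows that every chamber of $\Bc_n$ contains exactly one special vertex of label $0$ and one of label $n$, and these two are joined by an edge of $\Sc_n$. Next I would analyze two chambers $C, C'$ sharing a codimension-one face: they differ in exactly one vertex, of some label $k \mod 2n$. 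If $k \notin \{0, n\}$, then $C$ and $C'$ share both of their special vertices and there is nothing to do. If $k = 0$, then $C$ and $C'$ share the same label-$n$ vertex, and each of their distinct label-$0$ vertices is adjacent in $\Sc_n$ to this common label-$n$ vertex by the previous step, producing a path of length at most two between them; the case $k = n$ is handled symmetrically.

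Finally, I would invoke gallery connectivity of $\Bc_n$: any two chambers are linked by a sequence of pairwise codimension-one adjacent chambers. Iterating the previous step along such a gallery yields a path in $\Sc_n$ between the special vertices of any two chambers, and since every vertex of $\Bc_n$ (in particular every special vertex) is contained in some chamber, every element of $\Ver(\Sc_n)$ is connected to $o$ in $\Sc_n$, proving connectivity. The one point requiring attention is that the adjacency relation in $\Sc_n$ demands a representative $\L$ with $\varpi^{-1}\L$ primitive (or the symmetric condition); this is automatic for the label-$0$ vertex of any chamber, since by definition the ``bottom'' lattice $\L$ of the chamber flag satisfies $\varpi^{-1}\L$ primitive. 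Beyond this routine check, no genuine obstacle arises: the argument is essentially a translation of chamber adjacency in $\Bc_n$ into edge adjacency in $\Sc_n$.
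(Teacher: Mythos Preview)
Your proposal is correct and follows essentially the same approach as the paper's proof: both arguments use gallery connectivity together with the observation that every chamber contains an edge of $\Sc_n$ (its label-$0$ and label-$n$ vertices), and that two chambers sharing a codimension-one face always share at least one special vertex. The only cosmetic difference is that the paper first reduces to a single apartment and phrases the gallery in terms of affine Weyl group reflections, whereas you invoke gallery connectivity of $\Bc_n$ directly and do an explicit case analysis on the label of the differing vertex; the underlying mechanism is identical.
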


\proof
Given two special vertices (which are not incident each other), let us take two chambers in such a way that each chamber contains either one or the other vertex.
Since for any two chambers there exists an apartment which contains both of them,
applying an isometry of the building if necessary, we may assume that they are within the fundamental apartment $\SS_0$,
and further one of them is the fundamental chamber $\Cc_0$.
Noting that each reflection in the affine Weyl group maps $\Cc_0$ to an adjacent chamber which shares at least one special vertex with $\Cc_0$.
The other chamber is obtained by a successive application of reflections to $\Cc_0$
and in the resulting sequence of chambers (called a gallery) we find an edge path (consisting of special vertices) connecting the given two special vertices.
This shows that any two special vertices are connected by an edge path in the subcomplex based on the special vertices, i.e., $\Sc_n$ is connected.
\qed

\section{Property (T) and spectral gaps}\label{PT}

\subsection{Property (T)}\label{Sec:PropertyT}

Let $G$ be a topological group
and $(\pi, \Hc)$ be a unitary representation of $G$,
where we assume that any Hilbert space $\Hc$ is complex.  
For any compact subset $Q$ in $G$, 
let
\[
\k(G, Q, \pi):=\inf\Big\{\max_{s \in Q}\|\pi(s)\f-\f\| \mid \f \in \Hc, \ \|\f\|=1\Big\},
\]
and further let
\begin{equation*}
\k(G, Q):=\inf \k(G, Q, \pi),
\end{equation*}
where the above infimum is taken over all equivalence classes of unitary representations $(\pi, \Hc)$ without non-zero invariant vectors.
We call $\k(G, Q)$ the {\it optimal Kazhdan constant} for the pair $(G, Q)$.
We say that $G$ has Property (T) if there exists a compact set $Q$ in $G$ such that $\k(G, Q)>0$.
It is known that for a local field $F$, if $n \ge 2$,
then $Sp_n(F)$ has Property (T),
while if $n=1$, then $Sp_1(F)=SL_2(F)$ and it fails to have Property (T) \cite[Theorem 1.5.3 and Example 1.7.4]{BdlHV}.

For any $n \ge 2$, $PSp_n(F)$ has Property (T) since $Sp_n(F)$ does \cite[Theorem 1.3.4]{BdlHV}.
Similarly, for any $n \ge 2$, the group $PGSp_n(F)$ has Property (T) since $PGSp_n(F)/PSp_n(F)$ admits a finite invariant Borel regular measure (see Section \ref{Sec:symplectic_group} and \cite[Theorem 1.7.1]{BdlHV}).
(We note that for any $n \ge 1$, the group $GSp_n(F)$ does not have Property (T) 
because it admits a surjective homomorphism onto $\Z$ \cite[Corollary 1.3.5]{BdlHV}.)

We say that a subset $Q$ of $G$ is \textit{generating}
if the sub-semigroup generated by $Q$ coincides with $G$.
If $G$ has Property (T) and $Q$ is an arbitrary compact generating set of $G$ (provided that it exists),
then $\k(G, Q)>0$ \cite[Proposition 1.3.2]{BdlHV}.
We will construct an appropriate compact generating set in the following.

\subsection{A random walk operator}\label{Sec:rwop}

In this section, fix an integer $n \ge 1$.
Recall that $K=PGSp_n(\Oc)$,
and letting $o:=[\L_0]$, we identify $K$ with the stabilizer of $o$ in $PGSp_n(F)$.
Let $a:=[t_\varpi] \in PGSp_n(F)$,
and let us choose $\x_i \in PSp_n(F) (\subset PGSp_n(F))$ for $i=0, 1, \dots, n+1$ such that $\x_0:=\id$ and for $i=1. \dots, n+1$ each $\x_i$ projects onto the reflection $s_i$ in the affine Weyl group acting on the fundamental apartment $\SS_0$.

Let us define a subset $\Omega:=\{k \x_i a k', k (\x_i a)^{-1}k' \mid k, k' \in K, i=0, \dots, n+1\}$ in $PGSp_n(F)$,
where we simply write
\[
\Omega= K \Omega_0 K, \quad \text{where $\Omega_0:=\{\x_0 a, \dots, \x_{n+1}a, (\x_0 a)^{-1}, \dots, (\x_{n+1}a)^{-1}\}$}.
\]
Note that $\Omega$ is compact and symmetric, i.e., $x \in \Omega$ if and only if $x^{-1} \in \Omega$.
Let $\n$ be a Haar measure on $K$ normalized so that $\n(K)=1$.
Let us define the probability measure $\m$ on $PGSp_n(F)$
as the distribution of $k \z k'$
where $k, k'$ and $\z$ are independent and $k, k'$ are distributed according to $\n$ and $\z$ is uniformly distributed on
$\{\x_i a, (\x_i a)^{-1} \mid i=0, \dots, n+1\}$. 
In other words,
\[
\m=\n\ast \Unif_{\Omega_0}\ast \n, \quad \text{where $\Unif_{\Omega_0}:=\frac{1}{2(n+2)}\sum_{i=0}^{n+1}\(\d_{\x_i a}+\d_{(\x_i a)^{-1}}\)$},
\]
and $\d_x$ denotes the Dirac distribution at $x$; furthermore the convolution $\m_1 \ast \m_2$ of two probability measures $\m_1, \m_2$ on a group $G$ is defined by
\[
\m_1\ast \m_2(A)=\m_1 \times \m_2\(\{(\g_1, \g_2) \in G \times G \mid \g_1 \g_2 \in A\}\),
\]
for any measurable set $A$ in $G$.
Note that the support of $\m$ is $\Omega$.
For any positive integers $t \ge 1$,
we denote by $\m^{\ast t}$ the $t$-th convolution power of $\m$, 
i.e., $\m^{\ast 1}:=\m$ and $\m^{\ast (t+1)}=\m^{\ast t} \ast \m$ for $t \ge 1$.
If we define the probability measure $\check \m$ on $PGSp_n(F)$ as the distribution of $x^{-1}$ where $x$ has the law $\m$,
then the definition of $\m$ implies that 
\begin{equation}\label{Eq:check}
\check \m=\m.
\end{equation}

\begin{lem}\label{Lem:mu}
We have the following:
\begin{itemize}
\item[(1)] The set $\Omega$ is generating in $PGSp_n(F)$, i.e., $\Omega$ generates $PGSp_n(F)$ as a semigroup.
\item[(2)] Fix an integer $n \ge 1$.
The double coset $K \backslash \Omega /K$ is represented by a finite set $\Omega_0=\{\x_i a, (\x_i a)^{-1}, i=0, \dots, n+1\}$ and
\[
\min_{K \g K \in K \backslash \Omega /K}\m(K \g K)=\frac{1}{2(n+2)}.
\]
Moreover, if $\g$ is distributed according to $\m$ on $PGSp_n(F)$,
then $\g o$ is uniformly distributed on the set of incident vertices to $o=[\L_0]$ in $\Sc_n$.
\end{itemize}
\end{lem}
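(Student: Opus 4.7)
A key preliminary used throughout is that $K = PGSp_n(\Oc)$ acts transitively on the $\prod_{k=1}^n(q^k+1)$ vertices of $\Sc_n$ incident to $o$: reduction modulo $\varpi$ gives a surjection of $K$ onto the projectivised $GSp_n(\Fb_q)$, and the latter acts transitively on maximal isotropic subspaces of $\varpi^{-1}\L_0/\L_0 \cong \Fb_q^{2n}$.

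For (1), I will first show $K \subseteq \langle \Omega \rangle$: any $k \in K$ factors as $k = (ka) \cdot a^{-1}$, with $ka \in K\x_0 a K \subseteq \Omega$ and $a^{-1} \in K(\x_0 a)^{-1}K \subseteq \Omega$. Hence $\langle \Omega \rangle$ acts on $\Sc_n$ with $K$ as the stabilizer of $o$, and its orbit of $o$ contains $K \cdot \zeta o$ for any $\zeta \in \Omega_0$; assuming that each such $\zeta o$ is an incident vertex of $o$ (verified in the next paragraph), the preliminary fact makes this $K$-orbit equal to the full set of vertices incident to $o$. An easy iteration then shows the orbit is closed under passing from a vertex to its incident vertices; by the connectedness of $\Sc_n$ (Proposition \ref{Prop:connected}), the orbit is all of $\Ver(\Sc_n)=PGSp_n(F)/K$, and combined with $K \subseteq \langle \Omega\rangle$ this yields $\langle \Omega\rangle=PGSp_n(F)$.

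For (2), the identity $\Omega = K\Omega_0 K$ shows that every double coset in $K\bs\Omega/K$ is represented by some $\zeta \in \Omega_0$; since $\m = \n \ast \Unif_{\Omega_0}\ast \n$ with $\n(K)=1$, for each such $K\g K$ we have $\m(K\g K)=|\{\zeta \in \Omega_0:\zeta \in K\g K\}|/(2(n+2)) \ge 1/(2(n+2))$. For the uniform distribution of $\g o$, write $\g = k\zeta k'$ with $k, k' \sim \n$ and $\zeta \sim \Unif_{\Omega_0}$, giving $\g o = k\zeta o$ since $k'$ fixes $o$. The central verification is that $\zeta o$ is an incident vertex of $o$ for every $\zeta \in \Omega_0$: using the explicit action of the simple reflections $s_i$ on the apartment coordinates $[a_1, \dots, a_n; b_1, \dots, b_n]$, one checks that $s_i$ fixes $[\L_n]=ao$ for $i \in \{0, 2, \dots, n, n+1\}$, giving $(\x_i a)o = ao$; for $i = 1$, $\x_1 \in K$ fixes $o$, so $(\x_1 a)o$ is incident to $o$ because $ao$ is; the inverses $(\x_i a)^{-1}o$ are handled symmetrically, where the identity $a^{-1}\x_{n+1}a \in K$ (a consequence of $\x_{n+1}$ fixing $ao$) disposes of the only delicate case. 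Combined with $K$-transitivity on incident vertices, $k\zeta o$ is uniformly distributed on the set of incident vertices of $o$ for every fixed $\zeta$, and averaging over $\zeta$ preserves the uniformity. The main obstacle in this plan is the incidence verification for $\zeta = \x_{n+1}a$ and $(\x_{n+1}a)^{-1}$, since $\x_{n+1}$ is the only simple reflection lift not fixing $o$; the identity $\x_{n+1}[\L_n] = [\L_n]$, read directly off the apartment formulas, salvages both.
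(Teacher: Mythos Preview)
Your proof is correct. Part (2) is essentially the same as the paper's argument: both verify case by case that $\zeta o$ is a neighbour of $o$ for every $\zeta\in\Omega_0$ (using that $s_i$ fixes $[\L_n]$ for $i\neq 1$, that $s_i$ fixes $o$ for $i\neq n+1$, and the trick $a^{-1}\xi_{n+1}^{\pm 1}a\in K$ for the remaining inverse), and then conclude by the transitivity of $K$ on the neighbours of $o$.

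For part (1) you take a genuinely different route from the paper. The paper argues structurally: it shows $\Omega\cdot\Omega$ contains $K_0=PSp_n(\Oc)$ and all $K_0\xi_iK_0$, observes that the latter generate $PSp_n(F)$ as a semigroup (since $K_0$ is transitive on the apartments through $o$ and the $s_i$ generate the affine Weyl group), and then passes to $PGSp_n(F)$ using that the quotient $PGSp_n(F)/PSp_n(F)$ is generated by the images of $a^{\pm 1}$ and $\{t_\lambda:\lambda\in\Oc^\times\}$. You instead run a direct orbit argument on $\Sc_n$: from $K\subset\langle\Omega\rangle$ (via $k=(ka)\cdot a^{-1}$) and the verification in (2), the $\langle\Omega\rangle$-orbit of $o$ contains all neighbours of $o$; a one-line induction then shows the orbit is closed under passing to neighbours, so by the connectedness of $\Sc_n$ (Proposition~\ref{Prop:connected}) it is all of $\Ver(\Sc_n)=PGSp_n(F)/K$, and combining with $K\subset\langle\Omega\rangle$ yields the whole group. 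Your approach is more self-contained---it avoids the detour through $PSp_n(F)$ and the quotient description---at the cost of invoking Proposition~\ref{Prop:connected}, whereas the paper's approach effectively re-proves that connectedness through the reflection structure.
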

\proof
Let us show (1).
If we let $K_0:=PSp_n(\Oc)$ and define $\D$ in $K(=PGSp_n(\Oc))$ as the image of $\{t_\lambda \mid \lambda \in \Oc^\times\}$,
then since $K$ contains $K_0$ and $\D$, and
$\Omega$ contains $K\{a, a^{-1}\}K$, the set $\Omega \cdot \Omega$ contains $\bigcup_{i=1}^{n+1}K_0 \x_i K_0$ as well as $K$ (and thus $K_0$ and $\D$).
The group $K_0$ acts on the set of apartments containing $o=[\L_0]$ transitively,
and this implies that
$\bigcup_{i=1}^{n+1}K_0 \x_i K_0$ generates $PSp_n(F)$ as a semigroup, which follows by looking at the induced action of reflections on apartments as in Proposition \ref{Prop:connected}.
Since the quotient $PGSp_n(F)$ modulo $PSp_n(F)$ is generated by the images of $\{a, a^{-1}\}$ and $\D$ (cf.\ Section \ref{Sec:symplectic_group}),
we conclude that $\Omega$ generates $PGSp_n(F)$ as a semigroup.

Let us show (2). The first claim follows since $\Omega=K\Omega_0 K$ and the definition of $\m$ shows that $\m$ yields the uniform distribution on $K \backslash \Omega/K$.
Concerning the second claim, in the fundamental apartment $\SS_0$
we note that $\x_i a o =t_\varpi o$ if $i \neq 1$ and $\x_1 a o=s_1 t_\varpi o$,
and $(\x_i a)^{-1} o=t_\varpi^{-1}o$ if $i \neq n+1$ and $(\x_{n+1} a)^{-1}o=s_\ast t_\varpi o$ where $s_\ast$ is a product of $s_1, s_2, \dots, s_n$ with some repetitions; we note that such an element $s_\ast$ fixes $o$ since it belongs to the spherical Weyl group.
Furthermore $K_0(=PSp_n(\Oc))$ acts on the set of apartments containing $o$ and if we apply $k$ whose distribution is the normalized Haar measure on $K(=PGSp_n(\Oc))$ to an incidence vertex $v$ of $o$, then $k v$ is uniformly distributed on the incident vertices of $o$.
This implies the claim.
\qed

For simplicity of notation, let $G:=PGSp_n(F)$ in the following discussion.
Recall that $\Ver(\Sc_n)=Go$. 
We define the Hilbert space 
\[
\ell^2(\Sc_n):=\Bigg\{\f: \Ver(\Sc_n) \to \C \mid \sum_{v \in \Ver(\Sc_n)}|\f(v)|^2 < \infty\Bigg\},
\]
equipped with the inner product
\[
\lbr \f, \psi\rbr:=\sum_{v \in \Ver(\Sc_n)}\f(v)\wbar{\psi(v)}, \quad \text{for $\f, \psi \in \ell^2(\Sc_n)$}.
\]
Let us define an operator
$\Ac_\m: \ell^2(\Sc_n) \to \ell^2(\Sc_n)$ 
by
\[
\Ac_\m \f(\x o)=\int_{G}\f(\x \g o)d\m(\g) \qquad \text{for $\x \in G$}.
\]
Note that $\Ac_\m$ is well-defined by the definition of $\m$ since $\Ver(\Sc_n)=G o$ and $K$ is the stabilizer of $o$.
Lemma \ref{Lem:mu} (2) shows that $\Ac_\m$ is the normalized adjacency operator on $\Sc_n$.
Since $\check \m=\m$ by \eqref{Eq:check},
the operator $\Ac_\m$ is self-adjoint on $\ell^2(\Sc_n)$.
Similarly if we define $\Ac_{\m^{\ast t}}: \ell^2(\Sc_n) \to \ell^2(\Sc_n)$ for any positive integer $t \ge 1$,
\[
\Ac_{\m^{\ast t}}\f(\x o)=\int_{G}\f(\x \g o)d\m^{\ast t}(\g) \qquad \text{for $\x \in G$},
\]
then we have that by induction
\[
\Ac_{\m}^t=\Ac_{\m^{\ast t}} \quad \text{for all positive integer $t \ge 1$}.
\]

Let us consider any closed subgroup $\G$ of $G$ such that $\G$ acts on $\Sc_n$ from left with a compact quotient space $\G \backslash \Sc_n$, where the action is given by
\[
(\g, v) \mapsto \g v \quad \text{for $\g \in \G$ and $v \in \Sc_n$}.
\]
Since $\G$ acts on $\Sc_n$ by simplicial automorphisms (as $PGSp_n(F)$ does),
the quotient $\G \backslash \Sc_n$ naturally admits a finite (unoriented) graph structure induced from $\Sc_n$.
Let us denote the finite graph by the same symbol $\G \backslash \Sc_n$.
Note that since $\Sc_n$ is connected by Proposition \ref{Prop:connected},
the graph $\G \backslash \Sc_n$ is connected for any such $\G$.
Here, however we do not assume that $\G$ is torsion-free,
thus the graph $\G \backslash \Sc_n$ may have loops and not regular.
Although $\Sc_n$ admits a bipartite graph structure,
$\G \backslash \Sc_n$ is not necessarily bipartite unless $\G$ factors through $PSp_n(F)$.

For each $v \in \Sc_n$, 
let 
\[
\G_v:=\{\g \in \G \mid \g v=v\}.
\]
Note that $\G_v$ is finite; indeed, if $v=\x o$ for $\x \in G$,
then $\x^{-1}\G_v\x$ is in $K$.
Since $\G$ is a discrete subgroup of $G$ and $K$ is compact, $\G_v$ is a finite group.
Since $\G_{\g v}=\g\G_{v}\g^{-1}$ for $\g \in \G$ and $v \in \Sc_n$, whence $|\G_v|$ is independent of the choice of representatives for $v \in \G \backslash \Sc_n$.
Similarly, for $v, w \in \Sc_n$ such that $v$ and $w$ are adjacent in $\Sc_n$,
we define 
\[
\G_{v, w}:=\G_v\cap \G_w.
\]
Considering the diagonal action of $\G$ on $\Sc_n \times \Sc_n$,
we note that $|\G_{v, w}|$ is independent of the choice of representatives for $[v, w]$ in $\G \backslash (\Sc_n\times \Sc_n)$.
Let us define $\ell^2(\G \backslash \Sc_n)$ the space of complex-valued functions on $\G \backslash \Sc_n$ equipped with the inner product defined by
\[
\langle \f, \p\rangle:=\sum_{v \in \G \backslash \Sc_n} \f(v)\wbar{\p(v)}\frac{1}{|\G_{v}|} \quad \text{for $\f, \p \in \ell^2(\G \backslash \Sc_n)$}.
\]
The group $\G$ acts on $\ell^2(\Sc_n)$ by $\f \mapsto \f\circ \g^{-1}$ for $\g \in \G$ and $\f \in \ell^2(\Sc_n)$, and
since this $\G$-action and $\Ac_\m$ on $\ell^2(\Sc_n)$ commute,
the following operator $\Ac_{\G, \m}$ on $\ell^2(\G \backslash \Sc_n)$ is well-defined:
\[
\Ac_{\G, \m}\f(\G \x o)=\int_{G}\f(\G \x \g o)d\m(\g) \qquad \text{for $\G \x o \in \G\backslash \Sc_n$ and $\f \in \ell^2(\G\backslash \Sc_n)$}.
\]
Note that since $\Ac_\m$ defines the simple random walk on $\Sc_n$, i.e., at each step the random walk jumps to a nearest neighbor vertex with equal probability $1/D$ (where $D$ is the degree of $\Sc_n$), the operator $\Ac_{\G, \m}$ defines a random walk (a Markov chain) on $\G \setminus \Sc_n$ with the transition probability
\[
p(\G \xi o, \G \xi \g o)=\sum_{[v, w]}\frac{|\G_v|}{D |\G_{v, w}|},
\]
where the summation runs over all those $[v, w] \in \G \backslash (\Sc_n\times \Sc_n)$ such that $v$ and $w$ are adjacent, $\G v=\G \xi o$ and $\G w =\G \xi \g o$, and we set the probability $0$ if there is no such pair $[v, w]$.
(We recall that $D=N_g(\ell)$ if $F=\Q_\ell$ and $n=g$.)
Since $|\G_{v, w}|=|\G_{w, v}|$ for all $[v, w]$ in $\G \backslash (\Sc_n \times \Sc_n)$,
we have that
\[
\frac{1}{|\G_v|}p(v, w)=\frac{1}{|\G_w|}p(w, v) \qquad \text{for $v, w \in \G \backslash \Sc_n$}, 
\]
the associated random walk on $\G \backslash \Sc_n$ is reversible with respect to the measure $1/|\G_v|$ for each vertex $v$. 
This implies that $\Ac_{\G, \m}$ is self-adjoint, i.e.,
\begin{equation*}
\langle \Ac_{\G, \m}\f, \p\rangle=\langle \f, \Ac_{\G, \m}\p\rangle \qquad \text{for $\f, \p \in \ell^2(\G \backslash \Sc_n)$}.
\end{equation*}
Moreover, $\Ac_{\G, \m^{\ast t}}$ is defined 
by
\[
\Ac_{\G, \m^{\ast t}}\f(\G \x o)=\int_{G}\f(\G \x \g o)d\m^{\ast t}(\g) \qquad \text{for $\G \x o \in \G \backslash \Sc_n$ and $\f \in \ell^2(\G \backslash \Sc_n)$},
\]
and $\Ac_{\G, \m}^t=\Ac_{\G, \m^{\ast t}}$ holds for any positive integer $t \ge 1$.

\subsection{Spectral gap}
We normalize the Haar measure on $G$ in such a way that $K$ has the unit mass.
Let $L^2(\G \backslash G)$ denote the complex $L^2$-space with respect to the (right) Haar measure for which each double coset $\G \x K$ has the mass $1/|\x^{-1}\G \x \cap K|$.
Note that the mass coincides with $1/|\G_{\x o}|$ since $\G_{\x o}=\G \cap \x K \x^{-1}$.
We consider $L^2(\G \backslash G)^K$ the subspace of $K$-fixed vectors in $L^2(\G \backslash G)$ and naturally identify it with
$\ell^2(\G \backslash \Sc_n)$ (including the inner product).
Let us define the unitary representation $\pi$ of $G$ 
on $L^2(\G \backslash G)$ by
\[
\pi(\g)\f(\G \x)=\f(\G \x\g) \quad \text{for $\f \in L^2(\G \backslash G)$ and $\x, \g \in G$}.
\]
Note that $\f \in L^2(\G \backslash G)^K$ if and only if $\pi(k)\f=\f$ for all $k \in K$.

Let
\[
T_\G(\g) \f(\G \x):=\int_K\f(\G \x k \g)\,d\n(k) \quad \text{for $\f \in L^2(\G \backslash G)$ and $\g \in G$},
\]
where we recall that $\n$ is the normalized Haar measure on $K$.

\begin{lem}\label{Lem:Hpi}
For every $n \ge 1$, and for all $\f \in L^2(\G \backslash G)^K$, we have that
\begin{equation*}
\Ac_{\G, \m}\f=\frac{1}{2(n+2)}\sum_{\g \in \Omega_0}T_\G(\g)\f.
\end{equation*}
Moreover, for all $\g \in \G$ and for all $\f_1, \f_2 \in L^2(\G\backslash G)^K$, we have that
\[
\langle T_\G(\g)\f_1, \f_2\rangle=\langle \pi(\g)\f_1, \f_2\rangle.
\]
\end{lem}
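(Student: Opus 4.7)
The plan is to verify both identities by direct computation, invoking the convolutional structure $\m=\n\ast\Unif_{\Omega_0}\ast\n$ for the first identity and combining $K$-invariance of vectors in $L^2(\G\backslash G)^K$ with unitarity of $\pi$ for the second. Throughout, I use the identification between $\ell^2(\G\backslash\Sc_n)$ and $L^2(\G\backslash G)^K$ via $\f(\G\x o)=\f(\G\x)$, which is legitimate because $K$ is the stabilizer of $o$ in $G$.

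For the first claim, I would begin from $\Ac_{\G,\m}\f(\G\x o)=\int_G\f(\G\x\g)\,d\m(\g)$ and expand $\m$ as an iterated integral over $K\times\Omega_0\times K$. Since $\f$ is right-$K$-invariant as a function on $G$, the innermost integration $\int_K\f(\G\x k_1\g_0 k_2)\,d\n(k_2)$ collapses to $\f(\G\x k_1\g_0)$. What remains is the average in $k_1\in K$ of $\f(\G\x k_1\g_0)$ with $\g_0$ uniform on $\Omega_0$, and by the very definition of $T_\G(\g)$ this equals $\frac{1}{2(n+2)}\sum_{\g\in\Omega_0}T_\G(\g)\f(\G\x)$, which is the first identity.

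For the second claim, I would rewrite
\[
T_\G(\g)\f_1=\int_K\pi(k\g)\f_1\,d\n(k)=\int_K\pi(k)\pi(\g)\f_1\,d\n(k),
\]
using that $\pi$ is a homomorphism under the right-translation convention. Pairing with $\f_2$ and exchanging the integral with the inner product, unitarity of $\pi(k)$ turns the integrand into $\lbr\pi(\g)\f_1,\pi(k^{-1})\f_2\rbr$; since $\f_2$ is $K$-fixed this is the constant $\lbr\pi(\g)\f_1,\f_2\rbr$, and integration against the probability measure $\n$ delivers the asserted equality. Neither step is a real obstacle; the only point requiring genuine care is that the inner products on $\ell^2(\G\backslash\Sc_n)$ and $L^2(\G\backslash G)^K$ actually agree under the identification, which is built into the chosen Haar normalization assigning mass $1/|\x^{-1}\G\x\cap K|=1/|\G_{\x o}|$ to each double coset $\G\x K$, and the legitimacy of swapping $\int_K d\n$ with the inner product (immediate since $K$ is compact and all integrands are bounded).
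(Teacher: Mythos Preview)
Your proposal is correct and follows essentially the same route as the paper. For the first identity you expand $\m=\n\ast\Unif_{\Omega_0}\ast\n$ and use right-$K$-invariance of $\f$ to drop the $k_2$-integral, exactly as the paper does; for the second identity your use of unitarity of $\pi(k)$ to move it across the pairing is just the operator-theoretic rephrasing of the paper's change of variables $\G\x\mapsto\G\x k$ via right-invariance of the Haar measure on $\G\backslash G$, so the two arguments are equivalent.
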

\proof
First let us show the first claim.
Recalling that $\m=\n\ast \Unif_{\Omega_0}\ast \n$,
for $\f \in L^2(\G \backslash G)^K$ and $\x, \g \in G$, we have that 
\begin{align*}
\Ac_{\G, \m}\f(\G \x)	&=\int_G \f(\G \x\g)\,d\m(\g)\\	
									&=\int_{K \times \Omega_0 \times K}\f(\G \x k_1 \g k_2)\,d\n(k_1)d\Unif_{\Omega_0}(\g)d\n(k_2)\\
									&=\frac{1}{2(n+2)}\sum_{\g \in \Omega_0}\int_K \f(\G \x k\g)\,d\n(k)
									=\frac{1}{2(n+2)}\sum_{\g \in \Omega_0}T_\G(\g)\f(\G \x),
\end{align*}
where the third equality follows since $\f$ is a $K$-fixed vector and the last identity follows from the definition of $T_\G(\g)$.
Hence the first claim holds.

Next let us show the second claim.
If we denote the right-invariant Haar measure on $\G\backslash G$ by $m_{\G \backslash G}$,
then
\begin{align*}
\langle T_\G(\g)\f_1, \f_2\rangle	&=\int_{\G \backslash G}\(\int_K\f_1(\G \x k\g)\,d\n(k)\)\wbar{\f_2(\G \x)}\,dm_{\G \backslash G}(\x)\\
													&=\int_K \int_{\G \backslash G}\f_1(\G \x k\g)\wbar{\f_2(\G \x)}\,dm_{\G \backslash G}(\x)\,d\n(k)\\
													&=\int_K \int_{\G \backslash G}\f_1(\G \x\g)\wbar{\f_2(\G \x)}\,dm_{\G \backslash G}(\x)\,d\n(k)\\
													&=\int_{\G \backslash G}\f_1(\G \x\g)\wbar{\f_2(\G \x)}\,dm_{\G \backslash G}(\x)
													=\langle \pi(\g)\f_1, \f_2\rangle,
\end{align*}
where the second equality follows by the Fubini theorem and the third equality holds under the change of variables $\G \x \mapsto \G \x k$ since $m_{\G \backslash G}$ is right-invariant, $\f_2$ is a $K$-fixed vector, and $\n$ is normalized so that $\n(K)=1$.
We conclude the second claim.
\qed

Let us define 
\[
\ell_0^2(\G \backslash \Sc_n):=\Bigg\{\f \in \ell^2(\G \backslash \Sc_n) \mid \sum_{\G v \in \G \backslash \Sc_n}\f(\G v)\frac{1}{|\G_v|}=0\Bigg\},
\]
i.e., $\ell_0^2(\G \backslash \Sc_n)$ is the orthogonal complement to the space of constant functions in $\ell^2(\G \backslash \Sc_n)$.
Note that $\Ac_{\G, \m}$ acts on $\ell_0^2(\G \backslash \Sc_n)$ since the operator is self-adjoint.
 
Given the right representation $(\pi, L^2(\G \backslash G))$, 
letting $L_0^2(\G \backslash G)$ be the orthogonal complement to constant functions in $L^2(\G \backslash G)$,
we define $(\pi_0, L_0^2(\G \backslash G))$ by restricting $\pi$ to $L_0^2(\G \backslash G)$.
The space $\ell_0^2(\G \backslash \Sc_n)$ is identified with the space of $K$-fixed vectors in $L_0^2(\G \backslash G)$ under the identification between $\ell^2(\G \backslash \Sc_n)$ and $L^2(\G \backslash G)^K$.
It is crucial that $\pi_0$ has no non-zero invariant vector.

\begin{prop}\label{Prop:kappa}
For every $n \ge 1$, let $\G$ be a closed subgroup of $G=PGSp_n(F)$ such that $\G \backslash \Sc_n$ is finite.
For all $\f  \in \ell_0^2(\G\backslash \Sc_n)$ with $\|\f\|=1$,
we have that
\begin{equation*}
\langle (I-\Ac_{\G, \m})\f, \f\rangle \ge \frac{1}{4(n+2)}\k(G, \Omega)^2,
\end{equation*}
where $\k(G, \Omega)$ is the optimal Kazhdan constant for the pair $(G, \Omega)$.
\end{prop}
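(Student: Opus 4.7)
The plan is to expand $\langle (I-\Ac_{\G,\m})\f,\f\rangle$ using the averaging formula in Lemma~\ref{Lem:Hpi}, convert matrix coefficients of $\pi$ into displacement norms $\|\pi(\g)\f-\f\|$, and then invoke the definition of the Kazhdan constant for $(G,\Omega)$ on the representation $(\pi_0,L_0^2(\G\backslash G))$.

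First I would write, for a unit vector $\f\in\ell_0^2(\G\backslash\Sc_n)\subset L_0^2(\G\backslash G)^K$, the identity
\[
\langle (I-\Ac_{\G,\m})\f,\f\rangle
=1-\frac{1}{2(n+2)}\sum_{\g\in\Omega_0}\langle T_\G(\g)\f,\f\rangle
=\frac{1}{2(n+2)}\sum_{\g\in\Omega_0}\bigl(1-\langle\pi(\g)\f,\f\rangle\bigr),
\]
using Lemma~\ref{Lem:Hpi} for both equalities (the first for $\Ac_{\G,\m}$ itself, the second to replace $T_\G(\g)$ by $\pi(\g)$ when paired against the $K$-fixed vector $\f$). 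Since $\Omega_0$ is symmetric under $\g\mapsto\g^{-1}$ and $\pi$ is unitary, the sum is real and equals its real part, so the elementary identity $1-\mathrm{Re}\langle\pi(\g)\f,\f\rangle=\tfrac{1}{2}\|\pi(\g)\f-\f\|^2$ (valid for unit $\f$) gives
\[
\langle (I-\Ac_{\G,\m})\f,\f\rangle
=\frac{1}{4(n+2)}\sum_{\g\in\Omega_0}\|\pi(\g)\f-\f\|^2
\ \ge\ \frac{1}{4(n+2)}\max_{\g\in\Omega_0}\|\pi(\g)\f-\f\|^2.
\]

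Next I would upgrade the maximum over $\Omega_0$ to a maximum over $\Omega=K\Omega_0 K$. For any $s=k_1\g k_2$ with $k_1,k_2\in K$ and $\g\in\Omega_0$, the $K$-invariance of $\f$ and the unitarity of $\pi$ give $\pi(k_2)\f=\f$ and $\|\pi(k_1\g)\f-\f\|=\|\pi(\g)\f-\pi(k_1^{-1})\f\|=\|\pi(\g)\f-\f\|$. Hence
\[
\max_{\g\in\Omega_0}\|\pi(\g)\f-\f\|=\max_{s\in\Omega}\|\pi(s)\f-\f\|.
\]

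Finally I would apply the definition of the Kazhdan constant to the restriction $\pi_0$ of $\pi$ to $L_0^2(\G\backslash G)$. Because $\G\backslash\Sc_n$ is finite, the measure on $\G\backslash G$ used in Section~\ref{Sec:rwop} is finite, so constant functions lie in $L^2(\G\backslash G)$; the orthogonal complement $L_0^2(\G\backslash G)$ therefore carries a unitary representation without non-zero $G$-invariant vectors. By the defining infimum of $\k(G,\Omega)$,
\[
\max_{s\in\Omega}\|\pi(s)\f-\f\|\ \ge\ \k(G,\Omega),
\]
and combining this with the previous display yields the claimed lower bound. The entire argument is a direct computation; the only conceptual point to verify carefully is the passage from $\Omega_0$ to $\Omega$ via the bi-$K$-invariance under $\f$, which is the step where the restriction to $K$-fixed vectors is essential, and the observation that $L_0^2(\G\backslash G)$ has no non-zero invariant vectors, for which finiteness of $\G\backslash\Sc_n$ is used.
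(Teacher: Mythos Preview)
Your proof is correct and follows essentially the same route as the paper's: expand via Lemma~\ref{Lem:Hpi}, use the symmetry $\Omega_0=\Omega_0^{-1}$ together with unitarity to rewrite the sum as $\frac{1}{4(n+2)}\sum_{\g\in\Omega_0}\|\pi(\g)\f-\f\|^2$, bound by the maximum, pass from $\Omega_0$ to $\Omega=K\Omega_0 K$ using that $\f$ is $K$-fixed, and invoke the definition of $\k(G,\Omega)$ on $(\pi_0,L_0^2(\G\backslash G))$. The only cosmetic difference is that the paper writes out the expansion of $\|\f-\pi_0(\g)\f\|^2$ and pairs $\g$ with $\g^{-1}$ in the sum, whereas you phrase the same step as taking real parts; the content is identical.
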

\proof
For $\f \in \ell_0^2(\G\backslash \Sc_n)$, 
it follows that
\begin{align*}
\langle (I-\Ac_{\G, \m})\f, \f\rangle
&=\langle \f, \f\rangle-\frac{1}{2(n+2)}\sum_{\g \in \Omega_0}\langle T_\G(\g)\f, \f\rangle \\
&=\langle \f, \f \rangle -\frac{1}{2(n+2)}\sum_{\g \in \Omega_0}\langle \pi(\g)\f, \f\rangle
=\frac{1}{4(n+2)}\sum_{\g \in \Omega_0}\|\f-\pi_0(\g)\f\|^2,
\end{align*}
where identifying $\f$ with a $K$-fixed vector, we have used Lemma \ref{Lem:Hpi} in the first and second lines, and the last equality follows since $\pi_0$ is the restriction of $\pi$ and
\[
\|\f-\pi_0(\g)\f\|^2=\langle \f, \f\rangle-\langle \pi_0(\g)\f, \f\rangle-\langle \pi_0(\g^{-1})\f, \f \rangle+\langle \pi_0(\g)\f, \pi_0(\g)\f\rangle,
\]
and $\pi_0(\g)$ is unitary, and furthermore $\g \in \Omega_0$ if and only if $\g^{-1} \in \Omega_0$.
Moreover, we have that
\begin{align*}
\sum_{\g \in \Omega_0}\|\f-\pi_0(\g)\f\|^2 \ge \max_{\g \in \Omega_0}\|\f-\pi_0(\g)\f\|^2
=\max_{\g \in \Omega}\|\f-\pi_0(\g)\f\|^2,
\end{align*}
which follows from the first claim of Lemma \ref{Lem:mu} (2) and since $\f$ is a $K$-fixed vector and $\pi_0$ is a unitary representation.
Therefore we obtain
\[
\langle (I-\Ac_{\G, \m})\f, \f\rangle \ge \frac{1}{4(n+2)}\max_{\g \in \Omega}\|\f-\pi_0(\g)\f\|^2.
\]
Since $\pi_0$ has no non-zero invariant vector,
we conclude the claim.
\qed

\begin{thm}\label{thm:gap}
If we fix an integer $n \ge 2$, then there exists a positive constant $c_n>0$
such that for any closed subgroup $\G$ in $PGSp_n(F)$ with finite quotient $\G \backslash \Sc_n$,
we have
\[
\lambda_2(\D_{\G,\mu}) \ge c_n,
\]
where $\D_{\G, \mu}=I-\Ac_{\G, \mu}$.
\end{thm}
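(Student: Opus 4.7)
The plan is to combine the lower bound from Proposition \ref{Prop:kappa} with Property (T) of $PGSp_n(F)$ for $n \ge 2$. The key point is that Proposition \ref{Prop:kappa} already reduces the problem to showing that the optimal Kazhdan constant $\kappa(G, \Omega)$ is strictly positive, where $G = PGSp_n(F)$ and $\Omega = K\Omega_0 K$ is the compact support of the step distribution $\mu$.

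First, I would invoke the variational characterization of $\lambda_2$. Since $\Delta_{\Gamma, \mu} = I - \mathcal{A}_{\Gamma, \mu}$ is self-adjoint on $\ell^2(\Gamma \backslash \mathcal{S}_n)$ with the simple eigenvalue $0$ corresponding to constant functions (the graph $\Gamma \backslash \mathcal{S}_n$ is connected by Proposition \ref{Prop:connected}), the second smallest eigenvalue is
\[
\lambda_2(\Delta_{\Gamma, \mu}) = \inf \bigl\{ \langle (I - \mathcal{A}_{\Gamma, \mu})\phi, \phi\rangle : \phi \in \ell_0^2(\Gamma\backslash \mathcal{S}_n),\ \|\phi\|=1 \bigr\}.
\]
By Proposition \ref{Prop:kappa}, this infimum is bounded below by $\frac{1}{4(n+2)}\kappa(G, \Omega)^2$, and crucially this bound is independent of the choice of $\Gamma$.

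Next, I would verify that $\kappa(G, \Omega) > 0$ for $n \ge 2$. Two ingredients from the text combine here: (i) by Section \ref{Sec:PropertyT}, the group $G = PGSp_n(F)$ has Property (T) for every $n \ge 2$; (ii) by Lemma \ref{Lem:mu} (1), the set $\Omega$ is a generating subsemigroup of $G$, and by construction $\Omega = K\Omega_0 K$ is compact since $K = PGSp_n(\mathcal{O})$ is compact and $\Omega_0$ is finite. Applying the standard fact \cite[Proposition 1.3.2]{BdlHV} that any compact generating set of a group with Property (T) has strictly positive optimal Kazhdan constant yields $\kappa(G, \Omega) > 0$.

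Setting
\[
c_n := \frac{1}{4(n+2)}\,\kappa\bigl(PGSp_n(F), \Omega\bigr)^2 > 0,
\]
completes the proof, with $c_n$ depending only on $n$ (and on the fixed $F$) and not on $\Gamma$. There is no serious obstacle in the argument: the heavy lifting has already been done, namely the construction of $\Omega$ as a compact \emph{generating} set inside $G$ (Lemma \ref{Lem:mu} (1)) together with the identification of $K$-fixed vectors in $L^2(\Gamma\backslash G)$ with $\ell^2(\Gamma\backslash \mathcal{S}_n)$ so that the Hecke-type operators $T_\Gamma(\gamma)$ match the random walk operator (Lemma \ref{Lem:Hpi}); the only subtlety worth emphasizing is that Property (T) must be invoked for the projectivized similitude group $PGSp_n(F)$ rather than $GSp_n(F)$ (which does not have Property (T)), and this is legitimate precisely because $\mathcal{S}_n$ carries a well-defined $PGSp_n(F)$-action.
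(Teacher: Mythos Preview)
Your proposal is correct and follows essentially the same argument as the paper: invoke the variational formula for $\lambda_2$, apply Proposition~\ref{Prop:kappa} to get $\lambda_2(\Delta_{\Gamma,\mu}) \ge \frac{1}{4(n+2)}\kappa(G,\Omega)^2$, and then use Property~(T) of $PGSp_n(F)$ for $n\ge 2$ together with Lemma~\ref{Lem:mu}~(1) to conclude $\kappa(G,\Omega)>0$, setting $c_n:=\frac{1}{4(n+2)}\kappa(G,\Omega)^2$. Your additional remarks on connectivity and on why one must work with $PGSp_n(F)$ rather than $GSp_n(F)$ are accurate elaborations but not extra steps in the argument.
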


\proof
Since we have that
\[
\lambda_2(\D_{\G, \mu})=\inf_{\f \in \ell^2_0(\G \backslash \Sc_n), \ \|\f\|=1}\langle (I-\Ac_{\G, \m})\f, \f\rangle,
\]
Proposition \ref{Prop:kappa} implies that
\[
\lambda_2(\D_{\G, \mu}) \ge \frac{1}{4(n+2)}\k(G, \Omega)^2.
\]
Furthermore, $\k(G, \Omega)>0$ since $G=PGSp_n(F)$ has Property (T) if $n \ge 2$ and $\Omega$ is a compact generating set of $G$ by Lemma \ref{Lem:mu} (1) (cf.\ Section \ref{Sec:PropertyT}).
Letting 
\begin{equation}\label{Eq:cn}
c_n:=\frac{1}{4(n+2)}\k(G, \Omega)^2,
\end{equation}
we obtain the claim.
\qed

The proof of Theorem \ref{main} now follows from Theorem \ref{thm:gap} with $\G$ applied to $G_g(\Z[1/\ell])$ modulo the center and Corollary \ref{lwm}.

\subsection{An explicit lower bound for the spectral gap}\label{Sec:Oh}
Appealing to the results by Oh \cite{Oh},
we obtain explicit lower bounds for the second smallest eigenvalues of Laplacians on the graphs $\Gc^{SS}_{g}(\ell,p)$ for $g \ge 2$.
\begin{cor}\label{Cor:Oh}
For every integer $g \ge 2$, for all primes $\ell$ and $p$ with $p \neq \ell$,
\[
\lambda_2\(\Gc^{SS}_{g}(\ell,p)\) \ge \frac{1}{4(g+2)}\(\frac{\ell-1}{2(\ell-1)+3\sqrt{2\ell (\ell+1)}}\)^2.
\]
\end{cor}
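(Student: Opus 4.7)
The plan is to combine the general spectral gap result Theorem \ref{thm:gap} with the explicit Kazhdan constant for $PGSp_g(\Q_\ell)$ due to Oh \cite{Oh}. By Corollary \ref{lwm}, the normalized Laplacian of $\Gc^{SS}_{g}(\ell,p)$ coincides with that of the quotient graph $\G\backslash \Sc_g$, where $\G$ is the image of $G_g(\Z[1/\ell])$ in $PGSp_g(\Q_\ell)$. Since $\G\backslash \Sc_g$ is finite (by the finiteness of $SS_g(p)$ and Theorem \ref{JZ}), Theorem \ref{thm:gap} applied with $n=g$ and $F=\Q_\ell$ already yields
\[
\lambda_2\(\Gc^{SS}_{g}(\ell,p)\) \ge \frac{1}{4(g+2)}\,\k\bigl(PGSp_g(\Q_\ell),\Omega\bigr)^{2},
\]
where $\Omega = K\Omega_0 K$ is the compact symmetric generating set built in Section \ref{Sec:rwop}. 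So the whole task reduces to inserting an explicit, $p$-independent lower bound for $\k\bigl(PGSp_g(\Q_\ell),\Omega\bigr)$ into the right-hand side.

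Such a bound is exactly what Oh's work provides: for a simple, split, semisimple group over a non-archimedean local field of residual cardinality $\ell$, Oh computes explicit Kazhdan constants with respect to compact generating sets arising from the standard Iwahori and hyperspecial data. The quantity
\[
\frac{\ell-1}{2(\ell-1)+3\sqrt{2\ell(\ell+1)}}
\]
appearing on the right is precisely of the shape produced by Oh for classical groups over $\Q_\ell$, with $\ell+1$ reflecting the order of projective lines over the residue field that parametrize the relevant orbit of unit vectors. I would cite the corresponding theorem in \cite{Oh}, verify that its hypotheses apply to $G=PGSp_g(\Q_\ell)$ (namely that $G$ is simple, split, of rank $g\ge 2$, and has Property (T) as recalled in Section \ref{Sec:PropertyT}), and extract the stated constant for its preferred generating set $\Omega'$.

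The remaining step is the comparison of Kazhdan constants $\k(G,\Omega) \ge \k(G,\Omega')$. By the monotonicity of the optimal Kazhdan constant in the generating set, it suffices to show that $\Omega'$ can be taken to lie within $\Omega$ (or a fixed bounded power thereof), which follows from the description of $\Omega_0=\{\xi_i a,(\xi_i a)^{-1}:i=0,\dots,g+1\}$: the elements $\xi_i$ project onto the full set of generators $s_1,\dots,s_{g+1}$ of the affine Weyl group of type $\wt C_g$, while $a=[t_\varpi]$ supplies the generator of $PGSp_g(\Q_\ell)/PSp_n(\Q_\ell)\cdot K$ modulo $\Oc^\times$, so that together with $K\subset \Omega\cdot\Omega$ they recover any fixed reference generating set (this is essentially the content of Lemma \ref{Lem:mu}(1)).

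The main obstacle I anticipate is the bookkeeping in this last comparison: one must keep track of how many times $\Omega$ has to be composed with itself to dominate Oh's generators, since each such composition squares the Kazhdan constant in the derived inequality, and a naive comparison would degrade the displayed constant. A careful choice, using that each $\xi_i a$ already lies inside $K\cdot\{\xi_i\}\cdot K\cdot a \cdot K$ with a single occurrence of $a$, allows the comparison to be performed without loss, so that Oh's bound inserts verbatim. Substitution into the inequality above then gives the stated explicit estimate and finishes the proof.
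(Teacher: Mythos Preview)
Your overall plan---feed Theorem \ref{thm:gap} with an explicit lower bound for $\k(PGSp_g(\Q_\ell),\Omega)$ coming from Oh---is exactly the paper's strategy, and the reduction via Corollary \ref{lwm} is correct. The gap is in the last paragraph, where you assert that the comparison with Oh's generating set can be made ``without loss''. It cannot, and the displayed constant already encodes the loss.

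Concretely: Oh's Theorem 8.4 is stated for the simply connected group $Sp_g(\Q_\ell)$ with the generating set $\Omega_\ast=\{Sp_g(\Z_\ell),\,s\}$, $s=\diag(\ell^{-1},\dots,\ell^{-1};\ell,\dots,\ell)$, and yields
\[
\k\bigl(Sp_g(\Q_\ell),\Omega_\ast\bigr)\ \ge\ \frac{\sqrt{2}\,(\ell-1)}{\sqrt{2}\,(\ell-1)+3\sqrt{\ell(\ell+1)}}.
\]
To reach this from $\k(PGSp_g(\Q_\ell),\Omega)$ you must (i) restrict to the subgroup $PSp_g(\Q_\ell)$ and (ii) produce $s$ from $\Omega$. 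But the element $a=[t_\ell]\in\Omega_0$ has similitude $\ell$ and does \emph{not} lie in $PSp_g(\Q_\ell)$; only $a^2$ does, and modulo the center $a^2$ is precisely $s$. Thus $\Omega_\ast\subset\Omega^2\cap PSp_g(\Q_\ell)$, not $\Omega$, and the paper uses the triangle-inequality step $\k(G,\Omega)\ge\tfrac12\,\k(G,\Omega^2)$ before restricting. Dividing Oh's raw constant by $2$ is exactly what produces the $2(\ell-1)+3\sqrt{2\ell(\ell+1)}$ in the denominator of the stated bound. So your claim that ``Oh's bound inserts verbatim'' would yield a strictly stronger inequality than the corollary asserts, and the argument you sketch for avoiding the factor does not work: a single occurrence of $a$ never lands inside the semisimple part to which Oh's estimate applies.
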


\proof
We keep the notations in the preceding subsections and put $n=g$.
Let $F:=\Q_\ell$.
Note that $\Omega^2$ contains $K$ and $a^2$.
The definition of the optimal Kazhdan constant shows that
\[
\k\(G, \Omega^2\) \ge \k\(PSp_n(\Q_\ell), \Omega^2\cap PSp_n(\Q_\ell)\).
\]
Furthermore the right hand side is at least 
$\k\(Sp_n(\Q_\ell), \Omega_\ast\)$,
where
\[
\Omega_\ast:=\{Sp_n(\Z_\ell), s\} \quad \text{and} \quad s:=\diag(\ell^{-1}, \dots, \ell^{-1}; \ell, \dots, \ell).
\]
Applying \cite[Theorem 8.4]{Oh} to $Sp_n(\Q_\ell)$ for $n\ge 2$ with a maximal strongly orthogonal system ${\rm L}$ in the case of $C_n$ $(n \ge 2)$ \cite[Appendix]{Oh},
we have that
\[
\k\(Sp_n(\Q_\ell), \Omega_\ast\) \ge \chi_{\rm L}(s)=\frac{\sqrt{2(1-\xi_{\rm L}(s))}}{\sqrt{2(1-\xi_{\rm L}(s))}+3},
\]
where
\[
\xi_{\rm L}(s) \le \frac{2(\ell-1)+(\ell+1)}{\ell(\ell+1)}=\frac{3\ell-1}{\ell(\ell+1)}.
\]
Hence we have for all $n \ge 2$ and all prime $\ell$,
\[
\k\(Sp_n(\Q_\ell), \Omega_\ast\) \ge \frac{\sqrt{2}(\ell-1)}{\sqrt{2}(\ell-1)+3\sqrt{\ell(\ell+1)}},
\]
and since $\k(G, \Omega) \ge (1/2)\k(G, \Omega^2)$,
we obtain
\[
\k(G, \Omega) \ge \frac{\ell-1}{2(\ell-1)+3\sqrt{2\ell (\ell+1)}}.
\]
Combining the above inequality with \eqref{Eq:cn} in the proof of Theorem \ref{thm:gap}, we conclude that for all $n \ge 2$ and all prime $\ell$,
\[
\lambda_2(\D_{\G, \m}) \ge \frac{1}{4(n+2)}\(\frac{\ell-1}{2(\ell-1)+3\sqrt{2\ell (\ell+1)}}\)^2.
\]
Applying to the case when $\G$ is $G_g(\Z[1/\ell])$ modulo the center together with Corollary \ref{lwm} yields the claim.
\qed

\section{Some remarks on Algebraic modular forms for $GUSp_g$}\label{GUSp}
In this section, we study algebraic modular forms on 
$G_g(\A_\Q)=GUSp_g(\A_\Q)$ which can be also regarded as functions on $SS_g(p)$. 
When $g=1$, Pizer applied the Jacquet-Langlands correspondence to study 
${\rm Gr}_1(\ell,p)$ \cite{Pizer-graph} and 
he showed such graphs are Ramanujan. 

However, for $g\ge 2$, 
the Jacquet-Langlands correspondence between $G_g$ and $GSp_g$ is not 
still fully understood well though in the case when $g=2$, there are several important works which have recently come out (see \cite{Hoften},\cite{RW}). 

It seems morally possible to classify algebraic modular forms on $G_g$ by using 
the trace formula approach as in \cite{RW} and relate them to Siegel 
modular forms on $GSp_g$ though we need to prove the transfer theorem for 
Hecke operators with respect to the principal genus. 
Then Arthur's endoscopic classification (cf. \cite{Arthur1},\cite{Arthur2}) for $GSp_g$ which is not still established except for $g\le 2$ 
would be used to obtain desired results for $\mathcal{G}^{SS}_{2}(\ell, p)$.  
From this picture, it would be easy for experts in the theory of automorphic representations 
to guess the upper bounds of Satake-parameters at $\ell$ for Hecke eigen algebraic modular forms and Hecke eigen Siegel modular forms as well. 
It should be remarked that there are some classes of Hecke eigen Siegel modular forms 
which does not satisfy Ramanujan conjecture. They are so called CAP forms 
(cf. Section 3.9 of  \cite{Gan}). 
However, such forms are expected to be negligible among all forms when $p$ goes to infinity 
and this is in fact true for Siegel modular forms on $GSp_g$ (see \cite{KWY},\cite{KWY1}). 

In fact, the third author showed that in fact, it is also true for $M(K)$ 
when $g=2$ \cite{Yamauchi}. 
With this background from the theory of automorphic representations, in this section,  
we propose a conjecture that 
$\mathcal{G}^{SS}_{g}(\ell, p)$ is asymptotically relatively Ramanujan when $p$ goes to infinity (see Definition \ref{ARR}). 
We also give a conjecture related to Conjecture 1 of \cite{FS2} in our setting.

Henceforth, we use the index $n$ to stand for $G_n$ instead of the index $g$ of 
$G_g$ to avoid 
the confusion in which we use $g$ as an element of the groups.

\subsection{Gross's definition}\label{GrossDef}
We refer Chapter II of \cite{Gross} for the notation and basic facts. 
Recall the notation in Subsection \ref{CN}.  
Put $K=K(\O^n)$. Since $B$ is definite, $G_n(\R)=GUSp_n(\R)$ is compact modulo its center. It follows from  (\ref{desc1}) that 
\begin{equation}\label{strong-approx} 
G_n(\Q)\bs G_n(\A_\Q)/(K\times G_n(\R)^+)=G_n(\Q)\bs G_n(\A_f)/K
\end{equation}
where $G_n(\R)^+$ stands for the connected component of the identity element 
and the cardinality of (\ref{strong-approx}) is nothing but the class number $H_n(p,1)$ of the principal genus. 
According to Chapter II-4 of \cite{Gross}, we define 
the $\C$-vector space $M(K)$ consisting of all locally constant functions $f:G_n(\A_\Q)\lra \C$ such that 
$$f(\gamma g k g_\infty)=f(g),\ g\in G_n(\A_\Q)$$
for all $\gamma\in G_n(\Q),\ k\in K$, and $g_\infty\in G_n(\R)^+$. 
Put $h:=H_n(p,1)$ and pick $\{\gamma_i\}_{i=1}^h$ with $\gamma_i\in G_n(\A_f)$ a complete system of the 
representatives of (\ref{strong-approx}). 
By definition, the space $M(K)$ is generated by 
the characteristic functions $\varphi_i,\ 1\le i\le h$ of 
$G_n(\Q)\gamma_i K$. Hence we have 
$M(K)\simeq \C^{\oplus h}$. 
We define a hermitian inner product $( \ast,\ast )_K$ on $M(K)$ by 
\begin{equation}\label{pairing-alg}( f_1,f_2 )_K:=\sum_{\gamma\in G_n(\Q)\bs G_n(\A_f)/K}
f_1(\gamma)\overline{f_2(\gamma)}\frac{1}{|{\rm Aut}(\gamma)|}
\end{equation}
for $f_1,f_2\in M(K)$ where ${\rm Aut}(\gamma):=
(G_n(\Q)\cap \gamma  K \gamma^{-1})Z(\A_f)/Z(\A_f)$.
Let $\varphi$ be a non-zero constant function on $G_n(\A_\Q)$.  
We denote by $M_0(K)$ the orthogonal complement of $\C\varphi$ in $M(K)$. 
Clearly, ${\rm dim}(M_0(K))=h-1=H_n(p,1)-1$. 
\begin{Def}\label{AMF}\upshape{
Each element of $M(K)$ is said to be an algebraic modular form on $G_n(\A_\Q)=GUSp_n(\A_\Q)$ of weight zero with level $K$. }
\end{Def}
For each prime $\ell\neq p$ we define the (unramified) Hecke algebra 
$$\mathcal{H}_\ell=\C[G_n(\Z_\ell)\bs G_n(\Q_\ell)/G_n(\Z_\ell)]\simeq 
\C[GSp_n(\Z_\ell)\bs GSp_n(\Q_\ell)/GSp_n(\Z_\ell)]$$ at $\ell$ 
which is generated by the characteristic functions of form 
$G_n(\Z_\ell)gG_n(\Z_\ell)$ for $g\in G_n(\Q_\ell)$. 
Let $e_\ell$ be the characteristic function of $G_n(\Z_\ell)$ which is the identity element of 
$\mathcal{H}_\ell$. 
Let $\mathbb{T}^{(p)}=\otimes'_{\ell\neq p}\mathcal{H}_\ell$ be the 
restricted tensor product of $\{\mathcal{H}_\ell\}_{\ell\neq p}$ with respect to the 
identity elements $\{e_\ell\}_{\ell\neq p}$. 
We call $\mathbb{T}^{(p)}$ the Hecke ring outside $p$ and 
it is well-known that $\mathbb{T}^{(p)}$ acts on $M(K)$ and also on $M_0(K)$ (cf. Section 6 of \cite{Gross}). 
\begin{Def}\label{HEAMF}\upshape{
Each element of $M(K)$ is said to be a Hecke eigenform outside $p$ if 
it is a simultaneous eigenform for all elements in $\mathbb{T}^{(p)}$. }
\end{Def}
By using the Hermitian paring (\ref{pairing-alg}), 
we can check that there exists an orthonormal basis $HE(K)$ of $M_0(K)$ which 
consists of Hecke eigenforms outside $p$. 
For each non-zero $F$ in $HE(K)$ and an element $T\in \mathbb{T}^{(p)}$, 
we denote by $\lambda_F(T)$ the eigenvalue of $F$ for $T$. 
Since $F$ has the trivial central character, $\lambda_T(F)$ is a real number. 
Recall the Hecke operator $T(\ell)$ in Section \ref{hecke-at-ell}. 
By definition $T(\ell)$ is the characteristic function of 
$G_n(\Z_\ell)t_\ell G_n(\Z_\ell)$ where 
$t_\ell=\diag(\overbrace{1,\ldots,1}^{n},\overbrace{\ell,\ldots,\ell}^{n})$. 

As explained at the beginning of this section, under the background of the theory of automorphic representations, 
there are CAP forms in $HE(K)$ which do not satisfy Ramanujan conjecture but 
they are expected to be negligible among all forms when $p$ goes to infinity. 
In this vein, we propose the following:
\begin{conj}\label{asymp-rel-ramanujan}
Put $d_{n,p}:={\rm dim}M_0(K)=|HE(K)|=H_n(p,1)-1$. 
For each $\ell\neq p$, it holds that
$$\limsup_{p\to \infty}\frac{1}{d_{n,p}}\sum_{F\in HE(K)}|\lambda_F(T(\ell))|
\le 2^n \ell^{\frac{n(n+1)}{4}}.$$
\end{conj}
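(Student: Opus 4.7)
The plan is to deduce the conjecture from an expected Jacquet--Langlands transfer from $G_n = GUSp_n$ to $GSp_n$, combined with Arthur's endoscopic classification for $GSp_n$ and a density estimate showing that CAP forms are negligible as $p\to\infty$.

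First, I would establish a Jacquet--Langlands-type transfer of algebraic modular forms on $G_n$ (which is compact modulo centre at infinity) to automorphic representations of $GSp_n(\A_\Q)$. Concretely, for each $F \in HE(K)$ one should produce a cuspidal automorphic representation $\pi_F$ of $GSp_n(\A_\Q)$ such that, under the identification $G_n(\Q_\ell) \simeq GSp_n(\Q_\ell)$ at primes $\ell \neq p$, the local components $\pi_{F,\ell}$ are the unramified representations whose Satake parameters match those of $F$. It follows that $\lambda_F(T(\ell))$ coincides with the $T(\ell)$-eigenvalue of $\pi_{F,\ell}$. For $n=2$ this transfer is accessible via the trace formula comparisons of R\"osner--Weissauer \cite{RW} and van Hoften \cite{Hoften}; for general $n$ one needs the (presently unestablished) Jacquet--Langlands correspondence between $GUSp_n$ and $GSp_n$, together with its compatibility with the principal-genus Hecke operators.

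Next, I would apply Arthur's endoscopic classification for $GSp_n$ to split $HE(K)$ according to the associated global $A$-parameter. Forms of general type have tempered local components at all unramified finite places by the Ramanujan conjecture in Arthur's formalism. For such an unramified tempered $\pi_{F,\ell}$ with trivial central character, the Satake parameter is a semisimple class in the dual group whose eigenvalues on the standard representation of $\mathrm{Spin}_{2n+1}(\C)$ lie on the unit circle; via the Satake isomorphism one then sees that $|\lambda_F(T(\ell))|$ is bounded by $\ell^{n(n+1)/4}$ times the absolute value of the trace on the $2^n$-dimensional spin representation, yielding the target bound $2^n \ell^{n(n+1)/4}$. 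The remaining forms are CAP, whose Arthur parameters factor through proper Levi subgroups. For $n=2$, \cite{Yamauchi} shows that the number of CAP forms in $HE(K)$ is $o(d_{n,p})$ as $p \to \infty$, extending the analogous result of \cite{KWY, KWY1} for Siegel modular forms on $GSp_g$; for general $n$ the analogous density-zero statement should follow from dimension formulae via the Arthur--Selberg trace formula on $GUSp_n$, where CAP contributions come from lower-rank endoscopic groups whose parameters grow strictly slower in $p$ than the main term.

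Combining the tempered bound on the overwhelming majority of forms with the negligible CAP contribution, the limsup of the average is bounded by the Ramanujan constant $2^n \ell^{n(n+1)/4}$. The main obstacle is that, for $n\ge 3$, both the Jacquet--Langlands transfer from $GUSp_n$ to $GSp_n$ and Arthur's endoscopic classification for $GSp_n$ remain unproven, so the argument is presently conditional on both. The natural first target is $n=2$, where all required automorphic inputs are either already available or within reach, and it is plausible that an unconditional proof can be assembled from \cite{Hoften}, \cite{RW}, \cite{Yamauchi} with careful bookkeeping of the Satake parameters and the numerical constant $2^n \ell^{n(n+1)/4}$.
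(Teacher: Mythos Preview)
The statement you are addressing is labeled a \emph{Conjecture} in the paper, and the paper gives no proof of it. What the paper does provide is a paragraph of heuristic motivation immediately preceding the conjecture: a Jacquet--Langlands-type transfer from $G_n=GUSp_n$ to $GSp_n$ (citing \cite{RW}, \cite{Hoften} for $n=2$), Arthur's endoscopic classification for $GSp_n$ (unproven for $n\ge 3$), the Ramanujan bound $2^n\ell^{n(n+1)/4}$ for tempered forms, and the expectation that CAP forms are asymptotically negligible (citing \cite{KWY}, \cite{KWY1}, \cite{Yamauchi}). Your proposal reproduces exactly this heuristic and, appropriately, flags that it is conditional on two major unproven inputs for $n\ge 3$.

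So there is nothing to compare: your outline \emph{is} the paper's motivation, not a proof, and the paper does not claim one. Your assessment that the $n=2$ case may be within reach is also the paper's own assessment. One minor point: your derivation of the constant $2^n\ell^{n(n+1)/4}$ via the spin representation of the dual group is plausible but would need to be checked carefully against the normalization of $T(\ell)$ used here; the paper simply cites Section 19 of \cite{vdGeer} and \cite{Setyadi} for this bound rather than deriving it.
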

The bound is nothing but the Ramanujan bound for $T(\ell)$ for 
Siegel cusp forms on $GSp_n$ whose automorphic representations 
are tempered at $\ell$ (see Section 19 of \cite{vdGeer}). 
It also coincides with the spectral radius of the special $1$-complex $\Sc_n$ (see Proposition 2.6 of 
\cite{Setyadi}).

\subsection{A speculation for bounds of eigenvalues of $T(\ell)$} \label{eigenvalue}
Let us consider the case when $n=2$. Then we have three types of 
CAP forms in $M(K)$ which are given by 
\begin{enumerate}
\item cuspidal forms associated to Borel subgroup;
\item cuspidal forms associated to Klingen parabolic subgroup;
\item cuspidal forms associated to Siegel parabolic subgroup. 
\end{enumerate}
For the third case, historically, they are also called Saito-Kurokawa lifts \cite{Gan}. 
Any form in $M(K)$ has the trivial central character and this shows the first case 
occurs only for the constant function. The second case also can not occur 
since such a form has a non-trivial central character. We remark that the eigenvalue of the constant function for $T(\ell)$ is 
$\ell^3+\ell^2+\ell+1=N_2(\ell)$.  

For the third case, the eigenvalue $\lambda_{F_{\tiny{{\rm Siegel}}}}(T(\ell))$ for each 
cuspidal form $F_{\tiny{{\rm Siegel}}}$ associated to Siegel parabolic subgroup satisfies 
\begin{equation}\label{CAP-Siegel}
\ell^2+1-2\ell\sqrt{\ell}\le \lambda_{F_{\tiny{{\rm Siegel}}}}(T(\ell))\le \ell^2+1+2\ell\sqrt{\ell}.
\end{equation}
As noticed before, $\lambda_F(T(\ell))$ is always a real number for each $F\in M(K)$ 
since $F$ has a trivial central character. 

For each non-CAP form $F$ in $M_0(K)$ we would expect that 
\begin{equation}\label{non-CAP}
|\lambda_F(T(\ell))|\le 4\ell\sqrt{\ell}
\end{equation}
and non-CAP forms are majority of $M_0(K)$. 

It is easy to see that $\ell^2+1+2\ell\sqrt{\ell}$ is the maximum among  the upper bounds of 
(\ref{CAP-Siegel}) and (\ref{non-CAP}) when $\ell\ge 5$.  
Let $1=\mu_1>\mu_2\ge \cdots \ge \mu_m>-1$ be the eigenvalues of the 
random walk matrix (the normalized adjacency matrix) for $\mathcal{G}^{SS}_{2}(\ell, p)$ with 
$m=|SS_2(p)|$ and 
put $\lambda_i=1-\mu_i$.  
\begin{conj}\label{refinement}
Assume $p\ge 5$. For each prime $\ell\neq p$, it holds that 
$$1-\frac{\max\{4\ell\sqrt{\ell},\ell^2+1+2\ell\sqrt{\ell}\}}{N_{2}(\ell)}\le \lambda_i \le 
1+\frac{4\ell\sqrt{\ell}}{N_{2}(\ell)}
$$
In particular, when $\ell=2$, 
$$1-\frac{8 \sqrt{2}}{15}=0.24575...  \le \la_2, \quad 2-\la_m \le 
1+\frac{8 \sqrt{2}}{15}=1.75425....$$
\end{conj}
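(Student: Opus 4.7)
The plan is to translate the spectral problem for $\mathcal{G}^{SS}_{2}(\ell,p)$ into one about eigenvalues of the Hecke operator $T(\ell)$ acting on the space $M(K)$ of algebraic modular forms on $G_2=GUSp_2$ of level $K=K(\Oc^2)$. By Corollary \ref{lwm} and Theorem \ref{Rel-HP}, the random walk matrix on $\mathcal{G}^{SS}_{2}(\ell,p)$ coincides (up to the normalizing factor $1/N_2(\ell)$) with the action of $T(\ell)$ on $\C$-valued functions on $G_2(\Q)\backslash G_2(\A_f)/K$, which via \eqref{strong-approx} is identified with $M(K)$ endowed with the inner product \eqref{pairing-alg}. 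Consequently, the spectrum $\{\mu_i\}_{i=1}^m$ of the random walk matrix is exactly $\{\lambda_F(T(\ell))/N_2(\ell)\}_{F}$ where $F$ runs over a Hecke eigenbasis of $M(K)$; the Perron eigenvalue $\mu_1=1$ arises from the constant function, and all other $\mu_i$ come from $HE(K)\subset M_0(K)$.

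Next I would decompose $M_0(K)$ according to the automorphic type of the associated representation of $G_2(\A_\Q)$. The key ingredient is a Jacquet–Langlands–type transfer from $G_2=GUSp_2$ to $GSp_2$, a version of which is available through the recent work of van Hoften \cite{Hoften} and R\"osner–Weissauer \cite{RW}. Granting this transfer, Arthur's endoscopic classification \cite{Arthur1}, \cite{Arthur2} for $GSp_2$ partitions the cuspidal spectrum contributing to $M_0(K)$ into: (i) one-dimensional and Borel-type CAP, (ii) Klingen-type CAP, (iii) Saito–Kurokawa (Siegel parabolic) CAP, and (iv) tempered cuspidal (general or Yoshida-type endoscopic). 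As noted in Subsection \ref{eigenvalue}, the trivial central character condition on $M(K)$ rules out types (i) (beyond the constant) and (ii), leaving only Saito–Kurokawa lifts and tempered forms.

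For each Saito–Kurokawa form $F_{\mathrm{SK}}$, the local Satake parameters at $\ell$ are determined by a weight $3/2$ modular form, giving $\lambda_{F_{\mathrm{SK}}}(T(\ell)) = \ell^2+1+\ell^{3/2}(\alpha+\alpha^{-1})$ with $|\alpha|=1$ by Deligne's bound; this yields exactly \eqref{CAP-Siegel}. For a tempered Hecke eigenform $F$, the Ramanujan bound on the four Satake parameters $\{\alpha_0,\alpha_0\alpha_1,\alpha_0\alpha_2,\alpha_0\alpha_1\alpha_2\}$ of absolute value $\ell^{3/2}$ forces $|\lambda_F(T(\ell))|\le 4\ell^{3/2}$, which is \eqref{non-CAP}. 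Taking the maximum of these two upper bounds and dividing by $N_2(\ell)$ produces the claimed two-sided bound on $\lambda_i$; in the special case $\ell=2$ the non-CAP term $4\ell\sqrt{\ell}=8\sqrt{2}$ dominates $\ell^2+1+2\ell\sqrt{\ell}=5+4\sqrt{2}$, and $N_2(2)=15$ gives the explicit numbers.

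The main obstacle is the rigorous establishment of the $GUSp_2 \leftrightarrow GSp_2$ Jacquet–Langlands correspondence with the precise matching of Hecke eigenvalues at unramified primes $\ell\ne p$, including the identification of the image of $HE(K)$ inside the automorphic spectrum of $GSp_2$; this is not yet fully available in the literature. A secondary but substantial obstacle is the Ramanujan conjecture for non-CAP cuspidal automorphic representations of $GSp_2$ of non-holomorphic type, which is known for holomorphic Siegel cusp forms of cohomological weight by Weissauer but remains open in the generality needed here. Given these two inputs, the argument above would complete the proof; in their absence, Conjecture \ref{refinement} remains genuinely conjectural, although Theorem \ref{main} together with Corollary \ref{Cor:Oh} already provides an unconditional (but weaker) uniform lower bound on $\lambda_2$.
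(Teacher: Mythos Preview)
The statement you are addressing is a \emph{conjecture}, not a theorem; the paper offers no proof, only the heuristic motivation in Subsection~\ref{eigenvalue}. Your proposal tracks that motivation essentially step for step: the identification of the random walk spectrum with Hecke eigenvalues on $M(K)$ via Theorem~\ref{Rel-HP} and Corollary~\ref{lwm}, the elimination of Borel- and Klingen-type CAP forms by the trivial central character, the Saito--Kurokawa bound \eqref{CAP-Siegel}, and the expected tempered bound \eqref{non-CAP}. Your explicit identification of the two missing ingredients---a Jacquet--Langlands transfer for $GUSp_2\leftrightarrow GSp_2$ matching Hecke eigenvalues at $\ell\neq p$, and the Ramanujan bound for non-CAP cuspidal representations of $GSp_2$ in the required generality---is exactly what the paper signals by labeling the statement a conjecture and pointing to \cite{Hoften}, \cite{RW}, \cite{Yamauchi}. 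One small correction: the Saito--Kurokawa lifts contributing here correspond (via the paper's Remark~\ref{compare}) to newforms in $S_4(\Gamma_0(p))$, not directly to weight-$3/2$ forms; the half-integral weight enters only through the Shimura correspondence. Otherwise your conditional argument and your $\ell=2$ numerics are correct, and you rightly conclude that the statement remains open.
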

\begin{remark}\label{compare}
Comparing with Conjecture 1 of \cite{FS2}, 
let
$$\lambda_{\star}(\mathcal{G}^{SS}_{2}(\ell, p))=
\min\{\la_2,2-\la_m\}.$$ In particular, when $\ell=2$, it yields 
$$1-\frac{8 \sqrt{2}}{15}=0.24575...  \le \lambda_{\star}(\mathcal{G}^{SS}_{2}(2, p)) \le 
1+\frac{8 \sqrt{2}}{15}=1.75425....$$
Further, we would be able to check the lower and upper bounds would be sharp  by using 
the classification of Saito-Kurokawa forms due to Gan \cite{Gan} and 
equidistribution for Satake parameters of newforms in $S_4(\Gamma_0(p))$ when 
$p$ goes to infinity. Here $S_4(\Gamma_0(p))$ stands for the space of 
elliptic cusp forms of weight $4$ with respect to $\Gamma_0(p)\subset SL_2(\Z)$. 
The assumption $p\ge 5$ in Conjecture \ref{refinement} is used to guarantee $S_4(\Gamma_0(p))\neq \{0\}$. 
\end{remark}

\subsection{Not being Ramanujan is not necessary fared}
As is expected naturally for experts in the theory of automorphic representations, 
the eigenvalues of the adjacency matrix for $\mathcal{G}^{SS}_{g}(\ell, p)$ 
do not satisfy the Ramanujan bound when $g\ge 2$. 
However, in view of the theory of automorphic forms, it is plausible because of 
the existence of CAP forms violating Ramanujan property. 
Even one can prove, in fact, that it happens for 
$\mathcal{G}^{SS}_{2}(\ell, p)$ for 
each $\ell\ge 17$ and $p\ge 5$ by using the results in \cite{Gan}. 
In Section 10.1 of \cite{JZ}, they gave an example satisfying the Ramanujan bound but this is just 
possible only for small $\ell$ (less than 13 to be precise). 
These things would happen similarly for general $g$. 
Nonetheless, $\mathcal{G}^{SS}_{g}(\ell, p)$  has a nice property as Theorem \ref{main} 
speaks out.  

Therefore, a more conceptual, finer notation should be introduced to 
measure how good  a family of regular graphs is.  
We here propose the following.
Let $\{X_i\}_{i\in I}$ be a family of $d$-regular graphs indexed by 
an ordered set $I$ such that $\ds\lim_{i\to \infty}|X_i|=\infty$. 
Let $\lambda_{{\rm mean}}(X_i)$ be the average of the absolute values of all 
eigenvalues of the normalized adjacency matrix for $X_i$. 
Suppose there exist a prime $\ell$ and a reductive algebraic group $G$ over $\Z$  
such that for each $i\in I$, 
$X_i=\Gamma_i\backslash G(\Q_\ell)/Z_G(\Q_\ell)G(\Z_\ell)$ for some 
lattice $\Gamma_i$ in $G(\Q_\ell)$ for each $i\in I$. Here $Z_G$ is the center of $G$. 
\begin{Def}\label{ARR}
We say $\{X_i\}_{i\in I}$ is asymptotically relatively Ramanujan if 
$$\limsup_{i\to\infty}\lambda_{{\rm mean}}(X_i)\le 
\rho(\mathcal{B}^1(G^{{\rm der}}))$$
where $\mathcal{B}^1(G^{{\rm der}})$ is 
a subgraph of the 1-skelton of the building $\mathcal{B}(G)$ for  
$G(\Q_\ell)/Z_G(\Q_\ell)G(\Z_\ell)$ such that $G(\Q_\ell)$ acts transitively on 
$\mathcal{B}^1(G^{{\rm der}})$ and $\rho(\mathcal{B}^1(G^{{\rm der}}))$ stands for 
the spectral radius of the graph.  
\end{Def}
Our graph $\mathcal{G}^{SS}_{g}(\ell, p)$ is related to $G=GSp_g$ with  $G^{{\rm der}}=Sp_g$ and its  
spectral radius is computed in Proposition 2.6 of \cite{Setyadi} as already mentioned.

\bibliographystyle{alpha}

\end{document}